 \numberwithin{equation}{section}
\def\bp{\begin{proof}}
\def\ep{\end{proof}}
 \definecolor{db}{rgb}{0.0,0.0,0.8} 
\definecolor{dg}{rgb}{0.0,0.55,0.14}
\definecolor{dr}{rgb}{0.5,0,0.07}
 \def\D{{\mathbb D}}
\newcommand{\hwthm}{\vskip 0.3cm
     \noindent\textbf{Loose Theorem.${}\ {}$}
    }
\newtheorem{theorem}{Theorem}[section]
\newtheorem{proposition}[theorem]{Proposition}
\newtheorem{lemma}[theorem]{Lemma}
\newtheorem{corollary}[theorem]{Corollary}
\theoremstyle{definition}
\theoremstyle{definition}
\theoremstyle{definition}
\theoremstyle{definition}
\theoremstyle{definition}
\theoremstyle{definition}
\newtheorem{remark}[theorem]{Remark}
\theoremstyle{definition}
\newcounter{step}
\def\be{\begin{equation}}
\def\ee{\end{equation}}
\def\bes{\begin{equation*}}
\def\ees{\end{equation*}}
\def\bt{\begin{theorem}}
\def\et{\end{theorem}}
\def\bpr{\begin{proposition}}
\def\epr{\end{proposition}}
\def\bl{\begin{lemma}}
\def\el{\end{lemma}}
\def\bc{\begin{corollary}}
\def\ec{\end{corollary}}
\def\br{\begin{remark}}
\def\er{\end{remark}}
\def\ben{\begin{enumerate}}
\def\bena{\begin{enumerate}[a)]}
\def\een{\end{enumerate}}
\def\bit{\begin{itemize}}
\def\iit{\end{itemize}}
\def\det{\operatorname{det}}
\def\deg{\operatorname{deg}}
\def\div{\operatorname{div}}
\def\Re{\operatorname{Re}}
\def\Im{\operatorname{Im}}
\def\ind{\operatorname{ind}}
\def\tr{\operatorname{tr}\ }
\DeclareMathAlphabet{\mathonebb}{U}{bbold}{m}{n}
\def\R{{\mathbb R}}
\def\N{{\mathbb N}}
\def\C{{\mathbb C}}
\def\Z{{\mathbb Z}}
\def\S{{\mathbb S}}
\def\fo{\forall\, }
\def\va{\varphi}
\def\d{\displaystyle}
\def\im{\imath}
\def\ve{\varepsilon}
\def\p{\partial}
\def\l{\label}
\def\O{\Omega}
\def\po{\partial\Omega}
\def\na{\nabla}
\def\so{{\mathbb S}^1}
\date{October 28, 2013}
\title{Existence of critical points with semi-stiff boundary conditions for singular perturbation problems in  simply connected planar domains}
\author{Xavier Lamy \thanks{Universit\'e de Lyon,  CNRS UMR 5208, Universit\'e Lyon 1, Institut Camille Jordan, 43 blvd. du 11 novembre 1918, F-69622 Villeurbanne cedex, France. Email address: xlamy$@$math.univ-lyon1.fr}       \and
Petru Mironescu    \thanks{Universit\'e de Lyon,  CNRS UMR 5208, Universit\'e Lyon 1, Institut Camille Jordan, 43 blvd. du 11 novembre 1918, F-69622 Villeurbanne cedex, France. Email address: mironescu$@$math.univ-lyon1.fr}  
}
\begin{document}

\maketitle
 \begin{abstract}
 Let $\O$ be a smooth bounded simply connected domain in $\R^2$. We investigate the existence of critical points of the energy $E_\ve(u)=1/2\int_\O |\nabla u|^2+1/(4\ve^2)\int_\O (1-|u|^2)^2$, where  the complex map $u$ has modulus one and prescribed degree $d$  on the boundary. Under suitable nondegeneracy assumptions on $\O$, we prove existence of critical points for small $\ve$. More can be said when the prescribed degree equals one. First, we obtain existence of critical points  in domains close to a disc. Next, we prove that critical points exist in \enquote{most} of the domains.
 \end{abstract}
%
%
%

\section{Introduction}
\label{sin}

Let $\O\subset\R^2$ be a smooth bounded simply connected domain. Let a map $u$ belong to the space
\[\mathcal{E}:=\left\lbrace u\in H^1(\O,\C);\, |\tr u|=1\right\rbrace,\]
where $\tr u$ denotes the trace of $u$ on the boundary $\po$. Then the trace $\tr u$ of $u$ on $\po$ belongs to the space $H^{1/2}(\po;\so)$, and therefore we can define its winding number or degree, which we denote by $\deg(u,\po)$ (see \cite[Appendix]{boutetgeorgescupurice}; see also \cite[Section 2]{bmrs} for more details). This allows us to define the class
\[\mathcal{E}_d=\left\lbrace u\in H^1(\O;\C);\,|\tr u|=1,\,\deg(u,\po)=d\right\rbrace.\]
In this paper we study the existence of critical points of the Ginzburg-Landau energy functional
\begin{equation*}
E_\ve(u)=\frac{1}{2}\int_\O |\nabla u|^2+\frac{1}{4\ve^2}\int_\O (1-|u|^2)^2
\end{equation*} 
in the space $\mathcal{E}_d$,  i.e., of critical points with prescribed degree $d$. More specifically, we are interested in non trivial critical points, that is critical points which are not constants of modulus one.

The  prescribed degree boundary  condition is an  intermediate model between the  Dirichlet and the Neumann boundary conditions. The asymptotic of minimizers of the Ginzburg-Landau energy $E_\ve$ with Dirichlet boundary condition was first studied by Bethuel, Brezis and H\'elein in their classical work \cite{bbh}. In particular, it was shown in \cite{bbh} that minimizers $u_\ve$ have zeros \enquote{well-inside} $\O$, and that these zeros approach the singularities (vortices) of the limit $u_\ast$ of the $u_\ve$'s as $\ve\to 0$. In contrast, the only minimizers of $E_\ve$ with no boundary condition are constants. The same holds even for stable critical points of $E_\ve$ with Neumann boundary conditions \cite{serf_stab}. The analysis of the prescribed degrees boundary condition (in domains which may be multiply  connected) leads to a richer global picture \cite{ber_voss},  \cite{gbcvpde}, \cite{bmjfa}, \cite{bgrnonexistence},  \cite{brjems}, \cite{d_s}, \cite{bmrs}. More specifically, in multiply connected domains minimizers of $E_\ve$  may exist  \cite{gbcvpde}, \cite{bmjfa} or not \cite{bgrnonexistence}. However, in such domains critical points of $E_\ve$  always exist \cite{brjems}, \cite{d_s}. In  simply connected domains, minimizers never exist \cite{bmjfa}. More involved is the study of the existence of critical points in simply connected domains; this is our purpose. Typical methods in absence of absolute minimizers consist in constructing local minimizers, or in constructing critical points by minimax methods. Construction of local minimizers proved to be successful in multiply connected domains \cite{brjems}, but the arguments there do not adapt to our case. Minimax techniques led in \cite{bmrs} to the proof of the existence of critical points in simply connected domains for {\it large} $\ve$, but again these techniques do not seem to work for {\it small} $\ve$. 

The present paper is devoted to the existence of critical points for {\it small} $\ve$ and thus complements \cite{bmrs}. Our approach relies on singular perturbations techniques, in the spirit of Pacard and Rivi\`ere \cite{pacardriviere}.
We explain this approach in the special case   where the prescribed degree is $d=1$.  We first recall the main result in \cite{bbh}. Consider the minimization of $E_\ve$ with Dirichlet boundary condition:
\bes
\min\{ E_\ve(u);\, \tr u=g\text{ on }\p\O\}.
\ees
Here, $g:\p\O\to\so$ is smooth, and we  assume that   $\deg (g, \p\O)=1$. Then there exists some $a\in\O$ such that, possibly up to a subsequence, minimizers $u_\ve$ satisfy $u_\ve\to u_*$, with 
\be
\l{eaa1}
u_*(z)=u_{\ast, a, g}(z)=\d\frac {z-a}{|z-a|}e^{\im H},\ \text{with }H=H_{a,g}\text{ harmonic}. 
\ee
In \eqref{eaa1}, the function $H$ is  uniquely determined (mod $2\pi$) by the condition
\be
\l{eaa2}
u_*=g\text{ on $\p\O$}.
\ee 
The point $a$ is not arbitrary: it has to be a critical point (actually, a point of minimum) of the \enquote{renormalized energy} $W(\cdot, g)$ associated with $g$.

 In order to explain our main results in the case of prescribed degree boundary condition, we perform a handwaving  analysis of our problem when $d=1$. Assume that $u_\ve$ is a critical point of $E_\ve$ in $\mathcal{E}_1$. Then $u_\ve$ has to vanish at some point $a_\ve$, and up to a subsequence we have either 
\ben[(i)]
\item
$a_\ve\to a\in\O$

or
\item
$a_\ve\to a\in\p\O$.
\een
Assume that (i) holds. Assume further, for the purpose of our discussion, that $a_\ve$ is the only zero of $u_\ve$. Then the analysis in \cite{bbh} suggests that the limit $u_*$ of the $u_\ve$'s should be again  of the form $u_*(z)=\d\frac {z-a}{|z-a|}e^{\im\psi}$. Formally, the fact that $u_\ve$ is a critical point of $E_\ve$ leads, as in \cite{bbh}, to the conclusion that   the limiting point $a$ is a critical point of a suitable renormalized energy $\widehat W(\cdot)$. Some basic properties of the energy $\widehat W$ are studied in \cite{lefterradulescu}; we will come back to this in Section \ref{sre}. Of interest to us is the fact that $\widehat W$ is smooth and does have critical points.  

Let $a$ be a critical point of $\widehat W$, and let $u_\ast$ be as in \eqref{eaa1}-\eqref{eaa2}. We plan to construct critical points $u_\ve$ of $E_\ve$ in ${\cal E}_1$ such that $u_\ve\to u_\ast$ as $\ve\to 0$. 
The construction goes as follows: we first construct, for a \enquote{generic} boundary datum $g:\po\to\so$, critical points of $E_\ve$ with Dirichlet boundary condition $g$. We next prove that, for some appropriate $g=g_\ve$, the corresponding critical point $u_\ve$ is actually a critical point of $E_\ve$ with prescribed degree $1$. Construction of critical points of $E_\ve$ with Dirichlet boundary conditions was already considered in the literature. Such construction can be performed by either variational methods \cite{linlin} (see also \cite{delpinofelmer}), or  inverse functions methods \cite{pacardriviere} (see also \cite{delpinomussokowalczyk}). Our approach is inspired by the one of Pacard and Rivi\`ere \cite{pacardriviere}. In \cite{pacardriviere}, critical points of $E_\ve$ with Dirichlet boundary condition $g$ are constructed under a  nondegeneracy assumption for the corresponding renormalized energy $W(\cdot, g)$. We encounter a similar situation in our problem: we are able to construct critical points of $E_\ve$ under some nondegeneracy assumptions that we  explain below.

To start with,   we will see in Section \ref{sre} that we may associate with each point $a\in\O$ a natural boundary datum $g^a$, solution of the minimization problem
\bes
\min\{ W(a, g);\,  g : \po\to\so,\ \deg (g, \po)=1\}.
\ees
It turns out that, if $a$ is a critical point of $\widehat W$, then $a$ is also a critical point of $W(\cdot, g^a)$ (Section \ref{sre}). Since $\widehat W$ has a global maximum (Section \ref{gen}), $\widehat W$ has critical points, and thus there exists some $a\in\O$ critical point of $W(\cdot, g^a)$. Our first nondegeneracy assumption is 
\bes
(ND1)\ \text{there exists some }a\in\O\text{  nondegenerate critical point of }W(\cdot, g^a).
\ees
Assuming that {\it (ND1)} holds, set $g_0:=g^a$. Then we may prove that, for each $g$ \enquote{close} to $g_0$ in a suitable sense, $W(\cdot, g)$ has a critical point $a(g)$ close to $a$ (Section \ref{snd}). Thus, to such $g\in C^{1,\beta}(\po ; \so)$ we may associate the function 
\bes
T_\ast(g)\in \dot C^\beta(\po ; \R),\ 
T_\ast(g):=\d u_\ast\wedge\frac{\p u_\ast}{\p\nu}, 
\ees
where $u_\ast=u_{\ast, a(g), g}$ is given by \eqref{eaa1}-\eqref{eaa2}. One may prove that the map $g\mapsto T_\ast(g)$ is $C^1$  near $g_0$, and that its differential $L$ at  $g_0$ is a Fredholm operator of index one (Section \ref{sra}). Our second nondegeneracy assumption is
\bes
(ND2)\ L \text{ is onto}.
\ees

We may now  state our first result.  
\bt
\l{tet}
Assume that {\it (ND1)} and {\it (ND2)}  hold.  Then, for small $\ve$, $E_\ve$ has critical points $u_\ve$ with prescribed degree one.
\et
A similar result holds for an arbitrary prescribed degree $d$.

The conditions {\it (ND1)} and {\it (ND2)} seem to be \enquote{generic}.\footnote{Critical points of smooth functionals are \enquote{generically} nondegenerate, and Fredholm operators of index one are \enquote{generically} onto.}
  However, it is not clear whether the assumptions {\it (ND1)} and {\it (ND2)} are ever satisfied. Therefore, our next task is  to exhibit   nondegeneracy situations.
\hwthm
{\it Assume that $d=1$ and that $\O$ is \enquote{close} to a disc. Then {\it (ND1)} and {\it (ND2)} hold. In particular, for small $\ve$, $E_\ve$ has  critical points of prescribed degree $1$.
}

\medskip
\noindent
The above theorem applies to the unit disc $\D$. However, no sophisticated argument is needed for a disc.  Indeed, when $\O=\D$  it is possible to construct explicit  hedgehog type critical points of $E_\ve$ by minimizing $E_\ve$ in the class  of the maps of the form $f(|z|)\d\frac z{|z|}$.

Concerning the existence of critical points of $E_\ve$ in arbitrary domains, we do not know whether {\it (ND1)} and {\it (ND2)} do always hold. However, we have the following result.
\hwthm
{\it
Assume that $d=1$. Then every $\O$ can be approximated with domains satisfying (ND1)-(ND2).
}

\medskip
Our paper contains the proof of the three above theorems, as well as generalizations to higher degrees $d$ and a discussion about the \enquote{generic} nature of our results. The plan of the paper is the following.   In Section \ref{sre}, we recall the definition and the main properties of the renormalized energies corresponding to either Dirichlet or prescribed degree boundary condition, and establish few new properties. In Sections \ref{st} and \ref{sef}, we derive new useful formulas for the renormalized energies. In Section \ref{snd}, we prove that nondegeneracy of critical points of $W(\cdot, g)$ is stable with respect to small perturbations of $g$. Section \ref{prus} is devoted to the proof of a variant of the Pacard-Rivi\`ere \cite{pacardriviere} construction of critical points with Dirichlet condition; this is a key step in our proof. We prove Theorem \ref{tet} (for arbitrary degrees $d$) in Section \ref{scp}. The proof relies on a Leray-Schauder degree argument, and the  corresponding key estimate is obtained in Section \ref{sconv}.  In Section \ref{sno}, we prove that the couple of conditions {\it (ND1)}-{\it (ND2)} is stable with respect to small perturbations of the domain. This and the fact that $\O=\D$ satisfies  {\it (ND1)}-{\it (ND2)} (Section \ref{sra}) implies (a rigorous form of)  the first Loose Theorem. We finally discuss in Section \ref{gen} the \enquote{generic} nature of our results, and establish (a rigorous form of)  the second Loose Theorem.%

\subsection*{Notation}
\ben
\item
Points in $\R^2$ are denoted $z$ in the Sections \ref{st} and \ref{sef} relying on complex analysis techniques, and $x$ or $y$ elsewhere.
\item
$\D(z,r)$, $\overline\D (z,r)$ and $C(z,r)$ denote respectively  the open disc, the closed disc and the circle of center $z$ and radius $r$. We let $\D=\D(0,1)$ denote the unit disc and set $\D_r=\D(0,r)$. $\so$ is the unit circle.
\item
$\wedge$ stands for the vector product of complex numbers or vectors. Examples:
$
(u_1+\im u_2)\wedge (v_1+\im v_2)=u_1v_2-u_2v_1$, $ (u_1+\im u_2)\wedge (\na v_1+\im \na v_2)=u_1\na v_2-u_2\na v_1$, $(\na u_1+\im \na u_2)\wedge (\na v_1+\im \na v_2)=\na u_1\cdot\na v_2-\na u_2\cdot\na v_1$.
\item
If $A$ is a set and $k$ an integer, then we let 
\bes
A_*^k=\{ a=(a_1,\ldots, a_k)\in A^k;\, a_j\neq a_l, \fo j\neq l\}.
\ees
\item
When $k=1$, we identify a collection $a=(a_1)$ with (the  point or number) $a_1$. 
\item
Additional indices emphasize the dependence of objects on variables. E.g.: $\psi_a=\psi_{a,g}$ recalls that $\psi$ depends not only on $a$, but also on $g$.
\een

\tableofcontents

\section{Renormalized energies and canonical maps}
\label{sre}
In the first part of this section, we follow \cite{bbh} and \cite{lefterradulescu}.

We fix $k\in \N$ and a collection $\overline{d}=(d_1,\ldots,d_k)\in\Z^k$,   and we let $d:=d_1+\cdots+d_k$. The bounded domain $\Omega\subset\R^2$ is assumed to be simply connected and $C^{1,\beta}$.

We consider a collection of mutually distinct  points in $\O$, $a=(a_1,\ldots,a_k)\in\O^k_*$ (the prescribed singularities), and also a boundary datum $g\in H^{1/2}(\po;\so)$, of degree $d$. We denote by $\mathcal B_d$ the space of all such boundary data. Thus
\bes
{\mathcal B}_d:=\{ g\in H^{1/2}(\po ; \so) ;\, \deg(g,\po)=d\}.
\ees
For small $\rho>0$, we define the open set $\O_\rho=\O\setminus\bigcup_{j=1}^k \overline\D (a_j,\rho)$, 
and the classes of functions
\begin{gather}
\label{e2.1}
\mathcal{F}_{\rho,g}  =\left\lbrace v\in H^1(\O_\rho;\so);\; \tr v = g, \; \deg(v, C(a_j,\rho))=d_j \right\rbrace,
\\
\label{e2.2}
\widehat{\mathcal{F}}_\rho  = \left\lbrace v\in H^1(\O_\rho;\so);\; \deg(v,\po)=d, \; \deg(v, C(a_j,\rho))=d_j \right\rbrace.
\end{gather}
The functions in these classes have prescribed winding number $d_j$ around each $a_j$, and  prescribed boundary condition $g$ (respectively prescribed degree $d$) on $\po$. Of course, although we do not make this dependence explicit, the above classes  depend not only on $\rho$ and $g$, but also on $a$.

In \cite{bbh} and \cite{lefterradulescu}, minimization of the Dirichlet energy 
$1/2\int |\nabla v|^2$
over these spaces is studied, and  the following asymptotic expansions are obtained as $\rho\to 0$:
\begin{gather}
\label{e2.3}
\inf \left\lbrace\frac{1}{2}\int_{\O_\rho} \left|\nabla v\right|^2;\, v\in{\mathcal{F}}_{\rho,g}\right\rbrace  =\pi \left(\sum_{j=1}^k d_j^2\right)\log \frac{1}{\rho}+W(a,g)+O(\rho),\\
\label{e2.4}
\inf \left\lbrace\frac{1}{2}\int_{\O_\rho} \left|\nabla v\right|^2;\, v\in\widehat{\mathcal{F}}_\rho\right\rbrace =\pi \left(\sum_{j=1}^k d_j^2\right)\log \frac{1}{\rho}+\widehat{W}(a)+O(\rho).
\end{gather}
In the above expressions,  $W(a,g)$ and $\widehat{W}(a)$ are the so-called renormalized energies. These quantities depend not only on $a$ and $g$, but also  on $\overline{d}$ and $\O$. 

Explicit formulae for the above renormalized energies can be found in \cite{bbh} and \cite{lefterradulescu}, and involve the functions   $\Phi_{a,g}$ and $\widehat{\Phi}_a$ defined as follows. $\Phi_{a,g}$ is the unique solution of
\begin{equation}
\label{e2.5}
\begin{cases}
\Delta \Phi_{a,g} =  2\pi\sum_{j=1}^k d_j\delta_{a_j} & \quad \text{in }\O\\
\d\frac{\partial \Phi_{a,g}}{\partial \nu}  =  g \wedge  \frac{\p g}{\p\tau} & \quad \text{on }\po\\
\d\int_{\po} \Phi_{a,g} = 0 & \end{cases},\end{equation}
while $\widehat{\Phi}_a$ is the unique solution of 
\begin{equation}
\label{e2.6}
\begin{cases}
\Delta \widehat{\Phi}_a  =  2\pi\sum_{j=1}^k d_j\delta_{a_j} & \quad \text{in }\O\\
\widehat{\Phi}_a  = 0 & \quad \text{on }\po \end{cases}.\end{equation}
For further use, let us note that, if $\alpha\in\so$, then $\Phi_{a,g}=\Phi_{a,\alpha g}$. Therefore, we may naturally define $\Phi_{a,g}$ when $g$ is an equivalence class in $H^{1/2}(\po ; \so)/\so$.

We also define the regular parts $R_{a,g}$ and $\widehat{R}_a$ of $\Phi_{a,g}$ and $\widehat{\Phi}_a$ as follows:
\be
\label{e2.7}
R_{a,g}(x)  = \Phi_{a,g}(x)-\sum_{j=1}^k d_j \log |x-a_j|,\quad\fo x\in\O,
\ee
respectively
\be
\label{e2.8}
\widehat{R}_a(x)  = \widehat{\Phi}_a(x)-\sum_{j=1}^k d_j \log |x-a_j|, \quad\fo x\in\O.
\ee
The expressions of $W$ and $\widehat{W}$ are
\be
\label{wag}
W(a,g)  = -\pi \sum_{j\neq l} d_jd_l \log |a_j-a_l| + \frac{1}{2}\int_{\po} \Phi_{a,g}\left(g\wedge \frac{\p g}{\p\tau}\right) - \pi \sum_{j=1}^k d_j R_{a,g}(a_j),
\ee
respectively
\be
\label{e2.10}
\widehat{W}(a)  = -\pi \sum_{j\neq l} d_jd_l \log |a_j-a_l| - \pi \sum_{j=1}^k d_j \widehat{R}_a(a_j). 
\ee
The next result was proved in  \cite{lefterradulescu}.
\bpr\label{link}
We have
\begin{equation}
\label{e2.11}
\widehat{W}(a)=\inf\ \{  W(a,g);\, g\in {\cal B}_d\},
\end{equation}
and the infimum is attained in \eqref{e2.11}.
\epr
\noindent
Recall that ${\cal B}_d:=\{ g\in H^{1/2}(\po ; \so) ;\, \deg(g,\po)=d\}$. 

We present here an alternative proof of Proposition \ref{link}, in the course of which we exhibit a formula of the form
\[W(a,g)=\widehat{W}(a)+\text{ non negative terms },\]
which will be useful in the sequel.
\begin{proof}[Proof of Proposition \ref{link}]
We identify a map $\psi\in H^{1/2}(\po;\R)$ with its harmonic extension to $\O$, still denoted $\psi$. 
Given $\psi\in H^{1/2}(\po;\R)$, we define its (normalized) harmonic conjugate $\psi^*\in H^{1/2}(\po;\R)$ as follows. The harmonic extension of $\psi^*$ (still denoted $\psi^\ast$) is the unique solution of
\begin{equation}
\begin{cases}
\psi+\im\psi^*\;\text{is holomorphic in }\O,\\
\d\int_{\po}\psi^* = 0.\end{cases}
\end{equation}

Note that the Cauchy-Riemann equations imply
\begin{equation}
\label{e2.13}
\frac{\partial\psi^*}{\partial\nu}=-\frac{\partial \psi}{\partial\tau}\quad\mathrm{and}\quad\frac{\partial\psi^*}{\partial\tau}=\frac{\partial \psi}{\partial\nu},
\end{equation}
at least when $\psi$ is smooth. When $\psi$ is merely $H^{1/2}$, the distributions $\d\frac {\p\psi}{\p\nu}, \frac {\p\psi^*}{\p\nu}\in H^{-1/2}$ are respectively defined as  the trace on $\po$ of the normal derivatives of $\psi$ and $\psi^*$, and the equalities in \eqref{e2.13} are to be understood as equalities of distributions in $H^{-1/2}$.

We consider the space
\be
\label{e2.15}
H^{1/2}(\po;\R)/\R \simeq \dot{H}^{1/2}(\po;\R) := \left\lbrace \psi\in H^{1/2}(\po;\R);\,\int_{\po}\psi = 0\right\rbrace,
\ee
which is endowed with the natural norm
\be
\label{H1/2}
\left|\psi\right|^2_{H^{1/2}}  = \int_{\O}\left|\nabla\psi\right|^2  
 = \int_{\O}\left|\nabla\psi^*\right|^2   = \int_{\po} \frac{\partial\psi^*}{\partial\nu}\psi^* = -\int_{\po} \frac{\partial\psi}{\partial\tau}\psi^*.
\ee
If $\psi$ not smooth, then the two last integrals are to be understood  as $H^{-1/2}-H^{1/2}$ duality brackets.

Given $a\in \Omega_\ast^k$, we define the canonical boundary datum associated with $a$ as the unique element $g=g^a\in H^{1/2}(\po;\so)/\so$ such that $\deg(g,\po)=d$ and
\be
\label{e2.16}
g^a\wedge \frac{\p g^a}{\p\tau} = \frac{\partial \widehat{\Phi}_a}{\partial\nu}.
\ee 
Our first observation is that $g^a$ is well-defined and smooth. (It would be more accurate to assert that every map in the equivalence class defining $g^a$ is smooth.) Indeed, existence of a smooth $g:\po\to\so$ satisfying 
$\d g\wedge \frac{\p g}{\p\tau}=h$ (with given $h$) is equivalent to $h$ smooth and 
\be
\label{a1}\int_{\po}h=2\pi d. 
\ee
In addition, $g$ (if it exists) is unique modulo $\so$. In our case, we have 
$\d
 h=\frac{\partial \widehat{\Phi}_a}{\partial\nu}$,  
 which is smooth (since  $\widehat{\Phi}_a$ is smooth near $\po$). In addition, using the equation \eqref{e2.6}, we see that \eqref{a1} holds.
If we compare the definition of $g^a$ to the one of $\Phi_{a,g}$, we  see that the canonical datum $g^a$ is the unique $g$ (modulo multiplication by a constant in $\so$) such that 
\be
\label{e2.18}
\widehat{\Phi}_a=\Phi_{a,g}.
\ee
Given $g\in H^{1/2}(\po;\so)$ with $\deg(g,\po)=d$, we have $\deg(g/g^a,\po)=0$. Therefore, we may find $\psi=\psi_{a,g}\in H^{1/2}(\po;\R)$, unique modulo a constant, such that \cite{lss}
\be
\label{e2.19}
g=g^ae^{\im\psi}=g^ae^{\im\psi_{a,g}}.
\ee
Thus we have
\be
\label{e2.20}
\begin{cases}
\d\Delta\left[\Phi_{a,g}-\widehat{\Phi}_a\right]  =  0 &\text{in }\O \\
\d\frac{\partial}{\partial\nu}\left[\Phi_{a,g}-\widehat{\Phi}_a \right]  =  \frac{\p\psi}{\p\tau} &\text{on }\po\\
\d\int_{\po}\left(\Phi_{a,g}-\widehat{\Phi}_a\right)  = 0 &
\end{cases}.
\ee
Combining the above with the definition of the harmonic conjugate, we find that
\be
\label{e2.21}
\Phi_{a,g} = \widehat{\Phi}_a-\psi^*=\widehat{\Phi}_a-\psi_{a,g}^*.
\ee
Plugging \eqref{e2.21} into the expression of $W(a,g)$ given by formula \eqref{wag}, we find
\be
\label{e2.22}
\begin{aligned}
W(a,g)  =& -\pi \sum_{j\neq l } d_jd_l \log |a_j-a_l|  + \frac{1}{2}\int_{\po} \left(\widehat{\Phi}_a-\psi^*\right)\left(g^a\wedge \frac{\p g^a}{\p\tau}+\frac{\p\psi}{\p\tau}\right)\\
& - \pi \sum_{j=1}^k d_j (\widehat{R}_a(a_j)-\psi^*(a_j))\\
= &\widehat{W}(a) -\frac{1}{2}\int_{\po}\psi^*\left(g^a\wedge \frac{\p g^a}{\p\tau}\right)-\frac{1}{2}\int_{\po}\psi^*\frac{\p\psi}{\p\tau} +\pi \sum_{j=1}^k d_j\psi^*(a_j).
\end{aligned}
\ee
In the last equality we used the fact that $\widehat{\Phi}_a=0$ on $\po$. Furthermore, using the definition of $g^a$ and the fact that $\psi^*$ is harmonic, we obtain
\be
\label{e2.23}
\begin{aligned}
\int_{\po} \psi^*\left(g^a\wedge \frac{\p g^a}{\p\tau}\right)  =& \int_{\po} \psi^* \frac{\partial\widehat{\Phi}_a}{\partial\nu}\\
=& 
  \int_{\po}\frac{\partial\psi^*}{\partial\nu}\widehat{\Phi}_a +\int_{\O}\psi^*\Delta\widehat{\Phi}_a 
= 2\pi\sum_{j=1}^k d_j \psi^*(a_j).
\end{aligned}
\ee
Using \eqref{H1/2}, \eqref{e2.22} and \eqref{e2.23}, we finally obtain
\be
\label{e2.24}
W(a,g)=\widehat{W}(a)+\frac{1}{2}\left|\psi_{a,g}\right|^2_{H^{1/2}}.
\ee
In particular, we recover the conclusion of Proposition \ref{link} in the following stronger form: the minimum of $W(a, \cdot)$ is attained (exactly) when $g=g^a$ (modulo $\so$).
\end{proof}

\begin{remark}
The canonical boundary datum $g^a$ will play a crucial role in our subsequent analysis. We emphasize here the fact that $g^a$ is the (unique modulo $\so$) solution of 
\be
\l{aba1}
g^a \wedge \frac{\partial g^a}{\partial \tau} = \frac{\partial\widehat{\Phi}_a}{\partial\nu}\quad\text{on }\partial\Omega.
\ee
\end{remark}

The limit (as $\rho\to 0$) of the variational problem \eqref{e2.3} is also connected to the so-called canonical harmonic map $u_{*,a,g}$ associated to prescribed singularities $a\in\Omega_*^k$ and to the Dirichlet condition $g\in H^{1/2}(\partial\Omega ; \mathbb{S}^1)$. 
In fact,  in \cite[Chapter I]{bbh} it is proved that the unique solution $u_{\rho,g}$ of the minimization problem $\inf \,\left\lbrace \int |\nabla u|^2 \, ; \, u\in\mathcal{F}_{\rho,g} \right\rbrace $ tends to $u_{*,a,g}$,  in $C^k_{loc}(\Omega\setminus \{ a_j \} )$ as $\rho\to 0$.\footnote{Actually, in \cite[Chapter I]{bbh} the map $g$ is supposed smooth, but the argument adapts to a general $g\in H^{1/2}(\po ; \so)$.} 

The canonical harmonic map is defined by the formula
\begin{equation}\label{u*1}
\begin{cases}
\d u=u_{*,a,g}=e^{\im H}\prod_{j=1}^k \left( \frac{z-a_j}{|z-a_j|} \right) ^{d_j} &\text{in }\O\\
\Delta H=0&\text{in }\O\\
u=g&\text{on }\po
\end{cases}.
\end{equation}
The fact that $\deg (g,\partial\Omega)=d=\sum d_j$ guarantees that $H=H_g$ is well defined. Indeed, there exists $\psi\in H^{1/2}(\partial\Omega ; \R)$ such that
\[
g\prod_{j=1}^k \left( \frac{z-a_j}{|z-a_j|} \right) ^{-d_j} = e^{\im \psi},
\]
and then we can simply let $H$ be the harmonic extension of $\psi$. On the other hand, we note that $H$ is uniquely defined up to a multiple of $2\pi$. 

Equivalently, $u$ in \eqref{u*1} is characterized  by \cite[Chapter I]{bbh}
\begin{equation}\label{u*2}
\begin{cases}
|u|=1\\
\d u\wedge \frac{\partial u}{\partial{x_1}}  = -\frac{\partial \Phi_{a,g}}{\partial{x_2}}\\
\d u\wedge \frac{\partial u}{\partial{x_2}} = \frac{\partial \Phi_{a,g}}{\partial{x_1}}\\
u=g\text{ on }\po
\end{cases}.
\end{equation}
In particular, we have
\be
\l{e21121}
u_{\ast, a, g}\wedge\frac{\p u_{\ast, a, g}}{\p\nu}=-\frac{\p\Phi_{a,g}}{\p\tau}\quad\text{on }\po
\ee
and
\be
\l{e21122}
\int_{\po}u_{\ast, a, g}\wedge\frac{\p u_{\ast, a, g}}{\p\nu}=0.
\ee
\begin{remark}
For the minimization problem \eqref{e2.4}, the situation is similar. As established  in \cite{lefterradulescu},
 the solution $v_\rho$ to $\inf\,\left\lbrace \int |\nabla v|^2\, ;\, v\in\widehat{\mathcal F}_\rho\right\rbrace$ converges (in an appropriate sense) as $\rho\to 0$, to $v_{*,a}:=u_{*,a,g^a}$. 
%
 Since $g^a$ is defined modulo $\mathbb{S}^1$, $v_{*,a}$ is also defined modulo $\mathbb{S}^1$. Therefore, in this context  the convergence actually means that subsequences of $(v_\rho)$ converge to representatives (modulo $\mathbb{S}^1$) of $v_{*,a}$. 
\end{remark}

We end this section with  the definition of the following quantity, which will play a very important  role in what follows.  For $a\in\Omega_*^k$ and $g\in H^{1/2}(\partial\Omega ; \mathbb{S}^1)$, we set
\begin{equation}\label{defN(a,g)}
N(a,g):=u_{*,a,g}\wedge \frac{\partial u_{*,a,g}}{\partial\nu} =-\frac{\p\Phi_{a,g}}{\p\tau} \in H^{-1/2}(\partial\Omega ; \R).
\end{equation}

\section{Transport of formulas onto the unit disc}
\label{st}

Let  $f:\mathbb{D}\to\Omega$ be a conformal representation. The assumption $\Omega\in C^{1,\beta}$ ensures that $f$ and its inverse $\varphi:=f^{-1}:\Omega\to\mathbb{D}$ are $C^{1,\beta}$ up to the boundary.

The goal of this section is to understand how the objects defined in Section~\ref{sre} are transported by $\varphi$ and $f$. 

We will stress the dependence on the domain by using superscripts (e.g. $W=W^{\Omega}$). For $\alpha\in\mathbb{D}_*^k$, the notation $a=f(\alpha)$ stands for $a:=(f(\alpha_1),\ldots,f(\alpha_k))\in \Omega_*^k$.

First of all, for $a\in\Omega_*^k$, we have
\begin{equation}
\label{transformphiag}
\Phi_{a,g}^\Omega = \Phi^{\mathbb{D}}_{\varphi(a),g\circ f} \circ \varphi+ C,
\end{equation}
where $C=C(a,g,f)=-\int_{\so}\Phi^{\mathbb{D}}_{\varphi(a),g\circ f}|f'|$. Indeed, \eqref{transformphiag} is justified as follows. By a direct calculation, both sides of \eqref{transformphiag}  satisfy the same Poisson equation, with the same Neumann boundary condition. The constant $C$ comes from the normalization condition $\int_{\partial\Omega}\Phi_{a,g}=0$. The same argument applies to show that
\begin{equation} 
\label{transformphihata}
\widehat{\Phi}_a^\Omega = \widehat{\Phi}^ {\mathbb{D}}_{\varphi(a)}\circ \varphi.
\end{equation}
Here there is no renormalization constant since $\widehat{\Phi}_a$ satisfies a Dirichlet boundary condition.

Normal and tangential derivatives transform in the following way. If $v:\mathbb{D}\to\C$, then
\begin{align}
\label{transformtangentderiv}
\frac{\partial}{\partial\tau}\left[ v\circ \varphi\right] (z) & = |\varphi'(z)|\frac{\partial v}{\partial\tau}\left(\varphi(z)\right),\quad z\in\partial\Omega, \\
\label{transformnormalderiv}
\frac{\partial}{\partial\nu}\left[ v\circ \varphi\right] (z) & = |\varphi'(z)|\frac{\partial v}{\partial\nu}\left(\varphi(z)\right),\quad z\in\partial\Omega. 
\end{align}

Using \eqref{transformphihata}, \eqref{transformtangentderiv}, \eqref{transformnormalderiv} together with formula \eqref{e2.16} characterizing $g^a$, we find, for $a\in\Omega_*^k$,
\begin{equation}\label{transformga}
g^a \circ f = g^{\varphi(a)}.
\end{equation}

On the other hand, we claim that
\begin{equation}\label{transformu*}
u^{\Omega}_{*,a,g} = u^{\mathbb{D}}_{*,\varphi(a),g\circ f} \circ \varphi.
\end{equation}
Indeed, this follows from the observation that the two sides of \eqref{transformu*} agree on $\po$, combined with \eqref{u*1} and with the fact, when $H$ is harmonic in $\D$,  we may write 
\bes
\frac{\va (z)-\va(a)}{|\va (z)-\va (a)|}e^{\im H\circ \va(z)}=\frac{z-a}{| z-a|}e^{\im K(z)},\quad\text{with }K\text{ harmonic in }\O.
\ees
%
As a consequence of \eqref{transformu*} and \eqref{transformnormalderiv}, we obtain, recalling  the definition \eqref{defN(a,g)} of $N$,
\begin{equation}\label{transformN}
N^{\Omega}(a,g) = |\varphi'|\, N^{\mathbb D}(\varphi(a),g\circ f) \circ \varphi.
\end{equation}

The formulas of the renormalized energies $\widehat{W}$ and $W$ transport in a more complicated way.
\begin{lemma}\label{transformW}
Let $\alpha\in\mathbb{D}_*^k$, $a:=f(\alpha)$ and $g:\partial\Omega\to\mathbb{S}^1$. Then
\begin{gather}
\label{transformW(a,g)}
W^{\Omega}(a,g)  = W^{\mathbb D}(\alpha,g\circ f)+\pi \sum_j d_j^2 \log |f'(\alpha_j)|,
\\
\label{transformhatW(a)}
\widehat{W}^{\Omega}(a)  = \widehat{W}^{\mathbb D}(\alpha)+\pi \sum_j d_j^2 \log |f'(\alpha_j)|.
\end{gather}
\end{lemma}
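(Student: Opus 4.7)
The plan is to insert the explicit formulas \eqref{wag} and \eqref{e2.10} for $W$ and $\widehat{W}$ and then push every ingredient to the disc $\D$ via the transport rules \eqref{transformphiag} and \eqref{transformphihata}. The only local data that enter the renormalized energies are (i) the cross differences $\log|a_j-a_l|$, (ii) the regular parts $R$ or $\widehat{R}$ evaluated at the singularities, and (iii) the boundary integral in the case of $W$. Each of these is straightforwardly rewritten in terms of $\alpha=\varphi(a)$ and $g\circ f$, and the only discrepancy that survives the simplifications is precisely the conformal correction $\pi\sum_j d_j^2\log|f'(\alpha_j)|$.

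I would first treat $\widehat{W}$, which is the cleaner case since \eqref{transformphihata} involves no additive constant. Using \eqref{transformphihata} and the definitions \eqref{e2.8} of $\widehat R$ on $\O$ and on $\D$, write
\[
\widehat{R}^{\Omega}_a(x) \;=\; \widehat{\Phi}^{\D}_\alpha(\varphi(x))-\sum_j d_j\log|x-a_j| \;=\; \widehat{R}^{\D}_\alpha(\varphi(x))+\sum_j d_j\log\frac{|\varphi(x)-\alpha_j|}{|x-a_j|}.
\]
Evaluating at $x=a_l$: each off-diagonal term $\log(|\alpha_l-\alpha_j|/|a_l-a_j|)$ (with $j\neq l$) pairs up, after summation against $d_l$, with the analogous difference extracted from the splitting $\log|a_j-a_l|=\log|\alpha_j-\alpha_l|-\log(|\alpha_j-\alpha_l|/|a_j-a_l|)$ in \eqref{e2.10}, and the two cross contributions cancel. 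The diagonal term ($j=l$) is a removable singularity whose limit equals $\log|\varphi'(a_l)|=-\log|f'(\alpha_l)|$; multiplied by the factor $-\pi d_l$ and then by $d_l$, it produces exactly the claimed correction $+\pi d_l^2\log|f'(\alpha_l)|$.

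The argument for $W$ has the same skeleton, plus two additional bookkeeping tasks. First, the constant $C=C(a,g,f)$ appearing in \eqref{transformphiag} contributes $\pi C d$ through the boundary integral $\tfrac12\int_{\po}\Phi_{a,g}(g\wedge\partial_\tau g)$ (since $\int_{\po} g\wedge\partial_\tau g=2\pi d$), and $-\pi C\sum_l d_l=-\pi C d$ through the regular-part sum, so these two contributions cancel, and the precise value of $C$ is irrelevant. Second, in the boundary integral one performs the change of variables $z=f(\zeta)$: using $ds_\Omega=|f'|\,ds_{\D}$ together with \eqref{transformtangentderiv} applied to $v=g\circ f$, one gets the factor $|\varphi'(f(\zeta))|\,|f'(\zeta)|=1$, so the integral on $\po$ becomes literally the corresponding integral on $\so$ with $g$ replaced by $g\circ f$ and $\Phi^\Omega_{a,g}$ replaced by $\Phi^{\D}_{\alpha,g\circ f}$. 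Combining this with the treatment of the regular parts already carried out for $\widehat{W}$, formula \eqref{transformW(a,g)} drops out.

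The only subtle technical point — minor but worth isolating — is the diagonal limit $\lim_{x\to a_l}\log(|\varphi(x)-\alpha_l|/|x-a_l|)=\log|\varphi'(a_l)|$, which relies on the first-order Taylor expansion of $\varphi$ at $a_l$ and hence on $\varphi\in C^1$ up to the boundary; this is ensured by the $C^{1,\beta}$ regularity of $\Omega$. Everything else is algebraic manipulation of the explicit formulas.
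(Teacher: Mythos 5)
Your proposal is correct and follows essentially the same route as the paper's proof: expand $W$ and $\widehat W$ via the explicit formulas \eqref{wag} and \eqref{e2.10}, transport $\Phi_{a,g}$, $\widehat\Phi_a$ and the regular parts to $\D$ via \eqref{transformphiag}--\eqref{transformphihata}, observe that the cross terms and the additive constant $C$ cancel (the latter because $\int_{\po}g\wedge\p_\tau g=2\pi d=2\pi\sum d_j$), and that the diagonal removable singularity produces $\log|f'(\alpha_j)|$. The only cosmetic difference is that you treat $\widehat W$ first and then $W$, whereas the paper computes $W$ and remarks that $\widehat W$ follows by the same (simpler) computation or from $\widehat W(a)=W(a,g^a)$; you also spell out the Jacobian cancellation in the boundary integral, which the paper leaves implicit.
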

\begin{proof}
Using definition of $R_{a,g}$ \eqref{e2.7},  together with \eqref{transformphiag}, we compute, for $z\in\mathbb{D}$,
\begin{equation}\label{transformRag}
\begin{split}
R^{\Omega}_{a,g}(f(z)) & = \Phi^{\mathbb D}_{\alpha,g\circ f}(z)-\sum_{l=1}^k d_l \log | f(z) - f(\alpha_l) | + C \\
& = R^{\mathbb D}_{\alpha,g\circ f}(z) 
-\sum_{l=1}^k d_l \log \left| \frac{f(z)-f(\alpha_l)}{z-\alpha_l} \right| +C.
\end{split}\end{equation}
The above is well-defined when $z\neq\alpha_j$, and extends by continuity at $z=\alpha_j$. 
In particular, 
\begin{equation}\label{transformRag2}
R^{\Omega}_{a,g}(f(\alpha_j)) = R^{\mathbb D}_{\alpha,g\circ f}(\alpha_j) 
-\sum_{l\neq j} d_l \log \left| \frac{f(\alpha_j)-f(\alpha_l)}{\alpha_j-\alpha_l} \right| - d_j \log |f'(\alpha_j)| +C.
\end{equation}
Finally, we plug \eqref{transformphiag} and \eqref{transformRag2} into formula \eqref{wag} expressing $W$ in terms of $\Phi_{a,g}$ and $R_{a,g}$. We obtain, using also the fact that $\deg (g,\partial\Omega) = d = \sum d_j$,
\bes
\begin{aligned}
W^{\Omega}(a,g) =&   -\pi \sum_{j\neq l} d_jd_l \log |\alpha_j-\alpha_l| 
+ \frac{1}{2}\int_{\po} \Phi^{\mathbb D}_{\alpha,g\circ f} \circ \varphi \left(g\wedge \frac{\p g}{\p\tau}\right) \\
&+\frac{1}{2} C \int_{\partial\Omega} g\wedge\frac{\partial g}{\partial\tau} - \pi\sum_j d_j C 
- \pi \sum_{j=1}^k d_j R^{\mathbb D}_{\alpha,g\circ f}(\alpha_j)
+\pi\sum_{j=1}^k d_j^2 \log |f'(\alpha_j)| \\
=&  -\pi \sum_{j\neq l} d_jd_l \log |\alpha_j-\alpha_l| 
+ \frac{1}{2}\int_{\p\mathbb{D}} \Phi^{\mathbb D}_{\alpha,g\circ f} (g\circ f)\wedge \frac{\p}{\p\tau} (g\circ f)\\
& \quad
- \pi \sum_{j=1}^k d_j R^{\mathbb D}_{\alpha,g\circ f}(\alpha_j)
+\pi\sum_{j=1}^k d_j^2 \log |f'(\alpha_j)|\\
& = W^{\mathbb D}(\alpha,g\circ f)+\pi \sum_j d_j^2 \log |f'(\alpha_j)|.
\end{aligned}
\ees
Formula \eqref{transformhatW(a)} can be proved following the same lines (the calculations are even simpler than for \eqref{transformW(a,g)}).  Alternatively, we can obtain \eqref{transformhatW(a)} via the relation $\widehat{W}(a)=W(a,g^a)$.
\end{proof}
%

\section{Explicit formulas in the unit disc}
\label{sef}

In this section we derive explicit formulas for $\widehat{W}^{\mathbb D}$, $W^{\mathbb{D}}$ and $N^{\mathbb{D}}$.

We start by recalling the explicit formulas for $\widehat\Phi^\D_\alpha$ and $\widehat W^\D$ \cite{lefterradulescu}:  for $\alpha\in \mathbb{D}_*^k$, we have
\begin{align}
\label{hatphidisc}
\widehat{\Phi}_\alpha^{\mathbb D} (z) & = \sum_{j=1}^k d_j \left( \log|z-\alpha_j|-\log|1-\overline{\alpha}_jz| \right), \quad\forall\,  z\in\mathbb{D},
\\
\label{hatwdisc}
\widehat{W}^{\mathbb D}(\alpha) & =  -\pi\sum_{j\neq l} d_jd_l\log |\alpha_j-\alpha_l|+\pi\sum_{j\neq l}d_jd_l\log |1-\overline{\alpha_j}\alpha_l|\\ \nonumber
& \quad  +\pi\sum_j d_j^2\log(1-|\alpha_j|^2).
\end{align}
The formulas for 
 $W$ and $N$ are more involved.
\begin{lemma}\label{discexplicit}
Let $\alpha^0 \in \mathbb{D}_*^k$ be fixed, and $g^0:=g^{\alpha^0}:\so\to\mathbb{S}^1$ be an associated canonical boundary map.  (Recall that $g^{0}$ is defined up to a multiplicative constant.) Then it holds: \\
(i) For $\alpha\in\mathbb{D}_*^k$ and for $\psi\in H^{1/2}(\so;\R)$,
\begin{equation}\label{Wdisc}
W^{\mathbb D}(\alpha,g^0e^{\im \psi})  = \widehat{W}^{\mathbb D}(\alpha)+\frac{1}{2}\int_{\mathbb D}\left|\nabla \left(\psi_{\alpha,g^0}^*+\psi^*\right)\right|^2 
,
\end{equation}
and, for $z\in\mathbb{D}$,
\begin{equation}\label{nablapsi*ag0disc}
\nabla\psi_{\alpha,g^0}^* (z) = 
2\sum_{j=1}^k d_j \left(\frac{\alpha_j(1-\overline{\alpha}_j z)}{|1-\overline{\alpha}_j z|^2}-\frac{\alpha^0_j(1-\overline{\alpha}^0_j z)}{|1-\overline{\alpha}^0_j z|^2}\right) \in \C\simeq \R^2.
\end{equation}
(ii) For $\alpha\in\mathbb{D}_*^k$ and for $\psi\in H^{1/2}(\so;\R)$,
\begin{equation}\label{Ndisc}
N^{\mathbb D}(\alpha,g^0 e^{\im\psi}) = \frac{\partial\psi^*}{\partial\tau} + 2 \sum_j d_j \frac{\alpha_j^0\wedge z}{|z-\alpha_j^0|^2} - 2\sum_j d_j \frac{\alpha_j\wedge z}{|z-\alpha_j|^2}.
\end{equation}
\end{lemma}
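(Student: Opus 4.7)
The plan is to derive both formulas from the general identity (2.24) combined with the explicit expression (3.3) for $\widehat{\Phi}^{\mathbb D}_\alpha$. Everything reduces to computing the single auxiliary quantity $\psi^*_{\alpha,g^0}$ explicitly on the unit disc.

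I would first handle part (i). By definition (2.19), writing $g = g^0 e^{\im\psi} = g^{\alpha}e^{\im\psi_{\alpha,g}}$, one reads off $\psi_{\alpha, g^0 e^{\im\psi}} = \psi_{\alpha,g^0} + \psi$ (modulo a constant), where $\psi_{\alpha,g^0}$ is defined by $g^0 = g^\alpha e^{\im \psi_{\alpha,g^0}}$. Since taking the harmonic conjugate $\psi \mapsto \psi^*$ is $\R$-linear, this gives $\psi^*_{\alpha,g^0e^{\im\psi}} = \psi^*_{\alpha,g^0} + \psi^*$. Applying the key identity (2.24) and rewriting the $H^{1/2}$-norm via (2.17) as $\int_{\mathbb D}|\nabla\psi^*_{\alpha,g^0e^{\im\psi}}|^2$, I immediately obtain (3.6).

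Next I would compute $\psi^*_{\alpha,g^0}$ explicitly. By (2.21), $\Phi_{\alpha,g^0} = \widehat{\Phi}_\alpha - \psi^*_{\alpha,g^0}$, so $\psi^*_{\alpha,g^0}$ is the harmonic function (normalized by $\int_{\so}\psi^*_{\alpha,g^0}=0$) whose normal derivative on $\so$ equals $\partial_\nu\widehat{\Phi}_\alpha - \partial_\nu\Phi_{\alpha,g^0}$. Using the Neumann condition in (2.5) together with the definition (2.16) of $g^0=g^{\alpha^0}$, this boundary datum is exactly $\partial_\nu\widehat{\Phi}_\alpha - \partial_\nu\widehat{\Phi}_{\alpha^0}$. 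I would then verify that
\[
\psi^*_{\alpha,g^0}(z) = -2\sum_j d_j\bigl[\log|1-\overline{\alpha}_j z| - \log|1-\overline{\alpha}^0_j z|\bigr] + \text{const}
\]
has the correct normal derivative. Each $\log|1-\overline{a} z|$ is harmonic in $\mathbb D$ (since $1/\overline{a}\notin\overline{\mathbb D}$), and using the on-$\so$ identity $|1-\overline{a}z|=|z-a|$ together with $\nabla\log|1-\overline{a}z|=(|a|^2 z - a)/|1-\overline{a}z|^2$, one checks after a short algebraic manipulation that $-2\,\partial_\nu\log|1-\overline{a}z| = (1-|a|^2)/|z-a|^2 - 1$; summing over $j$ with signs yields $\partial_\nu\widehat\Phi_\alpha-\partial_\nu\widehat\Phi_{\alpha^0}$, the $-1$ terms cancelling because $\sum d_j = d$ is the same for $\alpha$ and $\alpha^0$. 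Differentiating the above explicit formula and using $-\nabla\log|1-\overline{a}z| = a(1-\overline{a}z)/|1-\overline{a}z|^2$ gives (3.7).

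For part (ii), the definition (2.34) and (2.21) yield
\[
N^{\mathbb D}(\alpha,g^0e^{\im\psi}) = -\partial_\tau\Phi_{\alpha,g^0e^{\im\psi}} = -\partial_\tau\widehat{\Phi}_\alpha + \partial_\tau\psi^*_{\alpha,g^0} + \partial_\tau\psi^*.
\]
Since $\widehat{\Phi}_\alpha$ vanishes on $\so$, the first term drops out. It remains to compute $\partial_\tau\psi^*_{\alpha,g^0}$ on $\so$. Here I would use that on $\so$ one has $\log|1-\overline{a}z| = \log|z-a|$, and a direct calculation of the tangential derivative of $\log|z-a|$ at $z=e^{\im\theta}$ gives $\partial_\tau\log|z-a| = (a\wedge z)/|z-a|^2$ with the wedge convention of the paper. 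Substituting produces exactly (3.8).

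The only subtle bookkeeping is the additive constants that arise repeatedly (from the $\so$-ambiguity in $g^a$, from the normalization of $\psi^*$, and from fixing $\psi$ modulo constants); these are harmless because every formula in the conclusion involves either a gradient or a tangential derivative, which kill constants. This is the only part that requires care — the rest is assembling pieces (2.17), (2.21), (2.16), (2.34) and the explicit expression (3.3).
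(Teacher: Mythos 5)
Your proof is correct. For part (i) the route is essentially the paper's: both derive \eqref{Wdisc} from \eqref{e2.24} and the observation $\psi_{\alpha,g^0e^{\im\psi}}=\psi_{\alpha,g^0}+\psi$, and both then compute $\psi^*_{\alpha,g^0}$ by identifying its Neumann boundary datum as $\partial_\nu\widehat\Phi_\alpha - \partial_\nu\widehat\Phi_{\alpha^0}$; whether one phrases this via $\partial_\nu\psi^*_{\alpha,g^0}=-\partial_\tau\psi_{\alpha,g^0}$ together with \eqref{e3.36} (as the paper does) or via $\psi^*_{\alpha,g^0}=\widehat\Phi_\alpha-\Phi_{\alpha,g^0}$ together with \eqref{e2.5} and \eqref{e2.16} (as you do) is merely cosmetic.

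For part (ii), however, your argument is a genuinely different and in fact more economical route. The paper starts from the representation \eqref{u*1} of the canonical harmonic map, writes $N=\partial_\tau H^*-\sum d_j\partial_\tau\log|z-\alpha_j|$, then computes $\partial_\tau H$ from the boundary relation $e^{\im H}=\prod(\cdot)^{-d_j}g^0e^{\im\psi}$ and integrates to get $H^*$ explicitly (formulas \eqref{formulaN1}--\eqref{formulaH*}). You instead use the characterization $N=-\partial_\tau\Phi_{\alpha,g}$ from \eqref{defN(a,g)}, plug in \eqref{e2.21} to write $\Phi_{\alpha,g^0e^{\im\psi}}=\widehat\Phi_\alpha-\psi^*_{\alpha,g^0}-\psi^*$, use the Dirichlet condition $\widehat\Phi_\alpha|_{\so}=0$ to kill the first term, and then simply differentiate tangentially the explicit formula for $\psi^*_{\alpha,g^0}$ already obtained in part (i). This avoids introducing $H$ and $H^*$ entirely and recycles the work of (i). The one identity you invoke that deserves being written out, $\partial_\tau\log|z-a|=\frac{a\wedge z}{|z-a|^2}$ on $\so$, is precisely \eqref{tangentderivlog} in the paper, so it is available. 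Both approaches buy the same result; yours has the advantage of exhibiting more clearly that $N^{\mathbb D}(\alpha,g^0e^{\im\psi})$ is nothing but $\partial_\tau$ of the difference $\psi^*_{\alpha,g^0}+\psi^*$ of harmonic conjugates, which makes the structure $\psi$-linear term plus $\alpha$-dependent Poisson-type kernel transparent.
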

\begin{proof}[Proof of {\it (i)}]
Since we will always work in the unit disc, we drop the superscript $\mathbb{D}$.

We know from (\ref{e2.24}) that for $g\in H^{1/2}(\so ;\so)$,
\be\label{e3.27}
W(\alpha,g)=\widehat{W}(\alpha)+\frac{1}{2}\left|\psi_{\alpha,g}\right|^2_{H^{1/2}},
\ee
where $\psi_{\alpha,g}$ is defined (modulo a constant) in (\ref{e2.19}) by
\be\label{e3.28}
g=g^\alpha e^{\im \psi_{\alpha,g}}.
\ee
Taking $g=g^0e^{\im \psi}$, and using $g^0=g^\alpha e^{\im \psi_{\alpha,g^0}}$, we find
\be\label{e3.29}
g=g^\alpha e^{\im \psi_{\alpha,g^0}}e^{\im \psi}=g^\alpha e^{\im (\psi_{\alpha,g^0}+\psi)},
\ee
so that it holds
\be\label{e3.30}
\psi_{\alpha,g}=\psi_{\alpha,g^0}+\psi.
\ee
This leads to 
\be\label{e3.31}
W(\alpha,g^0e^{\im \psi})  =\widehat{W}(\alpha)+\frac{1}{2}\left|\psi_{\alpha,g^0}+\psi\right|^2_{H^{1/2}} =\widehat{W}(\alpha)+\frac{1}{2}\int_{\mathbb D}\left|\nabla\left(\psi_{\alpha,g^0}^*+\psi^*\right)\right|^2,
\ee
i.e., \eqref{Wdisc} holds. 
In order to complete the proof of {\it (i)}, it remains to compute $\nabla \psi^*_{\alpha,g^0}$.

Recall that $\psi_{\alpha,g^0}^*$ is characterized by
\be\label{e3.34}
\begin{cases}
\Delta \psi_{\alpha,g^0}^*=0 & \text{in }\mathbb{D}\\
\d\frac{\partial}{\partial\nu}\psi_{\alpha,g^0}^*=-\frac{\partial}{\partial\tau}\psi_{\alpha,g^0} & \text{on }\so\\
\d\int_{\so}\psi_{\alpha,g^0}^* =0 &
\end{cases}.
\ee
Since $e^{\im \psi_{\alpha,g^0}}=g^0/g^{\alpha}$, we have
\be\label{e3.35}
\frac{\partial\psi_{\alpha,g^0}}{\partial\tau} =g^0\wedge \frac {\p g^0}{\p\tau}- g^\alpha\wedge \frac{\p g^\alpha}{\p\tau}.
\ee
By definition of $g^\alpha$ and $g^0=g^{\alpha^0}$, and using (\ref{hatphidisc}), we obtain
\be\label{e3.36}
\begin{split}
g^\alpha\wedge \frac{\p g^\alpha}{\p\tau} & = \frac{\partial\widehat{\Phi}_\alpha}{\partial\nu} = \sum_j d_j \frac{\partial}{\partial\nu}\left[\log|z-\alpha_j|-\log|1-\overline{\alpha_j}z|\right],\\
g^0\wedge \frac{\p g^0}{\p\tau} & = \frac{\partial\widehat{\Phi}_{\alpha^0}}{\partial\nu} = \sum_j d_j \frac{\partial}{\partial\nu}\left[\log|z-\alpha^0_j|-\log|1-\overline{\alpha^0_j}z|\right].
\end{split}\ee

We also note the identity
\be\label{e3.38}
1 = \frac{\partial}{\partial\nu}\left[\log|1-\overline{\alpha}z|+\log|z-\alpha|\right],\quad \fo \alpha\in\mathbb{D}.
\ee

Combining \eqref{e3.35})-\eqref{e3.38},  
we obtain
\be\label{e3.39}
\frac{\partial\psi_{\alpha,g^0}^*}{\partial\nu}=-\frac{\partial\psi_{\alpha,g^0}}{\partial\tau}=
\frac{\partial}{\partial\nu}\left[2\sum_j d_j\left(\log|1-\overline{\alpha^0_j}z|-\log|1-\overline{\alpha_j}z|\right)\right].
\ee
Therefore, there exists a constant $c(\alpha)\in\R$ such that
\be\label{e3.40}
\psi_{\alpha,g^0}^*(z)=2\sum_j d_j\left(\log|1-\overline{\alpha}^0_j z|-\log|1-\overline{\alpha}_j z|\right)+c(\alpha),\quad\quad\forall\,  x\in\mathbb{D}.
\ee
Indeed, the right-hand side of \eqref{e3.40} satisfies \eqref{e3.34}, and so does $\psi^*_{\alpha, g^0}$. The constant $c(\alpha)$ is determined by the normalization condition $\int \psi^*_{\alpha, g^0}=0$. From \eqref{e3.40} we immediately obtain \eqref{nablapsi*ag0disc}.
\end{proof}

\begin{proof}[Proof of $(ii)$]
In view of formula \eqref{u*1}, we have
\begin{equation}\label{formulaN1}
N(\alpha,g^0e^{\im \psi})  =\frac{\partial H}{\partial\nu} + 
\frac{\partial}{\partial\nu}\left[\sum_j d_j \theta(z-\alpha_j)\right] =\frac{\partial H^*}{\partial\tau} - \sum_j d_j \frac{\partial}{\partial\tau}\left[ \log |z-\alpha_j| \right],
\end{equation}
where $H^*$ is the harmonic conjugate of $H$, characterized (up to a constant) by
\begin{equation}\label{H*}
\begin{cases}
\Delta H^* = 0 & \text{in }\mathbb{D} \\\d
\frac{\partial H^*}{\partial\nu} = -\frac{\partial H}{\partial\tau} & \text{on }\so.
\end{cases}
\end{equation} 
On the boundary $\so$, we have
\begin{equation}\label{eiH}
e^{\im H} = \prod_j \left(\frac{z-\alpha_j}{|z-\alpha_j|}\right)^{-d_j} g = \prod_j \left(\frac{z-\alpha_j}{|z-\alpha_j|}\right)^{-d_j} g^0 e^{\im\psi}, 
\end{equation}
so that
\begin{equation}\label{tangentderivH1}
\begin{split}
\frac{\partial H}{\partial\tau} & = 
\frac{\partial\psi}{\partial\tau} + g^0 \wedge \frac{\partial g^0}{\partial \tau} 
- \sum_j d_j \frac{\partial}{\partial\tau}\left[\theta(z-\alpha_j)\right] \\
& = -\frac{\partial\psi^*}{\partial\nu} + \frac{\partial \widehat{\Phi}_{\alpha^0}}{\partial\nu}
- \sum_j d_j \frac{\partial}{\partial\nu}\left[ \log |z-\alpha_j|\right] \\
& = -\frac{\partial\psi^*}{\partial\nu} + \sum_j d_j \frac{\partial}{\partial\nu}\left[\log|z-\alpha_j^0|-\log|1-\overline{\alpha}_j^0 z|\right]  - \sum_j d_j \frac{\partial}{\partial\nu}\left[ \log |z-\alpha_j|.\right]
\end{split}\end{equation}
Here we have used the definition of $g^0=g^{\alpha^0}$ and the explicit formula \eqref{hatphidisc} for $\widehat{\Phi}_\alpha$. Using \eqref{e3.38}, 
we obtain
\begin{equation}\label{tangentderivH2}
\frac{\partial H}{\partial\tau} = -\frac{\partial}{\partial\nu}\left[ \psi^* + 
\sum_j d_j (2\log|1-\overline{\alpha}_j^0 z|-\log|1-\overline{\alpha}_j z|)\right].
\end{equation}
We deduce that there exists a constant $c=c(\psi,\alpha)$ such that
\begin{equation}\label{formulaH*}
H^* = \psi^* + 
\sum_j d_j (2\log|1-\overline{\alpha}_j^0 z|-\log|1-\overline{\alpha}_j z|)+c.
\end{equation}
From \eqref{formulaH*} and \eqref{formulaN1} we obtain
\begin{equation}\label{formulaT*2}
N(\alpha,g^0 e^{\im\psi}) = \frac{\partial\psi^*}{\partial\tau} + 
\sum_j d_j \frac{\partial}{\partial\tau}\left[
2\log|1-\overline{\alpha}_j^0 z|-\log|1-\overline{\alpha}_j z| - \log|z-\alpha_j| \right].
\end{equation}
Using the fact that for every $\alpha\in\mathbb{D}$ we have
\begin{equation}\label{tangentderivlog}
\frac{\partial}{\partial\tau}\left[\log|z-\alpha|\right]
=\frac{\partial}{\partial\tau}\left[\log|1-\overline{\alpha}z|\right]
=\frac{\alpha\wedge z}{|z-\alpha|^2},\quad \fo z\in\so, 
\end{equation}
we finally obtain
\begin{equation}\label{formulaN2}
N(\alpha,g^0 e^{\im\psi}) = \frac{\partial\psi^*}{\partial\tau} + 2 \sum_j d_j \frac{\alpha_j^0\wedge z}{|z-\alpha_j^0|^2} - 2\sum_j d_j \frac{\alpha_j\wedge z}{|z-\alpha_j|^2},
\end{equation}
as claimed.
\end{proof}

%
%
%

\section{Nondegeneracy of $W$ is stable}\label{snd}

In this section we show that, if $a^0\in(\Omega_0)_*^k$ is a nondegenerate critical point of $W^{\O_0}(\cdot,g_0)$, with $g_0:\p\O_0\to\so$,  then for  $\Omega$ \enquote{close to}  $\Omega_0$, and for  $g:\po\to\so$ \enquote{close to} $g_0$, there exists a unique nondegenerate critical point $a$ of $W^\O(\cdot,g)$ \enquote{close to} $a^0$. Unlike the analysis we perform in subsequent sections, smoothness (of the domain or of the boundary datum) is not crucial here. In order to emphasize this fact, we first state and prove a result concerning rough boundary datum (Proposition \ref{propndstable}). We next present a \enquote{smoother} variant of the stability result (Proposition \ref{p301}).

The notion of closeness will be expressed in terms of  conformal representations. Let us first introduce some definitions.
Let $X$ be the space
\be\label{e3.22}
X:=\left\lbrace f\in C^1\left(\overline{\mathbb{D}};\C\right);\; f \text{ is holomorphic in }\mathbb{D}\right\rbrace,
\ee
which is a Banach space with 
 the  $\|\cdot\|_{C^1}$ norm. In $X$ we will consider the open set
\be\label{e3.23}
V:=\left\lbrace f\in X;\, f\text{ is bijective and } f^{-1}\in X \right\rbrace.
\ee
Every $f\in V$ induces a conformal representation $f:\mathbb{D}\to\O:=f(\mathbb{D})$, which is $C^1$ up to the boundary. In what follows, we denote by $f^{-1}$ both the inverse of $f:\overline\D\to\overline\O$ and of $\d f_{|\so}:\so\to\po$. 

Similar considerations apply to the space
\be\label{e3.22a}
X_\beta:=\left\lbrace f\in C^{1,\beta}\left(\overline{\mathbb{D}};\C\right);\; f \text{ is holomorphic in }\mathbb{D}\right\rbrace,
\ee
and to the open set
\be\label{e3.23a}
V_\beta:=\left\lbrace f\in X;\, f\text{ is bijective and } f^{-1}\in X_\beta \right\rbrace.
\ee
Here, $0<\beta<1$.
\begin{proposition}\label{propndstable}
Let $\Omega_0$ be a smooth bounded simply connected $C^{1,\beta}$ domain and $f_0:\mathbb{D}\to\Omega_0$ be a conformal representation. Assume that there exists $\alpha^0\in\mathbb{D}_*^k$ and $g_0\in H^{1/2}(\so;\mathbb{S}^1)$ such that $a^0:=f_0(\alpha^0)$ is a nondegenerate critical point of $W^{\Omega_0}(\cdot,g_0\circ f_0^{-1})$.

Then there exist a neighborhood $\mathcal{V}$ of $(f_0,0)$ in $V\times H^{1/2}(\so;\R)$, a smooth map $\alpha : \mathcal{V} \to \mathbb{D}_*^k$, and some $\delta>0$, such that the following  holds.

Let $(f,\psi) \in \mathcal{V}$ and consider the domain $\Omega:=f(\mathbb{D})$ together with the boundary datum $g:=(g_0 e^{\im\psi})\circ f^{-1} \in H^{1/2}(\partial\Omega;\mathbb{S}^1)$. Then $W^{\Omega}(\cdot,g)$ admits a unique critical point $a\in\Omega_*^k$ satisfying $|f^{-1}(a)-\alpha^0| <\delta$. This $a$ is  given by the map $a(f,\psi)=f\left(\alpha(f,\psi)\right)$. Furthermore, $a$ is a nondegenerate critical point of $W^\O(\cdot , g)$.
\end{proposition}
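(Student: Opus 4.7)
The plan is to transport the problem to the unit disc via the conformal representations $f$ and $f_0$, and then apply the classical implicit function theorem (with Banach-space parameter) to a gradient equation in $\alpha\in\D^k_*$. By Lemma~\ref{transformW}, setting $a=f(\alpha)$ and $g=(g_0 e^{\im\psi})\circ f^{-1}$,
\[
W^\Omega(a,g)=W^\D(\alpha,g_0 e^{\im\psi})+\pi\sum_j d_j^2 \log|f'(\alpha_j)|=:F(\alpha;f,\psi),
\]
and since $f$ is a $C^1$ diffeomorphism $\ov{\D}\to\ov{\Omega}$, (nondegenerate) critical points of $W^\Omega(\cdot,g)$ near $a^0$ correspond bijectively, via $a=f(\alpha)$, to (nondegenerate) critical points of $F(\cdot;f,\psi)$ near $\alpha^0$. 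In particular, $F(\cdot;f_0,0)=W^{\Omega_0}(f_0(\cdot),g_0\circ f_0^{-1})$, so the hypothesis on $a^0$ translates to $\alpha^0$ being a nondegenerate critical point of $F(\cdot;f_0,0)$.

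Before invoking the IFT I have to check that $F$ is smooth in $(\alpha;f,\psi)$. Writing once and for all $g_0=g^{\alpha^0}e^{\im\tilde\psi_0}$ for a fixed lifting $\tilde\psi_0\in H^{1/2}(\so;\R)$, the identity $\psi_{\alpha,g_0 e^{\im\psi}}=\psi_{\alpha,g^{\alpha^0}}+\tilde\psi_0+\psi$ (mod $\R$) combined with \eqref{e2.24} gives
\[
F(\alpha;f,\psi)=\widehat W^\D(\alpha)+\frac12\int_\D \left|\na\psi^*_{\alpha,g^{\alpha^0}}+\na\tilde\psi_0^*+\na\psi^*\right|^2+\pi\sum_j d_j^2\log|f'(\alpha_j)|.
\]
Here $\widehat W^\D$ is smooth on $\D^k_*$ by \eqref{hatwdisc}; the explicit formula \eqref{nablapsi*ag0disc} shows that $\alpha\mapsto \na\psi^*_{\alpha,g^{\alpha^0}}$ is smooth into $L^2(\D;\R^2)$, with denominators $|1-\ov{\alpha}_j z|$ uniformly bounded away from $0$ when $\alpha$ stays in compact subsets of $\D^k_*$; the map $\psi\mapsto\na\psi^*$ is continuous linear from $H^{1/2}(\so;\R)$ to $L^2(\D;\R^2)$, so the quadratic integral is $C^\infty$ in $(\alpha,\psi)$; finally $(f,\alpha_j)\mapsto\log|f'(\alpha_j)|$ is smooth on $V\times\D$ because $f\mapsto f'(\alpha_j)$ is bounded linear and $f'(\alpha_j)\neq 0$ for $f\in V$.

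The partial Fréchet derivative $D^2_\alpha F(\alpha^0;f_0,0)$, being conjugate via $df_0(\alpha^0)$ to the nondegenerate Hessian $D^2_a W^{\Omega_0}(a^0,g_0\circ f_0^{-1})$, is invertible. The implicit function theorem then provides a neighborhood $\mathcal V$ of $(f_0,0)$ in $V\times H^{1/2}(\so;\R)$, a number $\delta>0$, and a smooth map $\alpha:\mathcal V\to\D^k_*$ with $\alpha(f_0,0)=\alpha^0$ such that, for $(f,\psi)\in\mathcal V$, $\alpha(f,\psi)$ is the unique solution of $\na_\alpha F(\alpha;f,\psi)=0$ satisfying $|\alpha-\alpha^0|<\delta$. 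Setting $a(f,\psi):=f(\alpha(f,\psi))$ yields the announced critical point; shrinking $\mathcal V$ so that $D^2_\alpha F(\alpha(f,\psi);f,\psi)$ stays invertible (by continuity) and conjugating by $df(\alpha(f,\psi))$ delivers nondegeneracy of $a(f,\psi)$. The only delicate point is technical rather than conceptual: one has to verify that the quadratic $H^{1/2}$ contribution, combined with the explicit smooth $\alpha$-dependence of $\na\psi^*_{\alpha,g^{\alpha^0}}$, really provides the joint $C^2$ regularity of $F$ on which the Banach-parameter IFT relies.
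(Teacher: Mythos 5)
Your proposal is correct and mirrors the paper's own argument: transport $W^\Omega(\cdot,g)$ to the disc via Lemma~\ref{transformW}, establish smoothness of the resulting map on $\D^k_*\times V\times H^{1/2}(\so;\R)$ by combining formula \eqref{e2.24} with the explicit expressions \eqref{hatwdisc}, \eqref{nablapsi*ag0disc} and the holomorphy of $f$, and then apply the implicit function theorem to $\nabla_\alpha$ of that map. The paper proves smoothness (which subsumes the $C^2$ regularity you flag at the end) in its Lemma~\ref{wsmooth} by first reducing WLOG to $g_0=g^{\alpha^0}$ rather than carrying the fixed lifting $\tilde\psi_0$ explicitly as you do, but these are the same computation in different clothing.
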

Before proving Proposition~\ref{propndstable} we state as a lemma the following smoothness result.

\begin{lemma}\label{wsmooth}
The map $\widetilde{W}:\mathbb{D}_*^k\times V \times H^{1/2}(\so;\R)\to\R$, defined by
\begin{equation}\label{tildew1}
\widetilde{W}(\alpha,f,\psi)=W^{f(\mathbb{D})}\left(f(\alpha),(g_0e^{\im \psi})\circ f^{-1}\right),
\end{equation}
is smooth.

Similarly, the map $\widetilde{W}^\beta:\mathbb{D}_*^k\times V_\beta\times C^{1,\beta}(\so;\R)\to\R$, defined by
\begin{equation}\label{barw}
\widetilde{W}^\beta(\alpha,f,\psi)=W^{f(\mathbb{D})}\left(f(\alpha),(g_0e^{\im \psi})\circ f^{-1}\right),
\end{equation}
is smooth.
\end{lemma}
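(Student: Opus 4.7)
The plan is to combine the transport formula from Lemma \ref{transformW} with the decomposition \eqref{Wdisc}, reducing the assertion to a checklist of elementary smoothness facts on the unit disc. First, I would apply \eqref{transformW(a,g)} to write
\[
\widetilde{W}(\alpha, f, \psi) = W^{\mathbb D}(\alpha, g_0 e^{\im\psi}) + \pi \sum_j d_j^2 \log |f'(\alpha_j)|,
\]
whose virtue is that the dependence on $f$ is entirely confined to the second term. Since $f \in V$ is biholomorphic on $\overline{\mathbb D}$, $f'$ is bounded away from zero there, and $(f, \alpha) \mapsto f'(\alpha_j)$ is smooth (e.g.\ by Cauchy's integral formula: the integrand is smooth in $\alpha_j$ interior to $\mathbb D$ and linear in $f$); hence $(f, \alpha) \mapsto \log |f'(\alpha_j)|$ is smooth.

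Next, to analyze $W^{\mathbb D}(\alpha, g_0 e^{\im\psi})$, I would fix an auxiliary point $\alpha_\sharp \in \mathbb D_*^k$ and lift the degree-zero map $g_0/g^{\alpha_\sharp}$ (using \cite{lss}) to produce $\psi_0 \in H^{1/2}(\so; \R)$ with $g_0 = g^{\alpha_\sharp} e^{\im\psi_0}$. Then $g_0 e^{\im\psi} = g^{\alpha_\sharp} e^{\im(\psi_0 + \psi)}$, and formula \eqref{Wdisc} applied with base datum $g^{\alpha_\sharp}$ gives
\[
W^{\mathbb D}(\alpha, g_0 e^{\im\psi}) = \widehat{W}^{\mathbb D}(\alpha) + \frac{1}{2} \int_{\mathbb D} \bigl| \nabla \psi^*_{\alpha, g^{\alpha_\sharp}} + \nabla \psi_0^* + \nabla \psi^* \bigr|^2.
\]
I would then verify four elementary smoothness facts: (i) $\widehat{W}^{\mathbb D}$ is smooth in $\alpha \in \mathbb D_*^k$ by the explicit formula \eqref{hatwdisc}; (ii) $\alpha \mapsto \nabla \psi^*_{\alpha, g^{\alpha_\sharp}}$ is smooth from $\mathbb D_*^k$ into $L^2(\mathbb D; \R^2)$ (indeed into $C^\infty(\overline{\mathbb D}; \R^2)$) by the closed form \eqref{nablapsi*ag0disc}, since the denominators $|1 - \overline{\alpha}_j z|^2$ are bounded below uniformly in $z \in \overline{\mathbb D}$ for $\alpha$ in compact subsets of $\mathbb D_*^k$; (iii) $\psi \mapsto \nabla \psi^*$ is bounded linear from $H^{1/2}(\so; \R)$ into $L^2(\mathbb D; \R^2)$ by \eqref{H1/2}, while $\nabla \psi_0^*$ is a fixed element of $L^2$; and (iv) the squared $L^2$-norm is a continuous quadratic form on $L^2$, hence smooth. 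Expanding the squared integrand and composing through these steps yields smoothness of $(\alpha, \psi) \mapsto W^{\mathbb D}(\alpha, g_0 e^{\im\psi})$, and therefore of $\widetilde{W}$.

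The variant $\widetilde{W}^\beta$ is handled by exactly the same scheme: $f \in V_\beta$ carries more boundary regularity than needed, and $\psi \mapsto \nabla \psi^*$ is still bounded linear from $C^{1,\beta}(\so; \R)$ into $L^2(\mathbb D; \R^2)$ (in fact into $C^{1,\beta}(\overline{\mathbb D}; \R^2)$ by standard elliptic regularity on the disc). The only step requiring any subtlety is the smooth parametric dependence of $\nabla \psi^*_{\alpha, g^{\alpha_\sharp}}$ on $\alpha$; the explicit formula from Lemma \ref{discexplicit} short-circuits this entirely, whereas without it one would need a smoothly parametrized elliptic regularity argument for the Neumann problem defining $\psi^*_{\alpha, g^{\alpha_\sharp}}$.
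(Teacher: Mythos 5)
Your proof is correct and follows essentially the same path as the paper's: transport to the disc via \eqref{transformW(a,g)}, smoothness of $(\alpha,f)\mapsto\log|f'(\alpha_j)|$, then the explicit disc formula \eqref{Wdisc} to reduce to a sum of manifestly smooth terms. The only (cosmetic) difference is that the paper reduces WLOG to $g_0=g^{\alpha^0}$ via the observation $P_g(\alpha,\psi)=P_{g_0}(\alpha,\psi+\psi_0)$, whereas you keep the auxiliary phase $\psi_0$ explicit as a fixed $L^2$ vector $\nabla\psi_0^*$ in the integrand — same reduction, different bookkeeping.
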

\begin{proof}[Proof of Lemma \ref{wsmooth}]
The idea is to rely on the formulas derived in Sections \ref{st} and \ref{sef} in order to obtain an explicit formula for $\widetilde{W}$, from which it will be clear that $\widetilde{W}$ is smooth. 

To start with, formula \eqref{transformW(a,g)} gives
\begin{equation}\label{wsmooth1}
\widetilde{W}(\alpha,f,\psi) = W^{\mathbb{D}}(\alpha,g_0 e^{\im\psi})+\pi \sum_j d_j^2 \log |f'(\alpha_j)|.
\end{equation}

Using the fact that for holomorphic $f$ all derivatives can be estimated locally using only $\|f\|_\infty$, it can be easily shown that the maps
\begin{equation}\label{wsmooth2}
\mathbb{D}_*^k \times V \ni (\alpha,f) \mapsto \log |f'(\alpha_j)|
\end{equation}
are smooth. 

Therefore the second term in the right-hand side of \eqref{wsmooth1} is smooth, and in order to complete the proof of Lemma \ref{wsmooth} it suffices to prove that
\begin{equation}\label{wsmooth3}
\mathbb{D}_*^k \times H^{1/2}(\so;\R) \ni (\alpha,\psi)\mapsto W^{\mathbb{D}}(\alpha,g_0 e^{\im\psi}):=P_{g_0}(\alpha,\psi)
\end{equation}
is smooth. Clearly, if $g\in H^{1/2}(\so ; \so)$ is such that $\deg (g, \so)=\deg (g_0, \so)$, then we may write $g=g_0e^{\im\psi_0}$ for some $\psi_0\in H^{1/2}(\so ; \R)$, and then we have $P_g(\alpha,\psi)=P_{g_0}(\alpha,\psi+\psi_0)$. This implies that the smoothness of $P_{g_0}$ does not depend on the choice of $g_0$. Therefore, we may assume that 
 $g_0=g^{\alpha^0}$ for some $\alpha^0\in\mathbb{D}_*^k$.
  This assumption allows us to use Lemma~\ref{discexplicit}. Using \eqref{Wdisc}, we obtain\begin{equation}\label{wsmooth4}
\begin{split}
W^{\mathbb{D}}(\alpha,g_0 e^{\im\psi}) & = \widehat{W}^{\mathbb D}(\alpha)+\frac{1}{2}\int_{\mathbb D}\left|\nabla\psi_{\alpha,g_0}^*\right|^2 
+\frac{1}{2}\int_{\mathbb D}\left|\nabla\psi^*\right|^2\\
& \quad
+\int_{\mathbb D}\nabla\psi_{\alpha,g_0}^*\cdot\nabla\psi^*.
\end{split}
\end{equation}
We examine the smoothness of the four terms on the right-hand side of \eqref{wsmooth4}. The first term depends only on $\alpha$ and is smooth thanks to formula \eqref{hatwdisc}. The second term depends only on $\alpha$ and is smooth thanks to formula \eqref{nablapsi*ag0disc}. The third term depends only on $\psi$ and is a continuous quadratic form, hence it is smooth. The fourth and last term depends linearly on $\psi$ and is smooth thanks to formula \eqref{nablapsi*ag0disc} again.

Hence the map \eqref{wsmooth3} is smooth, and the proof of the $H^{1/2}$ part of the lemma is complete.

The proof of the $C^{1,\beta}$ part of the  follows the same lines and is left to the reader.
\end{proof}

\begin{proof}[Proof of Proposition~\ref{propndstable}]
Let us first remark the following fact. Fix $f\in V$ and $\psi\in H^{1/2}(\partial\mathbb{\D};\R)$ and consider the domain $\Omega=f(\mathbb{D})$ together with the boundary datum $g=(g_0 e^{\im\psi})\circ f^{-1}$. Then, for any $\alpha\in D_*^k$, $f(\alpha)$ is a nondegenerate critical point of $W^{\Omega}(\cdot,g)$ if and only if $\alpha$ is a nondegenerate critical point of $\widetilde{W}(\cdot,f,\psi)$. This is a simple consequence of the fact that $f$ induces a diffeomorphism from $\mathbb{D}_*^k$ into $\Omega_*^k$.

We consider the map $F:\mathbb{D}_*^k\times V \times H^{1/2}(\so;\R)\to\R^{2k}$,
\begin{equation}\label{propndstable1}
F: (\alpha,f,\psi)\mapsto \nabla_\alpha \widetilde{W}(\alpha,f,\psi).
\end{equation}
Lemma~\ref{wsmooth} ensures that $F$ is smooth. Moreover, the assumption that $a^0$ is a nondegenerate critical point of $W^{\Omega_0}(\cdot,g_0\circ f^{-1})$ ensures that $\alpha^0$ is a nondegenerate critical point of $\widetilde{W}(\cdot,f_0,0)$. Therefore $F(\alpha^0,f_0,0)=0$, and $D_\alpha F(\alpha^0,f_0,0)$ is invertible. 

This enables us to apply the implicit function theorem: there exist of a neighborhood $\mathcal{V}$ of $(f_0,0)$ in $V\times H^{1/2}(\so;\R)$, a smooth map $\alpha : \mathcal{V} \to \mathbb{D}_*^k$, and $\delta>0$, such that, for $(f,\psi)\in\mathcal{V}$ and $|\alpha-\alpha^0|<\delta$,
\begin{equation}\label{propndstable2}
F(\alpha,f,\psi)=0 \Longleftrightarrow \alpha=\alpha(f,\psi).
\end{equation} 
We may also assume that $D_{\alpha}F(\alpha(f,\psi),f,\psi)$ is invertible, so that $\alpha(f,\psi)$ is a nondegenerate critical point of $\widetilde{W}(\cdot,f,\psi)$. This implies that $a:=f\left(\alpha(f,\psi)\right)$ is a nondegenerate critical point of $W^{\Omega}(\cdot,g)$, where $\Omega=f(\mathbb{D})$ and $g=(g_0e^{\im\psi})\circ f^{-1}$. In view of \eqref{propndstable2}, $a$ is the unique critical point of $W^\O(\cdot, g)$ satisfying $|f^{-1}(a)-\alpha^0|<\delta$.

The proof of Proposition \ref{propndstable} is complete.
\end{proof}

%

In what follows, we will use the following smoother version of  Proposition \ref{propndstable}.
\begin{proposition}\label{p301}
Let $\Omega_0$ be a smooth bounded simply connected $C^{1,\beta}$ domain and $f_0:\mathbb{D}\to\Omega_0$ be a conformal representation. Assume that there exists $\alpha^0\in\mathbb{D}_*^k$ and $g_0\in C^{1, \beta}(\so;\mathbb{S}^1)$ such that $a^0:=f_0(\alpha^0)$ is a nondegenerate critical point of $W^{\Omega_0}(\cdot,g_0\circ f_0^{-1})$.

Then there exist a neighborhood $\mathcal{V}$ of $(f_0,0)$ in $V_\beta\times C^{1,\beta}(\so;\R)$, a smooth map $\alpha : \mathcal{V} \to \mathbb{D}_*^k$, and some $\delta>0$, such that the following  holds.

Let $(f,\psi) \in \mathcal{V}$ and consider the domain $\Omega:=f(\mathbb{D})$ together with the boundary datum $g:=(g_0 e^{\im\psi})\circ f^{-1} \in C^{1,\beta}(\partial\Omega;\mathbb{S}^1)$. Then $W^{\Omega}(\cdot,g)$ admits a unique critical point $a\in\Omega_*^k$ satisfying $|f^{-1}(a)-\alpha^0| <\delta$, given by the map $a(f,\psi)=f\left(\alpha(f,\psi)\right)$. Furthermore, $a$ is a nondegenerate critical point of $W^\O(\cdot , g)$.
\end{proposition}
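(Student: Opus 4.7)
The argument is a direct transcription of the proof of Proposition \ref{propndstable}, with the function space $H^{1/2}(\so;\R)$ replaced by $C^{1,\beta}(\so;\R)$ and the open set $V$ replaced by $V_\beta$. The plan is to apply the implicit function theorem in this smoother setting to the map
\begin{equation*}
F^\beta:\mathbb{D}_*^k\times V_\beta\times C^{1,\beta}(\so;\R)\to\R^{2k},\qquad
F^\beta(\alpha,f,\psi):=\nabla_\alpha \widetilde{W}^\beta(\alpha,f,\psi),
\end{equation*}
where $\widetilde{W}^\beta$ is defined in \eqref{barw}.

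First, Lemma \ref{wsmooth} (the $C^{1,\beta}$ part, whose verification is the same as for $\widetilde W$ once one notices that the formulas \eqref{hatwdisc}, \eqref{nablapsi*ag0disc} and the three integral terms in \eqref{wsmooth4} are smooth in the $C^{1,\beta}$ topology on $\psi$) ensures that $F^\beta$ is smooth. Next, since $g_0\in C^{1,\beta}(\so;\so)\subset H^{1/2}(\so;\so)$ and $f_0\in V_\beta\subset V$, the values of $\widetilde{W}^\beta(\cdot,f_0,0)$ and $\widetilde{W}(\cdot,f_0,0)$ coincide on $\mathbb{D}_*^k$; in particular, $\alpha^0$ is a nondegenerate critical point of $\widetilde{W}^\beta(\cdot,f_0,0)$. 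This yields $F^\beta(\alpha^0,f_0,0)=0$ and $D_\alpha F^\beta(\alpha^0,f_0,0)$ invertible.

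The implicit function theorem then provides a neighborhood $\mathcal V\subset V_\beta\times C^{1,\beta}(\so;\R)$ of $(f_0,0)$, a $\delta>0$ and a smooth map $\alpha:\mathcal V\to\mathbb{D}_*^k$ with $\alpha(f_0,0)=\alpha^0$ such that, for $(f,\psi)\in\mathcal V$ and $|\alpha-\alpha^0|<\delta$,
\begin{equation*}
F^\beta(\alpha,f,\psi)=0\iff \alpha=\alpha(f,\psi),
\end{equation*}
and such that $D_\alpha F^\beta(\alpha(f,\psi),f,\psi)$ remains invertible after possibly shrinking $\mathcal V$. As observed at the beginning of the proof of Proposition \ref{propndstable}, $f$ induces a diffeomorphism from $\mathbb{D}_*^k$ onto $\Omega_*^k$, so $a(f,\psi):=f(\alpha(f,\psi))$ is the unique critical point of $W^\Omega(\cdot,g)$ in the region $\{a\in\Omega_*^k:|f^{-1}(a)-\alpha^0|<\delta\}$, and it is nondegenerate.

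There is no substantial obstacle: the only point to check carefully is the smoothness of $\widetilde W^\beta$ in the $C^{1,\beta}$ topology, which is entirely parallel to the $H^{1/2}$ case treated in Lemma \ref{wsmooth}, since the explicit formulas \eqref{hatwdisc}, \eqref{nablapsi*ag0disc} depend smoothly (in fact analytically) on $\alpha$, and the quadratic/linear expressions in $\psi^*$ appearing in \eqref{wsmooth4} are continuous on $C^{1,\beta}(\so;\R)$ by standard elliptic regularity for the harmonic conjugation map.
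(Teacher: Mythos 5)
Your proof is correct and takes exactly the approach the paper intends: the paper itself states that the proof of Proposition \ref{p301} is identical to that of Proposition \ref{propndstable} and is left to the reader, and you have faithfully transcribed that argument to the $C^{1,\beta}$ setting using the $\widetilde{W}^\beta$ part of Lemma \ref{wsmooth} and the implicit function theorem.
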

\noindent
Here, $V_\beta$ is given by \eqref{e3.23a}.
The  proof of Proposition \ref{p301} is identical to the one of Proposition \ref{propndstable} and is left to the reader.

We will need later the following special case of Proposition~\ref{p301}, where $\O$ is fixed.
\begin{corollary}\label{a(g)}
Let $a^0\in\Omega_*^k$ be a nondegenerate critical point of $W(\cdot,g_0)$, for some $g_0\in C^{1,\beta}(\partial\Omega;\S^1)$. Then, for $g$ in a small $C^{1,\beta}$-neighborhood $\mathfrak{A}$ of $g_0$, $W(\cdot, g)$ has, near $a^0$,  a unique nondegenerate critical point $a(g)$. In addition, the map $\psi\mapsto a(g_0 e^{\im\psi})$, defined for $\psi$ in a sufficiently small neighborhood of the origin in $C^{1,\beta}(\partial\Omega;\R)$, is smooth. 
\end{corollary}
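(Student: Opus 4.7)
The plan is to apply Proposition \ref{p301} while fixing the conformal representation. I would first choose a $C^{1,\beta}$ conformal representation $f_0 : \D \to \O$, and set $\alpha^0 := f_0^{-1}(a^0) \in \D_*^k$ and $\tilde{g}_0 := g_0 \circ f_0 \in C^{1,\beta}(\so; \so)$. Via the transformation formula \eqref{transformW(a,g)}, the map $W^\O(f_0(\cdot), g_0)$ differs from $W^\D(\cdot, \tilde g_0)$ by a smooth term depending only on $\alpha$ (not on the boundary datum), so nondegeneracy of $a^0$ as a critical point of $W(\cdot, g_0)$ is equivalent to nondegeneracy of $\alpha^0$ as a critical point of $W^\D(\cdot, \tilde g_0)$. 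This is exactly the hypothesis needed to apply Proposition \ref{p301} to the data $(\O, f_0, \alpha^0, \tilde g_0)$, which yields a neighborhood $\mathcal{V}$ of $(f_0, 0)$ in $V_\beta \times C^{1,\beta}(\so; \R)$ together with a smooth map $(f, \tilde\psi) \mapsto \alpha(f, \tilde\psi) \in \D_*^k$ locating the unique critical point of $\widetilde{W}^\beta(\cdot, f, \tilde\psi)$ near $\alpha^0$.

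To obtain the corollary, I would restrict to the slice $f = f_0$, which gives an open neighborhood $\mathcal{U}$ of the origin in $C^{1,\beta}(\so; \R)$ and a smooth map $\tilde\psi \mapsto \alpha(f_0, \tilde\psi)$. The pullback by $f_0$ is a bounded linear isomorphism
\[
C^{1,\beta}(\po; \R) \ni \psi \longmapsto \tilde\psi := \psi \circ f_0 \in C^{1,\beta}(\so; \R),
\]
under which the boundary data match: $(\tilde g_0 e^{\im \tilde\psi}) \circ f_0^{-1} = g_0 e^{\im \psi}$ on $\po$. Defining
\[
\mathfrak{A} := \{ g_0 e^{\im\psi} \, ; \, \psi \circ f_0 \in \mathcal{U} \}, \qquad a(g_0 e^{\im\psi}) := f_0\bigl(\alpha(f_0, \psi \circ f_0)\bigr),
\]
then produces the desired unique nondegenerate critical point of $W(\cdot, g_0 e^{\im\psi})$ close to $a^0$; uniqueness near $a^0$ follows from uniqueness of $\alpha$ near $\alpha^0$ since $f_0$ is a $C^{1,\beta}$-diffeomorphism of $\overline\D$ onto $\ovo$. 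The claimed smoothness of $\psi \mapsto a(g_0 e^{\im\psi})$ is obtained by composing the continuous linear pullback $\psi \mapsto \psi \circ f_0$ with the smooth dependence of $\alpha$ on $\tilde\psi$ supplied by Proposition \ref{p301}.

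There is no genuine obstacle here: the only content is careful bookkeeping to translate between boundary data on $\so$ (the setting of Proposition \ref{p301}) and boundary data on $\po$ (the setting of the corollary), while the real analytic work — the implicit function theorem applied to $\nabla_\alpha \widetilde{W}^\beta$, together with the smoothness of $\widetilde{W}^\beta$ itself — has already been carried out in Lemma \ref{wsmooth} and Proposition \ref{p301}.
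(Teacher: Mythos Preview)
Your proof is correct and takes essentially the same approach as the paper, which simply declares the corollary to be the special case of Proposition~\ref{p301} with the domain (equivalently the conformal representation $f=f_0$) held fixed. You spell out the bookkeeping --- pulling back $g_0$ and $\psi$ from $\po$ to $\so$ via $f_0$ and noting that the hypothesis on $a^0$ matches the hypothesis of Proposition~\ref{p301} --- that the paper leaves implicit.
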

We note here that Corollary~\ref{a(g)} allows us to define a map 
\be
\label{defT*}
T_*=T^\Omega_{*,a^0,g_0}:\mathfrak{A}\to C^\beta(\partial\Omega;\R),\  
T_*(g) := N^\Omega(a(g),g) = u_{*,a(g),g}\wedge\frac{\partial u_{*,a(g),g}}{\partial\nu}.
\end{equation}
Since $W^\O(\cdot,g)$ does not depend on the class of $g$ modulo $\mathbb{S}^1$, neither do $a(g)$ and $T_\ast$. Moreover, in view of \eqref{e21121} and \eqref{e21122} we have
\begin{equation*}
\int_{\partial\Omega} u_{*,a,g} \wedge \frac{\partial u_{*,a,g}}{\partial\nu}  =\int_{\po} \frac{\partial\Phi_{a,g}}{\partial\tau}=0.
\end{equation*}
We find that
the map $T_*$ induces a map, still denoted $T_\ast$, from  $\mathfrak{A}/\mathbb{S}^1$ into $\dot{C}^\beta(\partial\Omega;\R)$. Here, we define
\bes
\dot{C}^\beta(\partial\Omega;\R):=\left\{ \psi\in C^\beta(\po ; \R);\, \int_{\po}\psi=0\right\}.
\ees
It is also convenient to consider, in a sufficiently small neighborhood $\mathfrak{B}$  of the origin  in $C^ {1,\beta}(\partial\Omega;\R)$,  the maps (both denoted $U_\ast$)
\be
\l{e306}
U_*=U^\Omega_{*,a^0,g_0}:\mathfrak{B} \to \dot{C}^\beta(\partial\Omega;\R),\  
U_*(\psi) = T_*(g_0 e^{\im\psi})
\ee
and 
\begin{equation}\label{U*}
U_*=U^\Omega_{*,a^0,g_0}:\mathfrak{B}/\R \to \dot{C}^\beta(\partial\Omega;\R),\  
U_*(\psi) = T_*(g_0 e^{\im\psi}).
\end{equation}
The above $U_*$'s are smooth. Indeed, this is obtained by combining \eqref{transformN} with \eqref{Ndisc} and with the fact that  $\psi\mapsto a(g_0 e^{\im\psi})$ is smooth.

\section{A uniform version of the Pacard-Rivi\`ere construction of critical points}
\l{prus}
We start by explaining how the results established in this section compare to the existent literature. 

Let us first briefly recall the Bethuel-Brezis-H\'elein analysis of critical points  of the Ginzburg-Landau energy $E_\ve$ with prescribed Dirichlet boundary condition $g\in C^\infty(\po ; \so)$ \cite[Chapter X]{bbh}.  Consider a fixed boundary condition $g\in C^{\infty}(\partial\Omega ; \mathbb{S}^1)$, with $\mathrm{deg}(g,\partial\Omega)=d=\sum_{j=1}^k d_j$. Given a critical point $a\in\Omega_*^k$  of $W(\cdot, d_1, \ldots, d_k, g)$, consider the canonical harmonic map given by \eqref{u*1}. The fact that $a$ is a critical point of $W(\cdot, d_1, \ldots, d_k, g)$ is equivalent to the fact that the harmonic function $H_j$, defined near $a_j$ by
\begin{equation}\label{Hj1}
u_*=e^{\im H_j}\left(\frac{z-a_j}{|z-a_j|}\right)^{d_j},
\end{equation}
satisfies $\nabla H_j(a_j)=0$ \cite[Chapter VII]{bbh}.

%
%
The main result in 
 \cite[Chapter X]{bbh} asserts that, when $\O$ is starshaped critical points of $E_\ve$ converge, as $\ve\to 0$ (up to subsequences and in  appropriate function spaces), to a canonical harmonic map $u_*=u_{*, a, g}$ associated with a critical point $a\in\Omega_*^k$  of $W(\cdot, d_1, \ldots, d_k, g)$.
 
%
Granted this result, one can address the converse: given a critical point $a\in\Omega_*^k$ of $W(\cdot, d_1,\ldots, d_k, g)$, does there exist critical points $u_{\varepsilon}$ of $E_{\varepsilon}$ with prescribed boundary condition $g$, such that $u_{\varepsilon}\longrightarrow u_{*,a,g}$ as $\varepsilon\to 0$? Here we will be interested in the answer provided by  Pacard and  Rivi\`ere \cite{pacardriviere}.

\begin{theorem}[{\cite[Theorem~1.4]{pacardriviere}}] \label{pr} Let $0<\beta,\gamma<1$. 
Assume that $g\in C^{2,\beta}(\partial\Omega ; \mathbb{S}^1)$ and $d_j\in\lbrace\pm 1\rbrace$. Let $a\in\Omega_*^k$ be a nondegenerate critical point of $W(\cdot, d_1,\ldots, d_k, g)$.

Then there exists $\varepsilon_0>0$ such that for every $\varepsilon\in(0,\varepsilon_0)$, there exists $u_{\varepsilon}$ a critical point of $E_{\varepsilon}$ with $u_{\varepsilon}=g$ on $\partial\Omega$, and
\begin{equation}\label{cvpr}
u_{\varepsilon}\longrightarrow u_{*, a(g),g}\quad\text{as }\varepsilon\to 0
\end{equation}
in $C^{2,\gamma}_{loc}(\Omega\setminus\lbrace a_1,\ldots,a_k\rbrace)$.
\end{theorem}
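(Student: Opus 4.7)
The plan is to follow the Pacard--Rivi\`ere scheme of matched asymptotic expansions combined with a Lyapunov--Schmidt reduction, tracking uniform control in $g$ (which is what the section title advertises and what will be needed later when $g$ varies). First I would construct an approximate critical point $u_\varepsilon^{\mathrm{app}}$ by gluing: near each singularity $a_j$, on a ball of radius $\sim 1$ (or $\sim \varepsilon^\sigma$ for some $\sigma\in(0,1)$), use the rescaled standard vortex profile $f_\varepsilon(z):=S(|z-a_j|/\varepsilon)\,[(z-a_j)/|z-a_j|]^{d_j}$, where $S$ solves the entire-plane radial equation $-S''-S'/r+d_j^2S/r^2 = S(1-S^2)$ with $S(0)=0$, $S(\infty)=1$; in the bulk, use the canonical harmonic map $u_{*,a,g}$ from \eqref{u*1}; in an intermediate annulus, interpolate multiplicatively $u_\varepsilon^{\mathrm{app}}=f_\varepsilon\,e^{\im H_j}$ using the local representation \eqref{Hj1} so that the gluing is smooth and the Ginzburg--Landau residue $\mathscr R_\varepsilon := -\Delta u_\varepsilon^{\mathrm{app}} - \varepsilon^{-2} u_\varepsilon^{\mathrm{app}}(1-|u_\varepsilon^{\mathrm{app}}|^2)$ is small in weighted norms (typically of size a positive power of $\varepsilon$).

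Next I would seek the true critical point in the form $u_\varepsilon = u_\varepsilon^{\mathrm{app}} + v_\varepsilon$ with $v_\varepsilon=g$-extending smoothly inside and $v_\varepsilon=0$ on $\partial\Omega$, reducing the problem to $L_\varepsilon v_\varepsilon + Q_\varepsilon(v_\varepsilon) = \mathscr R_\varepsilon$, where $L_\varepsilon$ is the linearization of the Ginzburg--Landau operator at $u_\varepsilon^{\mathrm{app}}$ and $Q_\varepsilon$ collects the quadratic and cubic corrections. The core analytic step is to prove uniform invertibility of $L_\varepsilon$, up to a finite-dimensional obstruction, in $\varepsilon$-adapted weighted H\"older spaces $C^{2,\gamma}_{\mu,\varepsilon}(\Omega)$ that assign weight $(\varepsilon+|z-a_j|)^\mu$ near each $a_j$. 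The obstruction is the approximate kernel generated by the $2k$ infinitesimal translations of the vortices $\partial_{a_j^{(1)}}u_\varepsilon^{\mathrm{app}}$, $\partial_{a_j^{(2)}}u_\varepsilon^{\mathrm{app}}$, reflecting the nontrivial kernel of the linearization of $-\Delta u = u(1-|u|^2)$ on $\R^2$ about the standard vortex; I would handle it by a Lyapunov--Schmidt reduction, picking a family $\{Z_{j,i}^\varepsilon\}$ dual to these directions and solving, for each small perturbation $a'$ of $a$, a projected equation
\begin{equation*}
L_\varepsilon v_\varepsilon + Q_\varepsilon(v_\varepsilon) = \mathscr R_\varepsilon + \sum_{j,i}c_{j,i}\, Z_{j,i}^\varepsilon
\end{equation*}
by a contraction mapping argument, producing $v_\varepsilon(a')$ and Lagrange multipliers $c_{j,i}(a')$ depending smoothly on $a'$, with $v_\varepsilon(a')$ small in the weighted norms.

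The finite-dimensional reduction then requires $c_{j,i}(a')=0$ for all $j,i$. A direct computation (testing against $\partial_{a_j^{(i)}}u_\varepsilon^{\mathrm{app}}$, integrating by parts, and isolating the leading contribution from a small disk around $a_j$) shows that to leading order
\begin{equation*}
c_{j,i}(a') = \kappa\, \partial_i H_j(a_j') + o(1)\quad\text{as }\varepsilon\to 0,
\end{equation*}
for some nonzero universal constant $\kappa$, where $H_j$ is the phase in \eqref{Hj1} attached to the displaced configuration $a'$. Since $\nabla H_j(a_j)=0$ is precisely the equation $\nabla_{a_j} W(a,g)=0$ (as recalled from \cite[Chapter~VII]{bbh}), the nondegeneracy hypothesis that $a$ is a nondegenerate zero of $\nabla W(\cdot,g)$ combined with the implicit function theorem provides, for every small $\varepsilon$, a unique $a'=a(\varepsilon)$ near $a$ with $c_{j,i}(a(\varepsilon))=0$, yielding the desired critical point $u_\varepsilon$. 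Convergence $u_\varepsilon\to u_{*,a,g}$ in $C^{2,\gamma}_{loc}(\Omega\setminus\{a_j\})$ is then a consequence of the weighted smallness of $v_\varepsilon$ together with standard elliptic regularity away from the vortices.

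The main obstacle I expect is the linear step: setting up the weighted spaces and proving uniform invertibility of $L_\varepsilon$ transversally to the approximate kernel, with constants independent of the boundary datum $g$ in a $C^{1,\beta}$-neighbourhood (the uniformity that the section title suggests is needed). This requires a blow-up analysis around each $a_j$ to rule out concentration of putative bounded sequences in the kernel, together with an analysis of the ``outer'' Laplace-type operator on $\Omega$ with a Dirichlet condition on $\partial\Omega$ and matching conditions across the transition annuli, keeping track of the $g$-dependence only through its $C^{2,\beta}$ norm and the regular part $R_{a,g}$.
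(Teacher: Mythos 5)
Your proposal is plausible as a proof scheme, but it takes a genuinely different route from the one in Pacard--Rivi\`ere, which is what this paper actually relies on (Theorem~\ref{pr} is cited, not reproved; the paper's own work is the uniform variant Theorem~\ref{pru}, obtained by inspecting the Pacard--Rivi\`ere chapters). You build the approximate solution, identify an approximate $2k$-dimensional kernel generated by vortex translations, and then perform a Lyapunov--Schmidt reduction: solve a projected fixed-point equation with Lagrange multipliers $c_{j,i}(a')$, expand $c_{j,i}(a')=\kappa\,\partial_i H_j(a_j')+o(1)$, and use the nondegeneracy of $W(\cdot,g)$ via the implicit function theorem to kill the reduced system. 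This is the del~Pino--Kowalczyk--Musso-style argument, which the paper acknowledges as an alternative construction. Pacard--Rivi\`ere instead prove that the \emph{full} linearized operator $\widetilde{\mathcal L}_\varepsilon$ (in $\varepsilon$-adapted weighted H\"older spaces) is an isomorphism, with inverse bounded uniformly as $\varepsilon\to0$, so no finite-dimensional obstruction has to be projected out and no parameter $a'$ is moved a posteriori. Their invertibility proof glues an ``interior'' analysis near each $a_j$ (where the linearization at the radial profile is studied) with an ``exterior'' analysis on $\Omega\setminus\bigcup\mathbb D_\zeta(a_j)$, and the matching is done through Dirichlet-to-Neumann operators on the circles $C(\zeta,a_j)$; the nondegeneracy of $a$ as a critical point of $W(\cdot,g)$ is used precisely to prove that the limiting difference $DN_{\mathrm{int},0}-DN_{\mathrm{ext},0}$ is invertible. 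So the nondegeneracy hypothesis enters both arguments, but at different places: in yours it resolves the reduced $2k$-dimensional system; in theirs it underlies a purely linear isomorphism statement, after which the nonlinear problem is solved by a single contraction with no free parameters. The Pacard--Rivi\`ere route is what makes the paper's subsequent uniformity analysis (Sections~\ref{prus}--\ref{sconv}) tractable, since one can check term by term that the weighted estimates and the DN-operator convergence depend on $g$ only through $\|g\|_{C^{1,\beta}}$ and the quantities in Corollary~\ref{a(g)}; your scheme could in principle be made uniform too, but the bookkeeping of the Lagrange multipliers and the $a'$-dependence would have to be controlled in $g$ as well, which is a separate task.
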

The purpose of this section is to establish a variant of Theorem \ref{pr}, in which $g$ is assumed to be merely $C^{1,\beta}$ and is not fixed anymore. In addition, we will obtain a uniform existence theorem, and uniform convergence rate. 
More specifically, we fix integers $d_1,\ldots, d_k$. Since these integers do not depend on the boundary datum $g$ we consider, we will omit the dependence of $W$ with respect to $d_1,\ldots, d_k$: we write $W(\cdot, g)$ instead of $W(\cdot, d_1, \ldots, d_k, g)$. We consider $a_0\in\Omega_*^k$ a nondegenerate critical point of the renormalized energy $W(\cdot, g_0)$ associated with $g_0\in C^{1,\beta}(\partial\Omega ; \mathbb{S}^1)$. By Corollary~\ref{a(g)}, we know that, for $g$ in a small $C^{1,\beta}$-neighborhood $\mathfrak{A}$ of $g_0$, $W(\cdot, g)$ has, near $a_0$,  a unique nondegenerate critical point $a(g)$. 

%
In this section, we establish the following variant of Theorem \ref{pr}.
\begin{theorem}\label{pru}
Let $0<\beta,\gamma<1$. Let $g_0\in C^{1,\beta}(\po ; \so)$. Let $d_1,\ldots, d_k\in \{ -1, 1\}$. Let $a_0$ be a nondegenerate critical point of $W(\cdot, g_0)$. 
Then there exist $\delta>0$ and $\varepsilon_0>0$ such that the following holds. For every $g\in C^{1,\beta}(\partial\Omega ;\mathbb{S}^1)$ satisfying $\|g-g_0\|_{C^{1,\beta}}\leq \delta$, and for every $\varepsilon\in (0,\varepsilon_0)$, there exists $u_{\varepsilon}=u_{\varepsilon,g}$ a critical point of $E_{\varepsilon}$ with prescribed boundary condition $g$, such that
\begin{equation}\label{cvpru}
u_{\varepsilon,g}\longrightarrow u_{*, a(g), g}\quad\text{as }\varepsilon\to 0
\end{equation}
in $C^{2,\gamma}_{loc}(\Omega\setminus\lbrace a_1,\ldots,a_k\rbrace)$.
\end{theorem}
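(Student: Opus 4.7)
The plan is to mimic the Pacard-Rivi\`ere construction behind Theorem~\ref{pr}, but to track the dependence of every estimate on the boundary datum $g$ so as to obtain uniform control for $g$ in a small $C^{1,\beta}$-neighborhood of $g_0$. The map $g\mapsto a(g)$ is already smoothly defined by Corollary~\ref{a(g)}, and nondegeneracy is an open condition, so for $g$ close to $g_0$ the singularities $a_j(g)$ stay uniformly separated from each other and from $\partial\Omega$, and the Hessian of $W(\cdot,g)$ at $a(g)$ is uniformly invertible.

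First, for each admissible $g$ and each small $\varepsilon$, I build an approximate solution $u_{app}=u_{app,\varepsilon,g}$: start from the canonical harmonic map $u_{*,a(g),g}$, and near each $a_j(g)$ replace the singular factor $(z-a_j(g))/|z-a_j(g)|$ by the standard radial Ginzburg-Landau profile $f_\varepsilon(|z-a_j(g)|)(z-a_j(g))/|z-a_j(g)|$, matched smoothly to $u_{*,a(g),g}$ on an annulus of scale $\varepsilon^\alpha$ for some fixed $0<\alpha<1$. Since the possible degrees $d_j\in\{\pm 1\}$ are those for which the radial profile exists, and since $(a_j(g))$ and $u_{*,a(g),g}$ depend continuously on $g$ in $C^{1,\beta}$-norm away from the vortices, the cost and the error of this construction are uniform in $g$.

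Seeking $u_\varepsilon=u_{app}+v_\varepsilon$ transforms the Ginzburg-Landau equation into a fixed point problem
\begin{equation*}
\mathcal{L}_{\varepsilon,g}\,v \;=\; \mathcal{E}_{\varepsilon,g} + \mathcal{Q}_{\varepsilon,g}(v),
\end{equation*}
where $\mathcal{L}_{\varepsilon,g}$ is the linearization at $u_{app}$, $\mathcal{E}_{\varepsilon,g}$ is the consistency error, and $\mathcal{Q}_{\varepsilon,g}$ is quadratic. Working in weighted H\"older spaces tailored to the vortex scale (as in \cite{pacardriviere}), I would prove: a size bound $\|\mathcal{E}_{\varepsilon,g}\|=O(\varepsilon^\kappa)$ uniform in $g$; a contraction estimate for $\mathcal{Q}_{\varepsilon,g}$ on a small ball; and the key invertibility $\|\mathcal{L}_{\varepsilon,g}^{-1}\|\le C|\log\varepsilon|$ uniform in $g$. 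A Banach fixed-point argument then yields $v_{\varepsilon,g}$ with $\|v_{\varepsilon,g}\|=o(1)$ uniformly in $g$. The convergence \eqref{cvpru} in $C^{2,\gamma}_{loc}$ follows by interior elliptic regularity on compact subsets of $\Omega\setminus\{a_1(g),\ldots,a_k(g)\}$.

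The main obstacle is twofold. First, \cite{pacardriviere} assumes $g\in C^{2,\beta}$ so that $u_{*,a,g}\in C^{2,\beta}$ up to $\partial\Omega$; under the weaker hypothesis $g\in C^{1,\beta}$ the canonical harmonic map is only $C^{1,\beta}$ near the boundary, so every Schauder estimate that in \cite{pacardriviere} is carried out in $C^{2,\beta}$ up to $\partial\Omega$ must be downgraded to $C^{1,\beta}$, and the boundary-weighted norms adapted accordingly. Second, and more delicate, is proving that the invertibility of $\mathcal{L}_{\varepsilon,g}$ modulo the translation modes, and the norm of its inverse, are genuinely uniform in $g$ rather than just valid for each fixed $g$. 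The strategy I would use is the standard decomposition into "local" radial modes near each vortex — whose spectral gap on $\R^2$ is independent of $g$ — and a "global" finite-dimensional obstruction whose invertibility is governed by the Hessian of $W(\cdot,g)$ at $a(g)$. The latter Hessian depends continuously on $g$ in $C^{1,\beta}$ by the smoothness of $\widetilde W$ from Lemma~\ref{wsmooth}, so uniformity on a $C^{1,\beta}$-neighborhood of $g_0$ is automatic from the pointwise nondegeneracy at $g_0$.
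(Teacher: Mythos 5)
Your overall plan is the same as the paper's: follow the Pacard--Rivi\`ere construction step by step, identify where $g$ enters the estimates, and make everything uniform over a small $C^{1,\beta}$-neighborhood of $g_0$. You correctly single out the two real issues (the loss of one derivative of regularity at the boundary, and the uniform-in-$g$ invertibility of the linearized operator), and your resolution of the second one --- spectral gap of the local radial modes plus continuity in $g$ of the finite-dimensional obstruction governed by $\nabla^2_a W(\cdot,g)$ at $a(g)$, via Lemma~\ref{wsmooth} and Corollary~\ref{a(g)} --- matches the paper's treatment of the Dirichlet-to-Neumann operators $DN_{int,0}-DN_{ext,0}$ in \cite[Chapter~6]{pacardriviere}.

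Where the proposal has a genuine gap is on the first issue. Saying that \enquote{every Schauder estimate carried out in $C^{2,\beta}$ must be downgraded to $C^{1,\beta}$} does not actually resolve it, and a naive downgrade fails. The proof in \cite{pacardriviere} uses the estimate $\sup_{\Omega_\sigma}|\nabla^k\xi|\le c\,\varepsilon^{2-k}$ for $k=1,2$ (their Lemma~5.2), which requires $g\in C^{2,\beta}$ and in particular controls $\nabla^2\xi$ up to $\partial\Omega$; this second-order control is what yields the global consistency bound $\|\mathcal N_\varepsilon(\widetilde u_\varepsilon)\|_{C^\beta(\Omega_\sigma)}\le c\,\varepsilon^{1-\beta}$. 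With $g\in C^{1,\beta}$ one has no such second-order bound, and simply weakening the norm does not recover the error estimate. The actual fix is a quantitative interpolation: the paper proves, for the solution $\xi$ of \eqref{eqxi}, that
\begin{equation*}
\sup_{\Omega_\sigma}|\nabla\xi|\le c\,\varepsilon,\qquad |\nabla\xi|_{\beta,\Omega_\sigma}\le c\,\varepsilon^{1-\beta},
\end{equation*}
by applying a $C^{1,\beta}$-variant of the Bethuel--Brezis--H\'elein interpolation lemma (stated as Lemma~\ref{estimlemma} in the Appendix) to $w=\xi-1$, combining $\|w\|_{L^\infty}$, $\|\Delta w\|_{L^\infty}$, and the $C^{1,\beta}$ boundary data. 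Then the boundary consistency estimate follows from the factorisation $\mathcal N_\varepsilon(\widetilde u_\varepsilon)=2\,\nabla u_*\cdot\nabla\xi$ in $\Omega_\sigma$ together with the uniform bound $\|\nabla u_*\|_{C^\beta(\Omega_\sigma)}\le C$ coming from $\|H\|_{C^{1,\beta}(\overline\Omega)}\le C$. This interpolation step is the mathematical crux of the $C^{1,\beta}$ reduction and of the uniformity, and it is missing from your sketch. Without it you cannot close the fixed-point scheme, because the size of the error $\mathcal E_{\varepsilon,g}$ near $\partial\Omega$ is not under control.
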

As announced,  the difference with  Theorem 1.4 in  \cite{pacardriviere} is that we merely assume  that $g\in C^{1,\beta}$; in addition,   we prove that $\ve_0$ can be chosen independent of $g$.   
Theorem~\ref{pru} allows us to define a map $F_{\varepsilon} :g\mapsto u_{\varepsilon,g}$ for every $\varepsilon\in (0,\varepsilon_0)$.

Theorem~\ref{pru} is obtained by following the proof  of Theorem~\ref{pr} in \cite{pacardriviere}. All we have to check (and we will do in what follows) is that the estimates in \cite{pacardriviere}  are uniform in $g$; we also have to modify some arguments relying on the regularity assumption  $g\in C^{2,\beta}$.  

\begin{proof}[Proof of Theorem \ref{pru}]
For the convenience of the reader, we recall the main steps of the proof of Theorem~\ref{pr} in \cite{pacardriviere}, and examine the crucial points where the estimates  depend on $g$, respectively  where the regularity of $g$ plays a role.

The general strategy in \cite{pacardriviere} is to construct an \enquote{approximate solution}  $\widetilde{u}_\varepsilon$ of the Ginzburg Landau equation
\begin{equation}\label{Neps}
\mathcal{N}_{\varepsilon}(u)= 0,\quad\text{where }\mathcal{N}_{\varepsilon}(u):=\Delta u + \frac{u}{\varepsilon^2}(1-|u|^2),
\end{equation}
using the fairly precise knowledge we have of the form of solutions for small $\varepsilon$. Then, using a fixed point argument, one can prove that some perturbation of $\widetilde{u}_{\varepsilon}$ is in fact an exact solution of \eqref{Neps}. The main difficulty lies in finding the good functional setting that makes the linearized operator $L_\varepsilon = D\mathcal{N}_\varepsilon$ around $\widetilde{u}_\varepsilon$  invertible, uniformly with respect to $\varepsilon$. This is achieved in \cite{pacardriviere} in the frame of appropriate weighted Hölder spaces.

In \cite{pacardriviere} the proof of Theorem~\ref{pr} is divided into five chapters: Chapters 3 through 7. In what follows, we detail the content of these chapters and explain how to adapt the arguments for the need of Theorem~\ref{pru}.

\subsection*{Chapters 3 and 4 in \cite{pacardriviere}}

 \cite[Chapter~3]{pacardriviere} is devoted to the study of the radially symmetric solution $u(r e^{\im\theta})=f(r)e^{\im\theta}$ of the Ginzburg-Landau equation in  $\C$ satisfying $\lim_{r\to\infty}f(r)=1$. In particular,  \cite[Chapter~3]{pacardriviere} characterizes the bounded solutions  of the linearized equation about this radial solution. This characterization is used in \cite[Chapter~4]{pacardriviere} in the  study of the mapping properties of the linearization of the Ginzburg-Landau operator (at the radial solution) in the punctured unit disc $\mathbb{D}\setminus\lbrace 0\rbrace$. In particular,  it is shown that the linearized operator is invertible between appropriate weighted Hölder spaces.
 
These two chapters (3 and 4) are independent of the boundary condition $g$, so that they can be used with no changes in the proof of Theorem \ref{pru}.

\subsection*{Chapter 5 in \cite{pacardriviere}}

The next step, in \cite[Chapter~5]{pacardriviere}, consists in constructing and estimating the approximate solution $\widetilde{u}_{\varepsilon}$. This approximate solution looks like $u_*=u_{*,g,a(g)}$ away from its zeros (which are close to the singularities of $u_*$), and like the radial solution studied in \cite[Chapter~3]{pacardriviere} near its zeros. 
Since $\widetilde{u}_\varepsilon$ is built upon $u_*$, the estimates satisfied by $\widetilde{u}_\varepsilon$ involve $u_*$, and thus $g$.

More specifically, in \cite[Chapter~5]{pacardriviere}, various quantities are estimated in terms of constants $c(u_*)$ depending on $u_*$ and its derivatives. An inspection of the proofs there combined with \eqref{u*1} shows that these constants depend only on   $a(g)$, on the harmonic function $H=H_g$ and on the  derivatives of $H_g$.

We claim that the constants $c(u_*)$ can be chosen independent of $g$ satisfying 
\be
\l{e11151}\|g-g_0\|_{C^{1,\beta}(\po)}\leq\delta.
\ee 
Here, $\delta$ is sufficiently small in order to have the conclusion of Corollary~\ref{a(g)}.
Indeed, the key observation is that there exists a constant $C>0$ independent of $g$ such that
\begin{equation}\label{estimH}
\|H\|_{C^{1,\beta}(\overline{\Omega})}\leq C;
\end{equation}
this follows from the fact that  $H$ is harmonic and $\|H\|_{C^{1,\beta}(\partial\Omega)}\leq C$. 

In particular, we have
\begin{equation}\label{interiorestimH}
\| H\|_{C^k(\omega)}\leq C(k,\omega)\quad\text{for } k\in\N\text{ and }\overline{\omega}\subset\Omega.
\end{equation}
Estimate \eqref{interiorestimH} implies that all the interior estimates in \cite[Chapter~5]{pacardriviere} are satisfied uniformly in  $g\in C^{1,\beta}$ satisfying \eqref{e11151}. This settles the case of estimates (5.8), (5.9), (5.33), (5.42) and (5.43) in \cite[Chapter~5]{pacardriviere}.

It remains to consider the global and boundary estimates (5.29), (5.32) and (5.41) in \cite{pacardriviere}. These estimates rely on bounds on the solution  $\xi$ of the problem
\begin{equation}\label{eqxi}
\left\lbrace
\begin{array}{rll}
\Delta\xi-|\nabla u_*|^2\xi+\d\frac{1-\xi^2}{\varepsilon^2}\xi & = 0 &\text{in }\Omega_{\delta/2}\\
\xi & = S_{\varepsilon}+w_{j,r} & \text{on }\partial \mathbb{D}_{\delta/2}(a_j)\\
\xi & = 1 & \text{on }\partial\Omega
\end{array}.
\right.
\end{equation}
Here,  $\Omega_\sigma:=\Omega\setminus\bigcup_j\mathbb{D}_\sigma (a_j)$ (for sufficiently small $\sigma>0$), and $\delta:=\varepsilon^2$. The auxiliary function  $S_{\varepsilon}$ is independent of $g$ and is defined  in  \cite[Section 3.6]{pacardriviere}.  Finally,  $w_{j,r}$, defined in \cite[(5.7)]{pacardriviere}, depends only $a(g)$ and on  the restriction of $H_j$ to compacts of $\O$; therefore, the estimates involving $w_{j,r}$ are uniform in $g$. 
%

In \cite[Lemma~5.1]{pacardriviere} the following estimates (numbered as (5.29) in \cite{pacardriviere}) are shown to hold:
\begin{align}
\label{estimxi1}
1-c\varepsilon^2  \leq\xi \leq 1 &\quad\quad \text{in }\Omega_\sigma,\\
\label{estimxi2}
1-c\varepsilon^2r_j^{-2}  \leq \xi  \leq 1 
&\quad\quad \text{in }\mathbb{D}_\sigma(a_j)\setminus\mathbb{D}_{\delta/2}(a_j),\\
\label{estimxi3}
|\nabla^k\xi|  \leq c_k\varepsilon^2 r_j^{-2-k} & \quad\quad\text{in }\mathbb{D}_{2\sigma}(a_j)\setminus\mathbb{D}_\delta(a_j)\quad (k\geq 1).
\end{align}
Here $\sigma>0$ is fixed, and $r_j=r_j(x)$ denotes the distance from $x$ to $a_j$. Estimates \eqref{estimxi2} and \eqref{estimxi3} are interior estimates, and therefore they  hold uniformly in  $g\in C^{1,\beta}$ satisfying \eqref{e11151}, as explained above. 
We claim that the same conclusion applies to \eqref{estimxi1}. Indeed, an inspection of the proof in \cite{pacardriviere} shows that the constant $c$ in  \eqref{estimxi1} is controlled by  $\sup_{\Omega_\sigma}|\nabla u_*|$. The latter quantity is uniformly bounded, thanks to \eqref{estimH}, whence the conclusion.
%
%
This settles the case of the estimate (5.29) in \cite{pacardriviere}.

We next turn to the estimate (5.32) in \cite[Lemma~5.2]{pacardriviere}. Under the assumption that $g\in C^{2,\beta}$, this lemma asserts that
\begin{equation}\label{estimxi4}
\sup_{\Omega_\sigma} |\nabla^k\xi| \leq c \varepsilon^{2-k}, \quad k=1,2.
\end{equation}
In our case, we only assume $g\in C^{1,\beta}$. The corresponding estimates are given by our next result.
\begin{lemma}\label{estimxi}
Assume that \eqref{e11151} holds. Then we have
\begin{equation}\label{estimxi5}
\sup_{\Omega_\sigma}|\nabla\xi|\leq c\varepsilon \text{ and }
|\nabla\xi|_{\beta,\Omega_\sigma}\leq c\varepsilon^{1-\beta}.
\end{equation}
\end{lemma}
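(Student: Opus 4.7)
Set $v := 1-\xi$. Using the equation for $\xi$, one checks that $v$ satisfies the linear elliptic equation
\begin{equation*}
\Delta v - \frac{\xi(1+\xi)}{\varepsilon^2}\, v = -|\nabla u_*|^2\,\xi \quad\text{in } \Omega_{\delta/2},
\end{equation*}
with $v = 0$ on $\partial\Omega$, $v = O(\varepsilon^2)$ on the inner circles $\partial\mathbb{D}_{\delta/2}(a_j)$ (from the definition of $S_\varepsilon$ and $w_{j,r}$), and $0 \leq v \leq c\varepsilon^2$ on $\Omega_\sigma$ by \eqref{estimxi1}. The key observation is that the right-hand side belongs to $C^\beta(\Omega_\sigma)$ with a norm bounded uniformly in $g$ satisfying \eqref{e11151}: indeed, thanks to \eqref{estimH}, $H = H_g$ is uniformly bounded in $C^{1,\beta}(\overline\Omega)$; since $a(g)$ stays in a compact subset of $\Omega$ (by Corollary \ref{a(g)}) and the explicit factors in \eqref{u*1} are smooth on $\Omega_\sigma$, we obtain a uniform bound on $\||\nabla u_*|^2\|_{C^\beta(\Omega_\sigma)}$.

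The plan is now to derive \eqref{estimxi5} via a rescaling argument combined with uniform Schauder $C^{1,\beta}$ estimates. Given $x_0 \in \overline{\Omega_\sigma}$, define $v_{x_0}(y) := v(x_0 + \varepsilon y)/\varepsilon^2$ on $D_{x_0} := B_1 \cap \varepsilon^{-1}(\Omega - x_0)$. A direct calculation shows that
\begin{equation*}
\Delta v_{x_0}(y) - \xi(1+\xi)(x_0+\varepsilon y)\, v_{x_0}(y) = -|\nabla u_*(x_0+\varepsilon y)|^2\,\xi(x_0+\varepsilon y) \quad\text{in } D_{x_0},
\end{equation*}
with bounded potential, with right-hand side bounded in $C^\beta$ (the Hölder seminorm in $y$ actually carrying an extra factor $\varepsilon^\beta$), and with $\|v_{x_0}\|_{L^\infty(D_{x_0})} \leq c$ from \eqref{estimxi1}. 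If $x_0$ is at distance $\geq\varepsilon$ from $\partial\Omega$, interior Schauder estimates on $B_{1/2}$ yield $\|v_{x_0}\|_{C^{1,\beta}(B_{1/2})} \leq C$, with $C$ independent of $\varepsilon$ and of $g$. Otherwise, $v_{x_0}$ vanishes on the rescaled outer boundary, which is still $C^{1,\beta}$ with uniformly controlled norm (its seminorm in the rescaled coordinates is of order $\varepsilon^\beta$ times that of $\partial\Omega$), and boundary Schauder estimates give the same conclusion on the corresponding half-ball.

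Unscaling yields $|\nabla v(x_0)| \leq C\varepsilon$ and $[\nabla v]_{C^\beta(B_{\varepsilon/2}(x_0)\cap\Omega)} \leq C\varepsilon^{1-\beta}$ with constants independent of $x_0$, $\varepsilon$ and $g$. Letting $x_0$ vary over $\overline{\Omega_\sigma}$ gives the global $\sup$ bound. For the global Hölder seminorm, pairs $(x,x')$ with $|x-x'| \leq \varepsilon/2$ lie in a common local ball and are controlled by the previous estimate; for $|x-x'| > \varepsilon/2$ one simply writes
\begin{equation*}
\frac{|\nabla v(x) - \nabla v(x')|}{|x-x'|^\beta} \leq \frac{2C\varepsilon}{(\varepsilon/2)^\beta} \leq C'\varepsilon^{1-\beta}.
\end{equation*}

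The main technical point is to ensure that all Schauder constants can be taken uniform in $g$ and $\varepsilon$. Uniformity in $g$ reduces, via \eqref{u*1}, to the uniform bound \eqref{estimH}; uniformity in $\varepsilon$ is built into the rescaling (which normalizes the zero-order term $1/\varepsilon^2$ to order one and preserves the $C^{1,\beta}$ character of $\partial\Omega$ with small seminorm). With this verified, the estimates \eqref{estimxi5} follow.
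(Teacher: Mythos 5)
Your proposal is correct, and it ends up proving essentially the same interpolation-type bound by a direct rescaling argument rather than by invoking the paper's auxiliary lemma. The paper applies Lemma~\ref{estimlemma} from the Appendix (a $C^{1,\beta}$ variant of the Bethuel--Brezis--H\'elein interpolation inequality) to $w=\xi-1$: since $\|w\|_{L^\infty}\lesssim\varepsilon^2$ by \eqref{estimxi1}, $\|\Delta w\|_{L^\infty}\lesssim 1$ from the equation \eqref{eqxi}, and the boundary data of $w$ is uniformly controlled, the two conclusions of that lemma immediately give $\varepsilon^{2\cdot 1/2}=\varepsilon$ and $\varepsilon^{2(1/2-\beta/2)}=\varepsilon^{1-\beta}$. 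Your route instead normalizes $v_{x_0}(y)=(1-\xi)(x_0+\varepsilon y)/\varepsilon^2$ so the zero-order coefficient becomes $O(1)$, runs interior/boundary Schauder, and unscales --- which amounts to reproving the Appendix lemma in the special case where the balancing scale $\bigl(\|w\|_\infty/\|\Delta w\|_\infty\bigr)^{1/2}$ happens to equal $\varepsilon$. The paper's packaging is a touch cleaner (the general lemma hides the casework between the interior and boundary balls and works without exploiting the structure of the zero-order term), while your inlined version makes the role of the length scale $\varepsilon$ and the $C^{1,\beta}$ rescaling of $\partial\Omega$ completely explicit. One minor remark: the detour through a $C^\beta$ bound on the right-hand side is not needed, and in fact your argument never fully justifies it (you bound $|\nabla u_*|^2$ in $C^\beta$ but not the factor $\xi$, which would be circular). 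This is harmless because the $C^{1,\beta}$ Schauder estimate you invoke only requires the source and the zero-order coefficient to be bounded in $L^\infty$, and those bounds you do have, uniformly in $g$ and $\varepsilon$.
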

Here,  $|\cdot|_{\beta,\Omega_\sigma}$ denotes the $C^{\alpha}$ semi-norm in $\Omega_\sigma$:
\bes
|u|_{\alpha,\Omega_\sigma}:=\sup\left\{ \frac{|u(x)-u(y)|}{|x-y|^\beta};\, x, y\in \Omega_\sigma\right\}.
\ees
\begin{proof}
We apply Lemma~\ref{estimlemma} in the Appendix with $w=\xi-1$ in $G:=\Omega_{\sigma/2}$, and find that
\begin{gather}
\l{fag1}
\sup_{\Omega_\sigma}|\nabla\xi| \leq C\left( \|w\|_{L^\infty(\Omega_\sigma)}^{1/2}\|\Delta w\|_{L^\infty(\Omega_\sigma)}^{1/2}+\| w\|_{C^{1,\beta}(\partial\Omega_\sigma)}\right),\\
\l{fag2}
|\nabla\xi|_{\beta,\Omega_\sigma} \leq C \left(\|w\|_{L^\infty(\Omega_\sigma)}^{1/2-\beta/2}\|\Delta w\|_{L^\infty(\Omega_\sigma)}^{1/2+\beta/2} +\| w\|_{C^{1,\beta}(\partial\Omega_\sigma)}\right).
\end{gather}
The conclusion then follows by combining \eqref{fag1}-\eqref{fag2} with the equation \eqref{eqxi} and with with estimates \eqref{estimxi1} and \eqref{estimxi3}.
\end{proof}
Finally, we examine  estimate (5.41) in the last section of \cite[Chapter~5]{pacardriviere}; this   is a global estimate for $\mathcal{N}_{\varepsilon}(\widetilde{u}_\varepsilon)$. Recall here that $\mathcal{N}_{\varepsilon}$ is the Ginzburg-Landau operator, and that  $\widetilde{u}_\varepsilon$ is the approximate solution of  \eqref{Neps} constructed in \cite[Chapter~5]{pacardriviere}. The estimate \cite[(5.41)]{pacardriviere} involves an  interior estimate and a boundary estimate. As above, the interior estimate is  settled with the help of \eqref{interiorestimH}. We now turn to the boundary estimate, which is the following: 
\begin{equation}\label{estimN}
\|\mathcal{N}_\varepsilon(\widetilde{u}_\varepsilon)\|_{C^{\beta}(\Omega_\sigma)}\leq c\varepsilon^{1-\beta}.
\end{equation}
The proof of \eqref{estimN} in \cite{pacardriviere} relies on the estimates \eqref{estimxi4} above (see\cite[Proof of Lemma~5.2]{pacardriviere}). In our case, \eqref{estimxi4} need not hold, since we only assume that $g\in C^{1,\beta}$. However, we still obtain \eqref{estimN} as follows. 
We note that
\begin{equation}\label{formuleN}
\mathcal{N}_\varepsilon(\widetilde{u}_\varepsilon) = 2\nabla u_*\cdot\nabla\xi \quad\text{in }\Omega_\sigma
\end{equation}
(this is formula (5.46) in \cite{pacardriviere}). By \eqref{formuleN}, we have
\begin{equation}\label{estimN2}
\|\mathcal{N}_\varepsilon(\widetilde{u}_\varepsilon)\|_{C^{\beta}(\Omega_\sigma)}\leq
c\|\nabla u_*\|_{C^{\beta}(\Omega_\sigma)} \|\nabla\xi\|_{C^{\beta}(\Omega_\sigma)}.
\end{equation}
We obtain \eqref{estimN} as a consequence of \eqref{estimH} and of Lemma~\ref{estimxi}.

As a conclusion of this inspection, we find that  all the estimates in  \cite[Chapter~5]{pacardriviere} are uniform in $g$ satisfying \eqref{e11151}; the arguments there need only minor changes. 
The most relevant change is that  \cite[Lemma~5.2]{pacardriviere} has to be replaced by Lemma~\ref{estimxi}.

\subsection*{Chapter 6 in \cite{pacardriviere}}

We now turn to \cite[Chapter~6]{pacardriviere}, which deals with the conjugate linearized operator $\widetilde{\mathcal{L}}_\varepsilon$ around the approximate solution. The main result of this chapter is \cite[Theorem~6.1]{pacardriviere}, which  states that $\widetilde{\mathcal{L}}_\varepsilon$ is invertible for $\varepsilon\in (0,\varepsilon_0)$, with the norm of its inverse bounded independently of $\varepsilon$. In order to adapt this theorem to our situation, we need to check that this $\varepsilon_0$, and the bound on $\widetilde{\mathcal{L}}_\varepsilon^{-1}$, can be chosen independently of $g$ satisfying \eqref{e11151}.

The proof of \cite[Theorem~6.1]{pacardriviere} is divided into three parts:
\begin{enumerate}[(a)]
\item The \enquote{interior} problem, consisting in the study of the linearized operator $\widetilde{\mathcal{L}}_\varepsilon$ near the zeros of $\widetilde u_\ve$ \cite[Section~6.2]{pacardriviere}.
\item The \enquote{exterior} problem, requiring the study of the linearized operator $\widetilde{\mathcal{L}}_\varepsilon$ away from the zeros of $\widetilde u_\ve$ \cite[Section~6.3]{pacardriviere}.
\item The  study of the Dirichlet-to-Neumann mappings \cite[Section~6.4]{pacardriviere}. (These mappings are used later in order to  \enquote{glue}  the two first steps together.)
\end{enumerate}

The interior and the exterior problem rely on the estimates obtained in \cite[Chapter~5]{pacardriviere}. An inspection of the proofs shows that all the estimates obtained there are uniform in $g$, with one possible exception: the estimates in \cite[Proposition~6.2]{pacardriviere}. Indeed, these estimates rely on \cite[Lemma~5.2]{pacardriviere}, and more specifically on \eqref{estimxi4} (which does not hold in our setting). However, a closer look to  \cite[Proof of Proposition 6.2]{pacardriviere} shows that the conclusion of \cite[Proposition 6.2]{pacardriviere} still holds if we replace \eqref{estimxi4} by Lemma~\ref{estimxi}. 
In conclusion, the first two steps can be carried out with uniform estimates, provided \eqref{e11151} holds.

The third step (Dirichlet-to-Neumann mappings) requires more care. In \cite[Section~6.4]{pacardriviere}, the following  two operators are defined, for fixed small $\zeta>0$ and for sufficiently  small $\varepsilon$:
\begin{equation}\label{DtN}
DN_{int,\varepsilon},\: DN_{ext,\varepsilon} : \prod_{j=1}^k C^{2,\beta}\left(C(\zeta, a_j)\right)\longrightarrow \prod_{j=1}^k C^{1,\beta}\left(C(\zeta, a_j)\right).
\end{equation}
(These are the  interior and exterior Dirichlet-to-Neumann mappings.) The crucial result in part (c) is \cite[Proposition 6.5]{pacardriviere}, which states the existence of some  $\varepsilon_0$ such that $DN_{int,\varepsilon}-DN_{ext,\varepsilon}$ is an isomorphism for $\varepsilon\in (0,\varepsilon_0)$. The  proof of this fact goes as follows. First the convergence
\begin{equation}\label{cvDtN}
DN_{int,\varepsilon}-DN_{ext,\varepsilon}\longrightarrow DN_{int,0}-DN_{ext,0}\quad\text{as }\varepsilon\to 0
\end{equation}
is shown to hold in operator norm. The proof of  \eqref{cvDtN} relies on the interior estimate \eqref{interiorestimH}. Therefore, the convergence in \eqref{cvDtN} is uniform in $g$ satisfying \eqref{e11151}. 

We now return to the proof in \cite[Chapter 6]{pacardriviere}. Once \eqref{cvDtN} is established, it remains to prove that the limiting operator $DN_{int,0}-DN_{ext,0}$ is invertible. This is done in \cite[Proposition 6.5]{pacardriviere}; this is where the nondegeneracy of $a$ as a critical point of $W(\cdot, g)$ comes into the picture. In order to extend the conclusion of \cite[Proposition 6.5]{pacardriviere} to our setting, and to obtain a uniform bound for the inverse of $DN_{int,\varepsilon}-DN_{ext,\varepsilon}$, it suffices to check that $DN_{int,0}-DN_{ext,0}$ depends continuously on $g$. Indeed, this will lead to a uniform bound for the inverse of $DN_{int,\varepsilon}-DN_{ext,\varepsilon}$ provided $\ve$ is sufficiently small, uniformly in $g$ satisfying \eqref{e11151} (possibly with a smaller $\delta$).
%
%
For this purpose, we examine the formulas of $DN_{ext,0}$ and $DN_{int,0}$. 
The definition of $DN_{ext,0}$ is given in \cite[Proposition~6.4]{pacardriviere}, and it turns out that  that $DN_{ext,0}$ does not depend on $g$. As for $DN_{int,0}$, it is a diagonal operator of the form
\begin{equation}\label{DNint}
DN_{int,0}(\phi_1,\ldots,\phi_k)=\left(DN^1_{int,0}(\phi_1),\ldots , DN^k_{int,0}(\phi_k)\right),
\end{equation}
with $DN^j_{int,0} : C^{2,\beta}\left(C(\zeta, a_j)\right)
\to C^{1,\beta}\left(C(\zeta, a_j)\right)$, $\fo j\in \llbracket 1,k\rrbracket$.

Furthermore, from \cite[Proposition~6.3]{pacardriviere} we know that $DN^j_{int,0}$ further splits as
\begin{equation}\label{DNintsepvar}
DN^j_{int,0} = T_1 \oplus T_2,\quad\text{with }
\begin{cases}
T_1:\mathrm{span}\left\lbrace e^{\pm \im n\theta}\right\rbrace_{n\geq 2} \to \mathrm{span}\left\lbrace e^{\pm \im n\theta}\right\rbrace_{n\geq 2} \\
T_2:\mathrm{span}\left\lbrace e^{\pm \im\theta}\right\rbrace\to\mathrm{span}\left\lbrace e^{\pm \im\theta}\right\rbrace
\end{cases}.
\end{equation}
Here, the operator $T_1$ does not depend on $g$. Therefore,  we only need to check that $T_2$ depends continuously on $g$. As a linear operator on a two-dimensional space, $T_2$ is represented by a $2\times 2$ matrix. It is clear from \cite[Proposition~6.3]{pacardriviere} that the coefficients of this matrix are smooth functions of $\nabla^2 H(a_j)$. In turn, $\nabla^2 H(a_j)$ depends smoothly on $g$, by Corollary~\ref{a(g)}.

Hence $DN_{int,0}-DN_{ext,0}$ depends continuously on $g$, as claimed.

This allows us to choose $\varepsilon_0$ independent of $g$ satisfying \eqref{e11151} in \cite[Proposition~6.5]{pacardriviere} and in \cite[Theorem~6.1]{pacardriviere}, and to obtain a uniform estimate for the inverse of $\widetilde{\mathcal{L}}_\varepsilon$.


\subsection*{Chapter 7 in \cite{pacardriviere}}

Finally, in \cite[Chapter~7]{pacardriviere} the results and estimates in \cite[Chapters 3-6]{pacardriviere}  are combined in order to prove Theorem \ref{pr}. Our above analysis shows that these estimates are uniform, and therefore lead to the uniform version Theorem \ref{pru} of  Theorem \ref{pr}.

\subsection*{Conclusion}
As a conclusion of our analysis, Theorem \ref{pru} holds.
\end{proof}
For further use, we record two additional properties of the maps $u_{\ve, g}$; these properties are immediate consequences of the construction in \cite{pacardriviere}. Let $\delta$ be as in Theorem \ref{pru}. We consider the set
\be
\l{e301}
{\mathfrak A}:=\left\{g\in C^{1,\beta}(\po ; \so);\, \|g-g_0\|_{C^{1,\beta}}<\delta\right\}.
\ee
\bl
\l{l301} 
Let $K\Subset \overline{\Omega}\setminus \lbrace a_j \rbrace$. Then
we have $|u_{\varepsilon,g}|\to 1$ as $\varepsilon\to 0$, uniformly in $K$ and in $g\in\mathfrak A$.
\el
\bp
This follows by an inspection of the construction in \cite{pacardriviere}. Formulas (5.36) and (5.37) in \cite{pacardriviere} ensure that,  for small $\varepsilon$, the approximate solution $\widetilde u_\ve$ satisfies  $|\widetilde{u}_\varepsilon|=|\xi|$ in $K$. The convergence then follows from the estimates on $\xi$, and  from formula (7.1) in \cite{pacardriviere} connecting the approximate solution to the exact solution.
\ep
For the next result, it may be necessary to replace $\delta$ by a smaller value.
\bl
\l{l302}
Let $g\in\mathfrak A$ and $\omega\in\so$. If $\omega g\in \mathfrak A$, then $u_{\ve, \omega g}=\omega u_{\ve, g}$. 
\el
\bp
We have $W(\cdot, g)=W(\cdot, \omega g)$. Therefore, if $a$ is a nondegenerate critical point of $W(\cdot, g)$, then $a$ is also a nondegenerate critical point of $W(\cdot, \omega g)$. By Corollary \ref{a(g)},  we find that $a(\omega g)=a(g)$. Using this equality, an inspection of the construction in \cite{pacardriviere} shows that  
\be
\l{e302}
\widetilde u_{\ve,\omega g}=\omega \widetilde u_{\ve, g}.
\ee 
Thanks to \eqref{e302}, we obtain that $\omega u_{\ve, g}$ has all the properties satisfied by the solution $u_{\ve,\omega g}$ constructed from $\widetilde u_{\ve,\omega g}$ via the inverse function theorem. Since the solution provided by the inverse function theorem is unique, we find that $u_{\ve, \omega g}=\omega u_{\ve, g}$, as claimed.
\ep

\section{Convergence of the normal differentiation operators}\label{sconv}
In this section, we fix integers $d_1,\ldots, d_k\in \{ -1, 1\}$ as in Section \ref{prus}. We assume that $a^0$ is a nondegenerate critical point of $W(\cdot, g_0)$. Let $g\in\mathfrak A$, where $\mathfrak A$ is given by \eqref{e301}, and let $0<\ve<\ve_0$. For such $g$ and $\ve$, we define   $u_\ve=u_{\ve, g}$ as in Section \ref{prus}. We also define $u_{\ast, g}:=u_{\ast, a(g), g}$, where $a(g)$ is the unique critical point of $W(\cdot, g)$ close to $a_0$ (see Corollary~\ref{a(g)}). We consider   the operators
\bes
T_\ve, T_\ast:{\mathfrak A}\to C^\beta(\po ;\R),\ T_\ve(g):=u_{\ve, g}\wedge\frac{\p u_{\ve, g}}{\p\nu}\text{ and }T_\ast(g):=u_{\ast, g}\wedge\frac{\p u_{\ast, g}}{\p\nu}.
\ees
The main result of this section is the following
\bpr
\l{p12031}
Let $0<\gamma<1$. Then (possibly after replacing $\delta$ by a smaller number) we have
\be
\l{e12031}
\lim_{\ve \to 0}\sup_{g\in{\mathfrak A}}\left\|  T_\ve(g)-T_\ast(g)\right\|_{C^\gamma(\po)}=0.
\ee
In particular, given $\mu>0$ there exists some $\ve_0>0$ such that, for $0<\ve<\ve_0$, $T_\ve-T_\ast:{\mathfrak A}\to C^\beta(\po ; \R)$ is compact and satisfies 
\bes
\|T_\ve(g)-T_\ast(g)\|_{C^\beta(\po)}\le\mu,\quad \fo \ve<\ve_0,\ \fo g\in{\mathfrak A}.
\ees
\epr
\begin{proof}
The last part of the proposition follows from the fact that the embedding $C^\gamma(\po ; \R)\hookrightarrow C^\beta(\po ; \R)$ is compact when $\gamma>\beta$. 

Whenever needed in the proof, we will replace $\delta$ by a smaller number. Let $a=a(g)$, $g\in\mathfrak A$, be such that $\na_a W(a, g)=0$ and $a$ is close to $a^0=(a^0_1,\ldots a^0_k)$. Let $t>0$ be a small number and set 
\bes
\omega:=\left\{ x\in\O;\, |x-a^0_j|>t,\ \fo j\in \llbracket 1,k\rrbracket\right\}.
\ees
We may assume that $|a(g)-a^0|<t/2$, $\fo g\in\mathfrak A$. In view of Theorem \ref{pru}, we have $u_{\ve, g}\to u_{\ast, g}$ in $C^{2,\gamma}(K)$ as $\ve\to 0$, for every $g\in\mathfrak A$ and for every $K$ compact set such that $K\subset \overline\omega\setminus \po$. In addition, by Lemma \ref{l301} we have $|u_{\ve, g}|\to 1$ as $\ve\to 0$ uniformly in $\overline\omega$ and in $g\in\mathfrak A$. 

Let $\theta=\theta_g$ be the multi-valued argument of 
\bes
z\mapsto \prod_{j=1}^k\frac{\left(z-a_j(g)\right)^{d_j}}{\left|z-a_j(g)\right|^{d_j}}. 
\ees
We note that $\na\theta_g$ is single-valued and that we have
\be
\l{e1203a}
\|\na \theta_g\|_{C^{1,\beta}(\omega)}\le C,\quad\fo g\in{\mathfrak A}.
\ee
For small $\ve$ (independent of $g$), we have $\deg (u_{\ve, g})=\deg (u_{\ast, g})=d_j$ on $C(a^0_j,t)$, and thus we may write, locally in $\overline\omega$, 
\bes
u_{\ve, g}=\rho e^{\im\va}=\rho_{\ve, g} e^{\im\va_{\ve, g}}=\rho e^{\im(\theta+\psi)}=\rho_{\ve, g} e^{\im(\theta_g+\psi_{\ve, g})},
\ees
and similarly
\bes
u_{\ast, g}= e^{\im(\theta+\psi_\ast)}= e^{\im(\theta_g+\psi_{\ast, g})}.
\ees
We may choose $\psi_{\ast ,g}$ in order to have
\be
\l{e1203b}
\| \psi_{\ast, g}\|_{C^{1,\beta}(\omega)}\le C,\quad\fo g\in{\mathfrak A},
\ee
and we normalize $\psi_{\ve, g}$ by the condition
\be
\l{e1203c}
\psi_{\ve, g}=\psi_{\ast, g}\quad\text{on }\po.
\ee
In terms of $\rho$, $\va$ and $\psi$, the  Ginzburg-Landau equation reads
\bes
\begin{cases}
\div (\rho^2\na\va)=\div (\rho^2(\theta+\psi))=0\\
\d -\Delta \rho=\frac 1{\ve^2}\rho(1-\rho^2)-\rho|\na\va|^2
\end{cases}.
\ees
\smallskip
\noindent
{\it Step 1.} We have 
\bes
\|\na\va_{\ve, g}\|_{L^p(\omega)}\le C_p,\quad, \fo \ve<\ve_0\ \fo g\in{\mathfrak A},\ \fo 1<p<\infty.
\ees
Indeed, we start by noting that we have $\|\na\theta_{ g}\|_{L^p(\omega)}\le C_p$; therefore, it suffices to prove that $\|\na\psi_{\ve, g}\|_{L^p(\omega)}\le C_p$. Using the equation $\div (\rho^2\na\va)=0$, we see  that $\psi_{\ve, g}$ satisfies
\be
\l{e12031b}
\Delta\psi_{\ve, g}=\div \left(\left(1-\rho^2_{\ve,g}\right)\na\theta_g+\left(1-\rho^2_{\ve,g}\right)\na\psi_{\ve, g}\right)\quad\text{in }\omega.
\ee
We obtain
\bes
\begin{split}
\|\na\psi_{\ve, g}\|_{L^p(\omega)} & \le C\left( \|\psi_{\ve, g}\|_{W^{1-1/p,p}(\p\omega)}+\|(1-\rho^2_{\ve,g})\na\theta_g\|_{L^p(\omega)}+\|(1-\rho^2_{\ve,g})\na\psi_{\ve, g}\|_{L^p(\omega)} \right) \\
& \le C_p +C\|1-\rho^2_{\ve,g}\|_{L^\infty(\omega)}\|\na\psi_{\ve, g}\|_{L^p(\omega)}.
\end{split}
\ees
Since $\rho_{\ve,g}\to 1$ as $\varepsilon\to 0$ uniformly in $\overline{\omega}$ and in $g\in\mathfrak{A}$, the second term in the right-hand side of the above inequality can be absorbed in the left-hand side and  we obtain the announced result.

\smallskip
\noindent
{\it Step 2.} For $1<p<\infty$ we have $\na\rho_{\ve , g}\to 0$ in $L^p(\omega)$ as $\ve\to 0$, uniformly in $g\in{\mathfrak A}$.\\
This is obtained as follows. Let $\eta:=\eta_{\ve, g}:=1-\rho_{\ve, g}\in [0,1]$, which satisfies
\be
\l{e12032}
\begin{cases}
\d-\Delta \eta+\frac 1{\ve^2}\rho(1+\rho)\eta=\rho|\na\va|^2&\text{in }\omega\\
\eta=0&\text{on }\po.\\
\end{cases}
\ee
Moreover, we have 
\begin{equation}\label{e12032a}
\frac{1}{4\varepsilon^2}\eta\leq \frac{1}{\varepsilon^2}\rho(1+\rho)\eta =\rho|\nabla\varphi|^2+\Delta\eta\leq C\quad\text{on }\partial\omega\setminus\partial\Omega,
\end{equation}
since $u_{\varepsilon,g}\to u_{*,g}$ in $C^{2,\gamma}(K)$ for any compact $K\subset \overline{\omega}\setminus\partial\Omega$, uniformly in $g\in\mathfrak{A}$.

We may assume that $p\ge 2$. Multiplying \eqref{e12032} by $\eta^{p-1}$ and using Step 1, H\"older's inequality and \eqref{e12032a} we find that, for small $\ve$, we have
\bes
\begin{aligned}
\frac 1{4\ve^2}\int_\omega\eta^p & \le \frac 1{\ve^2}
\int_\omega \rho(1+\rho)\eta^p\\
&=\int_\omega \rho|\na\va|^2\eta^{p-1}+\int_{\p\omega\setminus\po}\eta^{p-1}\frac{\p\eta}{\p\nu}-(p-1)\int_\omega\eta^{p-2}|\na\eta|^2\\&
\le \int_\omega \rho|\na\va|^2\eta^{p-1}+\int_{\p\omega\setminus\po}\eta^{p-1}\frac{\p\eta}{\p\nu}
\le C\left(\int_\omega \eta^p\right)^{(p-1)/p}+C\varepsilon^{2(p-1)},
\end{aligned}
\ees
and thus
\be
\l{e120333}
\|\eta_{\ve, g}\|_{L^p(\omega)}\le C_p\varepsilon^2,\quad\fo \ve<\ve_0,\ \fo g\in {\mathfrak A},\ \fo p<\infty.
\ee
Inserting \eqref{e120333} into \eqref{e12032}, we find that $\Delta\eta$ is bounded in $L^p(\omega)$, $\fo p<\infty$. By standard elliptic estimates, we find that $\eta$ (and thus $\rho$) is bounded in $W^{2,p}(\omega)$, $\fo p<\infty$. We conclude via the compact embedding $W^{2,p}\hookrightarrow W^{1,p}$ and the fact that, by Lemma \ref{l301}, we have $\rho\to 1$ uniformly in $\overline\omega$.

\medskip
\noindent
{\it Step 3.} For every $\gamma<1$, we have $\psi_{\ve, g}\to \psi_{\ast, g}$ in $C^{1,\gamma}(\overline\omega)$ as $\ve\to 0$, uniformly in $g\in{\mathfrak A}$.\\
Indeed, $\psi_{\ve , g}-\psi_{\ast, g}$ satisfies
\be
\l{e12034}
\begin{cases}
\Delta (\psi_{\ve, g}-\psi_{\ast, g})=\d -\frac 2{\rho_{\ve,g}}\na\rho_{\ve,g}\cdot \na(\theta_g+\psi_{\ve, g})&\text{in }\omega\\
\psi_{\ve,g}-\psi_{\ast, g}=0&\text{on }\po\\
\psi_{\ve,g}-\psi_{\ast, g}\to 0\text{ in }C^2&\text{on }\p\omega\setminus\po
\end{cases},
\ee
the latter convergence being uniform in $g$. By Steps 1 and 2, we have
\bes
\|\Delta (\psi_{\ve,g}-\psi_{\ast, g})\|_{L^p(\omega)}\to 0\quad\text{ as }\ve\to 0,\text{ uniformly in }g.
\ees
Using \eqref{e12034}, we find that $\psi_{\ve,g}-\psi_{\ast, g}\to 0$ in $W^{2,p}(\omega)$. We conclude via the  embedding $W^{2,p}(\omega)\hookrightarrow C^{1,\gamma}(\overline\omega)$, valid when $p>2$ and $\gamma=1-2/p$.

\medskip
\noindent
{\it Step 4.} Conclusion.\\
We have 
\bes
T_\ve(g)=u_{\ve , g}\wedge\frac{\p u_{\ve, g}}{\p\nu}=\frac{\p\va_{\ve,g}}{\p\nu}=\frac{\p\theta_{g}}{\p\nu}+\frac{\p\psi_{\ve,g}}{\p\nu},
\ees
and similarly $\d T_\ast(g)=\d \frac{\p\theta_{g}}{\p\nu}+\frac{\p\psi_{\ast,g}}{\p\nu}$. Using Step 3, we find that
\bes
T_\ve(g)-T_\ast(g)=\frac{\p(\psi_{\ve,g}-\psi_{\ast, g})}{\p\nu}\to 0\text{ in } C^\gamma(\po)\quad\text{as }\ve\to 0,\ \text{uniformly in }g\in{\mathfrak A}.\qedhere
\ees
\end{proof}

\section{Existence of critical points in nondegenerate domains}
\label{scp}

Before stating the main result of this section, let us recall the definition \eqref{U*} of the operator $U_*$ in Section \ref{snd}. Given $a^0$ a nondegenerate critical point of $W(\cdot,g)$, we first  define,  in a $C^{1,\beta}$ neighborhood of $g$,  the  operator $T_*=T_{*,a^0,g}$. Then $U_*$ is defined in a neighborhood $\mathfrak{B}$ of the origin in $C^{1,\beta}(\partial\Omega;\R)$ by
\bes
\d
U_*(\psi)=U_{\ast, a^0, g}(\psi)= T_*(g e^{\im\psi})=T_{*,a^0,g}(g e^{\im\psi}).
\ees
We still denote by $U_*$ the induced map $U_* : \mathfrak{B}/\R \to \dot{C}^\beta(\partial\Omega;\R)$, and recall that $U_*$ is smooth.

\begin{theorem}
\l{t8.1} Let $d_1,\ldots, d_k\in\{ -1, 1\}$ and set $d:=d_1+\ldots+d_k$. 

Let $\Omega$ be a bounded simply connected $C^{1,\beta}$ domain satisfying the two following nondegeneracy conditions:
\begin{itemize}
\item[(ND1)] There exists $a^0\in\Omega_*^k$ such that $a^0$ is a nondegenerate critical point of $W(\cdot, g^0)=W^\O(\cdot, d_1,\ldots,d_k, g^0)$, with $g^0=g^{a^0}$ the canonical boundary data associated with $a^0$ and $d_1,\ldots, d_k$.
\item[(ND2)] The corresponding operator $U_{*,a^0,g^0} : \mathfrak{B}/\R\to \dot{C}^\beta(\partial\Omega;\R)$ is a local diffeomorphism at the origin, i.e., the differential 
\begin{equation*}
DU_*(0) : C^{1,\beta}(\partial\Omega;\R)/\R \longrightarrow \dot{C}^\beta(\partial\Omega;\R)
\end{equation*}
is invertible.
\end{itemize}
Then there exists $\ve_0>0$ such that, for $\ve\in (0,\ve_0)$, there exists $u_\ve\in\mathcal{E}_d$ a critical point of $E_\ve$ with prescribed degree $d$.
\end{theorem}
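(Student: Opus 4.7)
The plan is to produce critical points of $E_\varepsilon$ in $\mathcal{E}_d$ as Pacard-Rivi\`ere solutions $u_{\varepsilon, g_\varepsilon}$ (Theorem~\ref{pru}) for a suitable Dirichlet datum $g_\varepsilon$ close to $g^0 := g^{a^0}$. A direct first-variation computation in $\mathcal{E}_d$ shows that a map $u\in\mathcal{E}_d$ is a critical point of $E_\varepsilon$ in $\mathcal{E}_d$ if and only if it solves the Ginzburg--Landau equation in $\Omega$ together with the natural boundary condition $u\wedge \partial u/\partial\nu=0$ on $\partial\Omega$. For any $g\in\mathfrak{A}$, the map $u_{\varepsilon,g}$ provided by Theorem~\ref{pru} belongs to $\mathcal{E}_d$ (since $\deg g = d$) and solves the Ginzburg--Landau equation; hence the problem reduces to finding $g_\varepsilon\in\mathfrak{A}$ with $T_\varepsilon(g_\varepsilon)=0$. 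Parametrizing $g=g^0 e^{\im\psi}$ and invoking Lemma~\ref{l302} to pass to the quotient by constants, this amounts to solving
\[
U_\varepsilon(\psi):=T_\varepsilon(g^0 e^{\im\psi})=0,\quad \psi\in\mathfrak{B}/\R\subset C^{1,\beta}(\partial\Omega;\R)/\R.
\]

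The starting observation is that $U_*(0)=0$. Indeed, the defining property \eqref{e2.18} of the canonical boundary datum gives $\Phi_{a^0,g^0}=\widehat\Phi_{a^0}$, which vanishes on $\partial\Omega$; combining with \eqref{defN(a,g)} yields
\[
T_*(g^0)=-\frac{\partial\Phi_{a^0,g^0}}{\partial\tau}=-\frac{\partial\widehat\Phi_{a^0}}{\partial\tau}=0.
\]
Thus $\psi=0$ solves the limiting equation $U_*(\psi)=0$, and hypothesis \textit{(ND2)} asserts precisely that this solution is nondegenerate: $L:=DU_*(0):C^{1,\beta}(\partial\Omega;\R)/\R\to \dot C^\beta(\partial\Omega;\R)$ is an isomorphism.

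Existence of $\psi_\varepsilon$ close to $0$ then follows from a Leray--Schauder degree argument. Composing with $L^{-1}$, the equation $U_\varepsilon(\psi)=0$ is equivalent to locating a zero on a small ball $B_r\subset C^{1,\beta}/\R$ of
\[
L^{-1}U_\varepsilon=\mathrm{id}+L^{-1}R+L^{-1}(U_\varepsilon-U_*),\qquad R:=U_*-L.
\]
The perturbation $L^{-1}(U_\varepsilon-U_*)$ is compact with arbitrarily small norm as $\varepsilon\to 0$, by Proposition~\ref{p12031}. The remainder $R$ is itself compact: inspection of the explicit formula \eqref{Ndisc} (together with the transport formula \eqref{transformN} for general $\Omega$) reveals that, beyond the linear term $\partial\psi^*/\partial\tau$ already absorbed in $L$, the dependence of $U_*(\psi)$ on $\psi$ factors through the finite-dimensional parameter $a(g^0 e^{\im\psi})$ from Corollary~\ref{a(g)}; hence $R$ has finite-rank image. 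Consequently $L^{-1}U_\varepsilon=\mathrm{id}+\text{compact}$, and its Leray--Schauder degree is well defined. Homotoping via $H_t(\psi):=L\psi+t(U_\varepsilon(\psi)-L\psi)$ and using
\[
\|H_t(\psi)\|_{\dot C^\beta}\ \ge\ c\|\psi\|_{C^{1,\beta}}-C\|\psi\|_{C^{1,\beta}}^2-\mu_\varepsilon\quad\text{on }\partial B_r
\]
(with $c$ the coercivity constant of $L$, the quadratic term coming from $R(\psi)=O(\|\psi\|^2)$ since $R(0)=DR(0)=0$, and $\mu_\varepsilon\to 0$ from Proposition~\ref{p12031}), one chooses first $r$ small and then $\varepsilon$ small so that $H_t$ never vanishes on $\partial B_r$ for $t\in[0,1]$. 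The degree is preserved along the homotopy and equals $\deg(\mathrm{id},B_r,0)=1$, producing the desired zero $\psi_\varepsilon$; then $u_\varepsilon:=u_{\varepsilon,g^0 e^{\im\psi_\varepsilon}}$ is a critical point of $E_\varepsilon$ in $\mathcal{E}_d$. The main obstacle is the verification that the nonlinear remainder $R$ is compact---without this one cannot frame the problem as $\mathrm{id}+\text{compact}$; this is precisely where the explicit disc formulas of Sections~\ref{st}--\ref{sef} and the finite-dimensional dependence $g\mapsto a(g)$ become essential.
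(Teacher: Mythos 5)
Your proposal is correct and arrives at the result by a Leray--Schauder degree argument, as does the paper, but the two decompositions are genuinely different and it is worth contrasting them.

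The paper exploits \textit{(ND2)} in the form $U_*$ is a local homeomorphism, so that $U_*(\mathfrak{B}/\R)\supset B_\eta$, and then writes
\[
U_\ve(\mathfrak{B}/\R)=\bigl(\text{Id}+L_\ve\bigr)\bigl(U_*(\mathfrak{B}/\R)\bigr),\qquad L_\ve:=(U_\ve-U_*)\circ U_*^{-1},
\]
applying the degree argument to $\text{Id}+L_\ve$ on $B_\eta$. The only compactness input this requires is that of $U_\ve-U_*$, which is Proposition~\ref{p12031}. You instead linearize $U_*$ at the origin, invert $L=DU_*(0)$, and reduce to $\text{id}+L^{-1}R+L^{-1}(U_\ve-U_*)$ on a small ball $B_r$; this forces you to establish compactness of the nonlinear remainder $R=U_*-L$ as an extra step. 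Both routes are legitimate. Yours is perhaps more explicit about the quadratic smallness $R(\psi)=O(\|\psi\|^2)$, which gives a clean lower bound on $\partial B_r$; the paper's route avoids analyzing $R$ altogether, at the cost of invoking the inverse function theorem to turn \textit{(ND2)} into the covering property $U_*(\mathfrak{B}/\R)\supset B_\eta$, and then running the degree argument ``on the image side.'' The paper's remark that \textit{(ND2')} (local homeomorphism) would already suffice is essentially a reflection of its decomposition, which uses only $U_*^{-1}$ and never differentiates $U_*$.

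One small imprecision: you describe $R$ as having ``finite-rank image.'' This is not literally true, since the functions $z\mapsto\alpha_j\wedge z/|z-\alpha_j|^2$ span an infinite-dimensional set as $\alpha_j$ varies; the image of $R$ on a bounded set is not contained in a fixed finite-dimensional subspace. What is true, and what your degree argument actually needs, is that $\psi\mapsto U_*(\psi)-\partial\psi^*/\partial\tau$ factors continuously through the finite-dimensional parameter $\alpha(\psi)\in\mathbb{D}_*^k$, so $R$ maps bounded sets to precompact sets and is therefore a compact (nonlinear) perturbation. With that wording corrected, the argument is sound.
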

\begin{remark}
It will be clear from the proof of Theorem \ref{t8.1} that the nondegeneracy condition {\it (ND2)} can actually be replaced by the following weaker condition:\\
{\it (ND2') $U_*$ is a local homeomorphism near the origin.}

 However in what follows it will be more convenient for us to consider the condition {\it (ND2)}. The main reason for this is that {\it (ND2)} is stable under small perturbation of the domain, while it is not clear that {\it (ND2')} is stable.
\end{remark}
\br
\l{gb1}
We connect here  the hypothesis {\it (ND2)} in Theorem \ref{t8.1} to the hypothesis {\it (ND2)} presented in the introduction. 
As we will see in Section \ref{gen},\footnote{In the special case where $d=1$ and $k=1$, but the arguments there adapt to the general case.} $DU_\ast(0)$ is a Fredholm operator of index zero. Thus   the above hypothesis {\it (ND2)} is equivalent to the fact that $DU_\ast(0)$ is onto. It is not difficult to see (but will not be needed in what follows) that the surjectivity of $DU_\ast(0)$ is equivalent to the hypothesis {\it (ND2)}  in the introduction, and that the index of the operator $L$ that appears in the introduction is 
$\d 
\ind L=\ind DU_\ast(0)+1=1$. 
\er
\begin{proof}[Proof of Theorem \ref{t8.1}]
Since $\Omega$ satisfies {\it (ND1)}, the results of Section~\ref{prus} and \ref{sconv} apply. We consider, as in Section~\ref{sconv}, the operators
\begin{equation*}
T_\ve,T_\ast : \mathfrak{A} \to \dot{C^\beta}(\partial\Omega;\R)
\end{equation*}
and
\bes
U_\ast : \mathfrak{B}/\R \to \dot{C}^\beta(\partial\Omega;\R),
\ees
where $\mathfrak{A} = \lbrace g \in C^{1,\beta}(\Omega,\mathbb{S}^1) ; \; \|g-g^0\|<\delta\rbrace$ and $\mathfrak{B}=\lbrace \psi\in C^{1,\beta}(\partial\Omega;\R);\,\|\psi\|<\delta\rbrace$.
Here, $\delta$ and  $\ve$ are sufficiently small. We note that $T_\ve$ takes its values in $\dot{C}^\beta(\partial\Omega;\R)$. Indeed,  $u=u_{\ve,g}$ satisfies
\begin{equation*}
\int_{\partial\Omega} u \wedge \frac{\partial u}{\partial \nu} = \int_\Omega \div\, (u\wedge \nabla u) = \int_\Omega u \wedge \Delta u = \int_\Omega \frac{|u|^2-1}{\varepsilon^2} u \wedge u =0.
\end{equation*}

By Lemma \ref{l302}, we may also consider the induced operators
\begin{equation*}
U_\ve : \mathfrak{B}/\R \to \dot{C}^\beta(\partial\Omega;\R),\; U_\ve (\psi) = T_\ve(g^0e^{\im\psi}).
\end{equation*}
%
%
By definition of the canonical boundary datum, it holds
\begin{equation}\label{U*(0)}
U_*(0)=u_{*,a^0,g^0}\wedge \frac{\partial u_{*,a^0,g^0}}{\partial\nu} = \frac{\partial \Phi_{a^0,g^0}}{\partial\tau} = \frac{\partial \widehat{\Phi}_{a^0}}{\partial\tau} = 0.
\end{equation}
Thanks to {\it (ND2)}, by considering a smaller $\delta$ if necessary, we may assume that $U_*$ is a homeomorphism onto its image. By \eqref{U*(0)}, there exists some $\eta>0$ such that 
\begin{equation}\label{imU*}
U_*(\mathfrak{B}/\R)\supset \left\lbrace \psi\in \dot{C}^\beta(\partial\Omega;\R);\, \|\psi\|_{C^\beta(\po)}<\eta \right\rbrace:=B_\eta.
\end{equation}
Recall the result of Proposition~\ref{p12031}: for sufficiently small  $\ve$, $U_\ve-U_*$ is compact and we have 
\be
\l{e303}
\lim_{\ve\to 0}\sup\left\{\|U_\ve(\psi)-U_\ast(\psi)\|_{C^\beta(\po)};\,  \psi\in {\mathfrak B}\right\}=0.
\ee
Using \eqref{imU*}, \eqref{e303} and standard properties of the Leray-Schauder degree, we find that 
\begin{equation}\label{imUe}
U_\ve (\mathfrak{B}/\R)\supset \left\lbrace \psi\in \dot{C}^\beta(\partial\Omega;\R);\, \|\psi\|_{C^\beta(\po)}<\frac{\eta}{2} \right\rbrace=B_{\eta/2},
\end{equation}
for sufficiently small $\ve$. Indeed, the argument goes as follows. We start from
%
\begin{equation}\label{imUe2}
U_\ve(\mathfrak{B}/\R)= \left(\text{Id}+(U_\ve-U_*)\circ U_*^{-1} \right) (U_*(\mathfrak{B}/\R)) \supset \left(\text{Id}+(U_\ve-U_*)\circ U_*^{-1} \right)(B_\eta).
\end{equation}
Here, $\text{Id}$ denotes the identity map in $\dot{C}^\beta(\partial\Omega;\R)$. 

Let $L_\ve :=(U_\ve-U_*)\circ U_*^{-1}$. Then  $L_\ve:B_\eta\to\dot{C}^\beta(\partial\Omega;\R)$ is compact and, by \eqref{e303},  there exists $\ve_0>0$ such that
\begin{equation}
\l{e305}
 \sup\left\{ \| L_\ve (\psi)\|_{C^\beta(\partial\Omega)};\, \psi\in B_\eta\right\}< \eta/2,\quad\forall\, \ve\in(0,\ve_0).
\end{equation}
We complete the proof of \eqref{imUe} by showing that $B_{\eta/2}\subset (Id+L_\ve)(B_\eta)$ for $\ve\in (0,\ve_0)$. For this purpose, we let  $\psi_0\in B_{\eta/2}$ and consider the compact operator $T:B_\eta\to \dot C^\beta (\po ; \R)$, $T(\psi):=L_\ve(\psi)-\psi_0$. 
%
We claim that
\begin{equation}
\l{e304}
(\text{Id}+sT)(\psi)\neq 0, \quad \fo s\in [0,1],\ \fo\psi\in\partial B_\eta.
\ee
Indeed, \eqref{e304} is obtained by contradiction. Otherwise, using \eqref{e305}, we obtain, for some $\psi$ such that $\|\psi\|_{C^\beta(\po)}=\eta$:
\begin{equation*}
\eta/2 < \eta - s\|L_\ve(\psi)\|_{C^\beta (\po)} \leq \|\psi+s L_\ve (\psi)\|_{C^\beta (\po)} = \|s\psi_0\|_{C^\beta (\po)}<\eta/2.
\end{equation*}
By \eqref{e304},  the Leray-Schauder degree $\deg (\text{Id}+sT,B_\eta, 0)$ is well defined. By homotopy invariance, we find that 
\begin{equation*}
\deg(\text{Id}+sT,B_\eta, 0) = \deg(\text{Id},B_\eta, 0)=1.
\end{equation*}
As a consequence, the equation $(\text{Id}+T)(\psi)=0$ admits at least a solution $\psi\in B_\eta$. This $\psi$ satisfies  $(\text{Id}+L_\ve)(\psi)=\psi_0$. The proof of \eqref{imUe} is complete.

Let $\ve\in(0, \ve_0)$. Then, by \eqref{imUe}, there exists some $\psi\in \mathfrak{B}$ such that $U_\ve(\psi)=0$. Let $g=g^0 e^{\im\psi}$. Then $u_\ve = u_{\ve,g}\in\mathcal{E}_d$ is a solution of the Ginzburg-Landau equation, and it satisfies the semi-stiff boundary condition
\begin{equation*}
u_\ve \wedge \frac{\partial u_\ve}{\partial\nu} = T_\ve (g) = U_\ve(\psi) = 0\quad \text{on }\partial\Omega.
\end{equation*}
Therefore,  $u_\ve$ is a critical point of $E_\ve$ with prescribed degree $d$.
\end{proof}

\section{Nondegeneracy of domains is stable}
\label{sno}
%
%
In this section we show that, if a domain $\Omega_0$ satisfies the nondegeneracy conditions {\it (ND1)-(ND2)} required in Theorem \ref{t8.1}, then a slightly perturbed domain $\Omega\approx\Omega_0$ still satisfies these nondegeneracy conditions.
\begin{theorem}\label{thmnds}
Assume that $\Omega_0$ satisfies {\it (ND1)-(ND2)}. Fix a conformal representation $f_0:\mathbb{D}\to\Omega_0$. There exists $\delta>0$ such that, for every holomorphic map $f\in C^{1,\beta}(\overline\D)$  satisfying $\|f-f_0\|_{C^{1,\beta}}<\delta$, the domain $\Omega:=f(\mathbb{D})$ satisfies {\it (ND1)-(ND2)}.
\end{theorem}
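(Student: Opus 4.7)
The plan is to parametrize simply connected domains near $\Omega_0$ by conformal representations $f\in V_\beta$ close to $f_0$, pull the relevant objects back to the unit disc via the transport formulas of Section~\ref{st}, and establish the stability of (ND1) and (ND2) separately: the first via the implicit function theorem, the second via openness of invertibility in the operator-norm topology. The guiding principle is the clean transport rule \eqref{transformga}: the pull-back through $f$ of the canonical datum $g^a_{f(\D)}$ equals the canonical datum $g^{f^{-1}(a)}_{\D}$ on the \emph{fixed} disc.

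For the stability of (ND1), set $\alpha^0:=f_0^{-1}(a^0)$ and introduce the smooth map $\Psi:\D_*^k\to C^{1,\beta}(\partial\D;\R)/\R$ characterized by $g^\alpha_{\D}=g^{\alpha^0}_{\D}\,e^{\im \Psi(\alpha)}$ (modulo $\so$), with $\Psi(\alpha^0)=0$. I would look for $(\alpha(f),\psi(f))$ near $(\alpha^0,0)$ solving the joint system
\begin{equation*}
\nabla_\alpha \widetilde W^\beta(\alpha, f, \psi) = 0,\qquad \psi = \Psi(\alpha),
\end{equation*}
the first expressing criticality of the transported energy of Lemma~\ref{wsmooth}, the second forcing the transported boundary datum $(g^{\alpha^0}_{\D}e^{\im\psi})\circ f^{-1}$ to coincide, by \eqref{transformga}, with the canonical datum on $f(\D)$ at the point $a(f):=f(\alpha(f))$. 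Both equations are smooth in $(\alpha,\psi,f)$ and vanish at $(\alpha^0,0,f_0)$, so the implicit function theorem yields smooth $(\alpha(f),\psi(f))$ as soon as the joint linearization at this point is invertible; then $a(f)$ is automatically a nondegenerate critical point of $W^{f(\D)}(\cdot,g^{a(f)}_{f(\D)})$, which is (ND1) for $f(\D)$.

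The main obstacle will be verifying invertibility of this $2\times 2$ joint linearization. Its $\alpha$-diagonal block is $D^2_\alpha \widetilde W^\beta(\alpha^0,f_0,0)$, invertible by (ND1) (via Proposition~\ref{p301} applied at $(\psi,f)=(0,f_0)$). The second block, obtained after eliminating $\alpha$ through the first equation, is a map on $C^{1,\beta}(\partial\D;\R)/\R$ whose explicit form, via the disc formulas \eqref{Ndisc}--\eqref{nablapsi*ag0disc} and \eqref{transformN}, can be identified with $DU_{*,a^0,g^0}(0)$ up to conjugation by natural isomorphisms; its invertibility is precisely (ND2). A block-triangular decomposition, combined with the index-zero Fredholm property of $DU_*(0)$ recorded in Remark~\ref{gb1}, then reduces invertibility of the full system to that of the two diagonal blocks. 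Some care is needed in tracking the $\so$ and constant quotients that appear in domains and codomains.

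Once $a(f)$ is constructed, the stability of (ND2) is comparatively soft. Pulling $DU^{f(\D)}_{*,a(f),g^{a(f)}}(0)$ back through $f$ yields a family of operators $\mathcal L^f:C^{1,\beta}(\partial\D;\R)/\R\to\dot{C}^\beta(\partial\D;\R)$ on \emph{fixed} spaces. Using the explicit formulas of Section~\ref{sef} together with the smoothness of $\alpha\mapsto g^\alpha_{\D}$, of $\alpha(f)$, and of $f\mapsto f$ in $C^{1,\beta}$, one checks that $f\mapsto\mathcal L^f$ is continuous in the operator-norm topology. Since $\mathcal L^{f_0}$ is invertible by (ND2) and since the invertible elements form an open subset of $\mathcal L(C^{1,\beta}/\R,\dot{C}^\beta)$, $\mathcal L^f$ stays invertible for $f$ near $f_0$, which is (ND2) for $f(\D)$ and completes the proof.
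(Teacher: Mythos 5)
Your proposal is correct and close in spirit to the paper, but the packaging is genuinely different, and it is worth spelling out the trade-offs. The paper eliminates $\alpha$ first (this is exactly Proposition~\ref{p301}), then solves the single equation $U(f,\psi)=0$ for $\psi(f)$; (ND2) is used once, as the invertibility of $D_\psi U(f_0,0)$, and both (ND1) and (ND2) for the perturbed $\Omega$ fall out of the same IFT, provided one separately shows that $U(f,\psi(f))=0$ forces $g_\Omega=g^{a_\Omega}$ (via $N^\Omega=0\Rightarrow\Phi_{a_\Omega,g_\Omega}=\widehat\Phi_{a_\Omega}$). You instead set up the joint $(\alpha,\psi)$ system with the constraint $\psi=\Psi(\alpha)$, which gives $g_\Omega=g^{a_\Omega}$ for free; but you then pay for it by having to identify the Schur complement $I-CA^{-1}B$ with $DU_*(0)$. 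That identification does go through: differentiating $N^\D(\alpha,g^{\alpha^0}e^{\im\Psi(\alpha)})=0$ gives $D_\psi N^\D\cdot D\Psi=-D_\alpha N^\D$, and then
\[
D_\psi U(f_0,0)=D_\alpha N^\D\cdot D_\psi\alpha + D_\psi N^\D
 = D_\psi N^\D\bigl(I - D\Psi\,A^{-1}B\bigr),
\]
with $J:=D_\psi N^\D=\partial(\cdot)^*/\partial\tau$ an isomorphism $C^{1,\beta}(\so;\R)/\R\to\dot C^\beta(\so;\R)$, so your Schur complement equals $J^{-1}D_\psi U(f_0,0)$ and (ND2) is exactly what is needed. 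Two small remarks. First, the phrase ``reduces invertibility of the full system to that of the two diagonal blocks'' is imprecise: the diagonal blocks are $A$ and $\mathrm{Id}$; what you actually use is invertibility of $A$ plus invertibility of the Schur complement. Second, your separate operator-continuity argument for the stability of (ND2) is unnecessary: once the joint Jacobian is invertible at $(\alpha(f),\psi(f),f)$ for $f$ near $f_0$ (which the IFT supplies after shrinking the neighborhood), its Schur complement $J^{-1}D_\psi U(f,\psi(f))$ is invertible, and that is precisely (ND2) for $\Omega=f(\D)$ — the same observation the paper uses, but transported to your setting.
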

\begin{proof}
Let $V_\beta$ be as in \eqref{e3.23a}.
We let $\widetilde{g}^0\in C^{1,\beta}(\so;\mathbb{S}^1)/\mathbb{S}^1$ denote the canonical boundary datum associated with $\alpha^0:=f_0^{-1}(a^0)$, so that $\widetilde{g}^0=g^0\circ f_0$.

Since $\Omega_0$ satisfies {\it (ND1)},  we know from Proposition~\ref{p301} that there exist:  a neighborhood $\mathcal{V}_1$ of $f_0$ in $V_\beta$, a neighborhood  $\mathcal{V}_2$ of the origin in $C^{1,\beta}(\so;\R)$, and a smooth map $\alpha:\mathcal{V}_1\times\mathcal{V}_2\to\mathbb{D}_*^k$, such that the following holds. 
For $f\in\mathcal{V}_1$ and $\psi\in\mathcal{V}_2$, let $\Omega=f(\mathbb{D})$ and $g=(\widetilde{g}^0 e^{\im\psi})\circ f^{-1}$. Then $a(f,\psi):=f\left(\alpha(f,\psi)\right)$ is a nondegenerate critical point of $W^{\Omega}(\cdot,g)$.

By the above, we may define, as in \eqref{defT*},  the smooth operator $U_{*,f} = U_{*,a(f,0),\widetilde{g}^0\circ f^{-1}}$,
\begin{equation}\label{U*f}
U_{*,f}(\psi)=N^\Omega \left(a(f,\psi\circ f),(\widetilde{g}^0\circ f^{-1}) e^{\im\psi}\right)\quad\text{for small }\psi\in C^{1,\beta}(\partial\Omega;\R)/\R.
\end{equation}
The spaces between which $U_{*,f}$ is defined vary with $f$. In order to deal with  fixed spaces, we consider the linear isomorphisms
\begin{gather}
\label{linisom1}
\Theta_f : \dot{C}^\beta(\partial\Omega;\R)\to\dot{C}^\beta(\so;\R),\,\psi\mapsto |f'|\psi\circ f,\\
\label{linisom2}
\Xi_f : C^ {1,\beta}(\partial\Omega;\R)/\R\to C^ {1,\beta}(\so;\R)/\R,\,\psi\mapsto\psi\circ f,
\end{gather}
and we let 
\begin{equation}\label{U1}
U(f,\psi) = \Theta_f \circ U_{*,f} \circ \Xi_f^{-1} (\psi) \quad\text{for }(f,\psi)\in\mathcal{V}_1 \times (\mathcal{V}_2/\R),
\end{equation}
so that $U_{*,f}$ is a local diffeomorphism if and only if $U(f,\cdot)$ is a local diffeomorphism.

Moreover, if we express $N^\O$ using \eqref{transformN}, then we obtain
\begin{equation}\label{U2}
U(f,\psi) = N^{\mathbb D}(\alpha(f,\psi),\widetilde{g}_0e^{\im\psi}).
\end{equation}
By combining \eqref{U2} with  the explicit formula \eqref{Ndisc} for $N^{\mathbb D}$, we find that $U:\mathcal{V}_1 \times (\mathcal{V}_2/\R)\to \dot{C}^\beta(\so;\R)$ is smooth.

On the other hand, using the definition \eqref{e2.16} 
 of the canonical boundary datum, we have
\begin{equation*}
u_{*,a,g^a}\wedge\frac{\partial u_{*,a,g^a}}{\partial\nu}  = \frac{\partial \Phi_{a,g^a}}{\partial\tau} = \frac{\partial \widehat{\Phi_a}}{\partial\tau} = 0,
\end{equation*}
so that $U(f_0,0)=0$.

Moreover, since $\Omega_0$ satisfies {\it (ND2)}, $U(f_0,\cdot)$ is a local diffeomorphism near the origin, i.e.,  $D_\psi U(f_0,0)$ is invertible. By the implicit function theorem, possibly after shrinking  $\mathcal{V}_1$, for every $f\in\mathcal{V}_1$ there exists $\psi(f)\in\mathcal{V}_2$ such that
\begin{equation}\label{implicitU}
U\left(f,\psi(f)\right) = 0.
\end{equation}
In addition, the map $f\mapsto\psi(f)$ is smooth and we can assume that $D_\psi U (f,\psi(f))$ is invertible. 

Let $f\in \mathcal{V}_1$ and set $\O:=f(\mathbb{D})$. We claim that $\Omega$ satisfies {\it (ND1)-(ND2)}. Assuming the claim proved for the moment, we complete the proof of Theorem \ref{thmnds} by taking any $\delta>0$ such that 
\bes
\{ f\in X_\beta;\, \|f-f_0\|_{C^{1,\beta}}<\delta\}\subset\mathcal{V}_1.
\ees
We next turn to the proof of the claim. 
Let $g_\Omega:=(\widetilde{g}_0 e^{\im\psi(f)})\circ f^{-1}\in C^{1,\beta}(\partial\Omega;\mathbb{S}^1)$, and $a_\Omega:=a(f,\psi(f))\in\Omega_*^k$. By the definition \eqref{implicitU}  of $\psi(f)$ and the definition  \eqref{U1} of $U$, we obtain  
\begin{equation}
\l{e308}
U_{*,f}(\psi(f)\circ f^{-1}) = 0.
\end{equation}
By combining \eqref{e308} with the definition \eqref{U*f}  of $U_{*,f}$, we find that
\begin{equation}
\l{e309}
N^\Omega (a_\Omega, g_\Omega ) = u_{*,a_\Omega,g_\Omega}\wedge\frac{\partial u_{*,a_\Omega,g_\Omega} }{\partial\nu}= \frac{\partial \Phi_{a_\Omega, g_\Omega}}{\partial\tau} = 0.
\end{equation}
  The normalization condition in \eqref{e2.5} combined with \eqref{e309} implies that  $\Phi_{a_\Omega,g_\Omega} = 0$ on $\partial\Omega$, and thus 
\begin{equation}
\l{e310}
\Phi_{a_\Omega,g_\Omega} = \widehat{\Phi}_{a_\Omega}.
\end{equation}
In turn, \eqref{e310} implies that $g_\Omega = g^{a_\Omega}$ is the canonical boundary data associated with $a_\Omega$. Since, by definition of the map $(f,\psi)\mapsto a(f,\psi)$, the configuration $a_\Omega$ is a nondegenerate critical point of $W(\cdot,g_\Omega)$, we find that the nondegeneracy condition {\it (ND1)} is satisfied by $\O$.

On the other hand, since $D_\psi U (f,\psi(f))$ is invertible, $U_{*,f}$ is a local diffeomorphism near $\psi(f)\circ f^{-1}$, which means that $U_{*,a_\Omega,g_\Omega}$ is a local diffeomorphism near the origin. We find that  $\O$ satisfies {\it (ND2)}.

The proof of Theorem \ref{thmnds} is complete.
\end{proof}

\section{The radial configuration is nondegenerate}
\label{sra}

In this section we let $d=1$, $k=1$, $d_1=1$,  and prove that the unit disc $\mathbb{D}$ satisfies {\it (ND1)-(ND2)}. As a consequence,  domains close to the unit disc satisfy the nondegeneracy conditions when $d=1$, $k=1$, $d_1=1$.

\begin{proposition}\label{propdisc}
Assume $\Omega=\mathbb{D}$, $k=1$, $d=1$. Then $a=0$ is a nondegenerate critical point of $W(\cdot,g^0)$, and $DU_{*,0,g^0}(0)$ is invertible.
\end{proposition}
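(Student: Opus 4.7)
The strategy is to exploit the explicit formulas from Section~\ref{sef} to compute both conditions directly. For $\Omega=\D$, $k=1$, $d_1=1$, $\alpha^0=0$, formula \eqref{hatphidisc} yields $\widehat{\Phi}^{\D}_0(z)=\log|z|$, so the canonical datum is $g^0(z)=z$ (modulo $\so$), and \eqref{hatwdisc} gives $\widehat{W}^{\D}(\alpha)=\pi\log(1-|\alpha|^2)$.

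\textbf{Step 1 (verification of (ND1)).} Setting $\alpha^0=0$ in \eqref{nablapsi*ag0disc} yields $\nabla\psi^*_{\alpha,g^0}(z)=2\alpha(1-\bar\alpha z)/|1-\bar\alpha z|^2$, hence $|\nabla\psi^*_{\alpha,g^0}|^2=4|\alpha|^2/|1-\bar\alpha z|^2$. Formula \eqref{Wdisc} with $\psi=0$ then gives
\bes
W^{\D}(\alpha,g^0)=\pi\log(1-|\alpha|^2)+2|\alpha|^2\int_{\D}\frac{dA(z)}{|1-\bar\alpha z|^2}.
\ees
A Taylor expansion of the integrand (using $\int_{\D}|z|^2\,dA=\pi/2$ and the vanishing of $\int_{\D}z^j\bar z^l\,dA$ for $j\ne l$ up to order two) gives $W^{\D}(\alpha,g^0)=\pi|\alpha|^2+O(|\alpha|^4)$. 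Thus $\alpha=0$ is a critical point with Hessian $2\pi I$, proving nondegeneracy.

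\textbf{Step 2 (computation of $\dot\alpha:=D\alpha(0)\cdot\dot\psi$).} Let $\alpha(\psi)$ be the critical point of $W^{\D}(\cdot,g^0 e^{\im\psi})$ close to $0$ given by Corollary~\ref{a(g)}. Differentiating $\nabla_\alpha W^{\D}(\alpha(\psi),g^0 e^{\im\psi})=0$ at $\psi=0$, using $\psi^*_{0,g^0}=0$ together with the Hessian from Step 1, one obtains
\bes
2\pi\,\dot\alpha_j=-\int_{\D}\nabla\dot\psi^*\cdot\nabla\bigl(\partial_{\alpha_j}\psi^*_{\alpha,g^0}|_{\alpha=0}\bigr),\quad j=1,2.
\ees
From the explicit form $\psi^*_{\alpha,g^0}(z)=-2\log|1-\bar\alpha z|$ (see the proof of Lemma~\ref{discexplicit}), a direct calculation gives $\nabla_\alpha\psi^*_{\alpha,g^0}|_{\alpha=0}(z)=2(\Re z,\Im z)$, whose $z$-gradient is the constant matrix $2I$; the divergence theorem thus yields $\dot\alpha_j=-\pi^{-1}\int_{\so}\dot\psi^*\,\nu_j\,d\theta$. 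Expanding $\dot\psi(\theta)=\sum_{n\ge 1}(a_n\cos n\theta+b_n\sin n\theta)$, one has $\dot\psi^*(1,\theta)=\sum_{n\ge 1}(a_n\sin n\theta-b_n\cos n\theta)$; only the $n=1$ modes contribute to the above integral, and we conclude $\dot\alpha=(b_1,-a_1)$.

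\textbf{Step 3 (verification of (ND2)).} With $\alpha^0=0$, formula \eqref{Ndisc} reduces to $U_*(\psi)=\partial_\tau\psi^*-2(\alpha(\psi)\wedge z)/|z-\alpha(\psi)|^2$. Differentiating at $\psi=0$ (recalling $\alpha(0)=0$) and using $|z|=1$ on $\so$ gives
\bes
DU_*(0)\cdot\dot\psi=\partial_\tau\dot\psi^*-2\,\dot\alpha\wedge z.
\ees
In Fourier, $\partial_\tau\dot\psi^*=\sum_{n\ge 1}n(a_n\cos n\theta+b_n\sin n\theta)$, while $2\dot\alpha\wedge z=2a_1\cos\theta+2b_1\sin\theta$; substitution yields
\bes
DU_*(0)\cdot\dot\psi=-a_1\cos\theta-b_1\sin\theta+\sum_{n\ge 2}n(a_n\cos n\theta+b_n\sin n\theta).
\ees
Hence $DU_*(0)$ is diagonal in the real Fourier basis with eigenvalues $-1$ on the $n=1$ modes and $n$ on the $n\ge 2$ modes. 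Since these eigenvalues are uniformly bounded away from zero and grow linearly in $n$, standard Fourier-multiplier estimates yield that $DU_*(0)$ induces a bijection $C^{1,\beta}(\so;\R)/\R\to\dot C^\beta(\so;\R)$, establishing (ND2).

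\textbf{Main obstacle.} The delicate point is Step 2: the vortex displacement $\dot\alpha$ couples nontrivially precisely to the $n=1$ Fourier modes of $\dot\psi$, and the coefficients must be tracked carefully to confirm that the $n=1$ eigenvalue of $DU_*(0)$ is $-1$ rather than accidentally vanishing (which would destroy invertibility). All higher Fourier modes decouple from $\dot\alpha$ and contribute the trivially invertible multiplier $n$.
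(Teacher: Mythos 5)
Your proof is correct and follows essentially the same route as the paper: Step 1 is identical (using \eqref{Wdisc}, \eqref{nablapsi*ag0disc}, \eqref{hatwdisc} to compute $\nabla_a^2 W(0,g^0)=2\pi I_2$), and Step 2 arrives at the paper's formula for $Da(0)$ by applying the divergence theorem to $\int_\D\nabla\dot\psi^*\cdot\nabla\bigl(\partial_{\alpha_j}\psi^*_{\alpha,g^0}\bigr)$, which is an equivalent integration by parts to the paper's \eqref{DpsiF2}. The only presentational deviation is in Step 3, where you diagonalize $DU_*(0)$ explicitly as a Fourier multiplier with eigenvalues $-1$ on the $n=1$ modes and $n$ on $n\ge 2$; the paper instead writes $DU_*(0)=L-K$ with $L$ invertible and $K$ finite-rank (Fredholm of index zero) and then proves injectivity by deriving the self-consistency relation $\alpha=2\alpha$ — the same computation, viewed slightly differently, with your reformulation being arguably more transparent though your appeal to "standard Fourier-multiplier estimates" for the Hölder isomorphism should really be replaced by the paper's finite-rank-perturbation argument to be fully rigorous.
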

\begin{proof}
{\it Step 1.} $0$ is a nondegenerate critical point of $W(\cdot,g^0)$.\\
Indeed, by combining \eqref{aba1} and \eqref{hatphidisc}, we easily obtain that  the canonical boundary datum $g^0:\so\to\so$ corresponding to $a=0$ is given by $g^0(z)=z$.  From \eqref{e3.31} we know that
\be\label{e4.4}
W(a,g^0) = \widehat{W}(a)+\frac{1}{2}\int_{\mathbb D}\left|\nabla\psi_{a,g^0}^*\right|^2.
\ee
On the other hand,  \eqref{e3.40} leads to
\be\label{e4.5}
\nabla\psi_{a,g^0}^*(x) = -2\frac{a(\overline{a}x-1)}{|1-\overline{a}x|^2},\quad\fo x\in\D,
\ee
and therefore
\be\label{e4.6}
\frac{1}{2}\int_{\mathbb D}\left|\nabla\psi_{a,g^0}^*\right|^2 = 2|a|^2\int_{\mathbb{D}}\frac{dx}{|1-\overline{a}x|^2}.
\ee
Thanks to the $|a|^2$ factor, if we differentiate \eqref{e4.6} with respect to $a$, and next let $a=0$, we obtain
\begin{equation}\label{e4.6bis}
\left. \nabla_a \left[\frac{1}{2}\int_{\mathbb D}\left|\nabla\psi_{a,g^0}^*\right|^2 \right]\right|_{a=0} =0.
\end{equation}
If we differentiate twice \eqref{e4.6} with respect to $a$, and next let $a=0$, then we are left with only one non zero term  (thanks to the $|a|^2$ factor again). More specifically, we obtain
\be\label{e4.7}\begin{split}
\left. \nabla_a^2\left[\frac{1}{2}\int_{\mathbb D}\left|\nabla\psi_{a,g^0}^*\right|^2 \right]\right|_{a=0} 
& = 4 \left. \int_{\mathbb{D}}\frac{dx}{|1-\overline{a}x|^2}\right|_{a=0}I_2  = 4 \int_{\mathbb{D}} dx \: I_2 = 4\pi I_2.
\end{split}\ee
By combining \eqref{e4.4} with \eqref{e4.6bis} and \eqref{e4.7}, we find that
\begin{equation}\label{e4.8}
\nabla_a W(0,g^0) = \nabla \widehat{W}(0), \quad
\nabla_a^2 W(0,g^0) = \nabla^2\widehat{W}(0)+4\pi I_2.
\end{equation}
We next compute $\nabla\widehat{W}(0)$ and $\nabla^2\widehat{W}(0)$. When $k=1$ and $d=1$, formula \eqref{hatwdisc} reads
\be
\label{e4.1}
\widehat{W}(a)=\pi\log\left( 1-|a|^2\right),\quad\forall\,  a\in\mathbb{D}.
\ee
 Identifying the complex number $a$ with a vector in $\R^2$,  the two first derivatives of $\widehat{W}$ are respectively given by:
\begin{gather}
\label{e4.2}
\nabla\widehat{W}(a)  = \frac{2\pi}{|a|^2-1}a\in\R^2\\
\label{e4.3}
\nabla^2\widehat{W}(a)  = \frac{2\pi}{|a|^2-1}I_2-\frac{4\pi}{\left(|a|^2-1\right)^2}a\otimes a \in M_2(\R).
\end{gather}
In particular, we obtain $\nabla\widehat{W}(0)=0$ and $\nabla^2\widehat{W}(0)=-2\pi I_2$. Plugging this into \eqref{e4.8} yields 
\be
\l{e310a}
\nabla_a W(0,g^0)=0, \quad \nabla_a^2 W(0,g^0)=2\pi I_2,
\ee 
so that $a=0$ is indeed a nondegenerate critical point of $W(\cdot,g^0)$.

\medskip
\noindent
{\it Step 2.} $DU_*(0)$ is invertible.\\
In our case, formula \eqref{Ndisc} becomes
\begin{equation}\label{Ndisc0}
N(a,g^0 e^{\im\psi}) = \frac{\partial\psi^*}{\partial\tau}-2 \frac{a\wedge z}{|z-a|^2}.
\end{equation}
Therefore
\begin{equation}\label{U*disc0}
U_*(\psi) = \frac{\partial\psi^*}{\partial\tau}-2 \frac{a(\psi)\wedge z}{|z-a(\psi)|^2},
\end{equation}
where $\psi\mapsto a(\psi)$ is smooth, $a(0)=0$ and $a(\psi)$ is a nondegenerate critical point of $W(\cdot,g^0 e^{\im\psi})$. 

Using \eqref{U*disc0} together with the fact that $a(0)=0$, we obtain that
\begin{equation}\label{DU*(0)}
DU_*(0)\psi = \frac{\partial\psi^*}{\partial\tau}-2(Da(0)\psi)\wedge z.
\end{equation}
In \eqref{DU*(0)}, $\psi$ is either a function in $C^{1,\beta}(\so;\R)$, or a class in $C^{1,\beta}(\so;\R)/\R$.
Thus the linear operator $DU_*(0):C^{1,\beta}(\so;\R)/\R \to \dot{C}^\beta(\so;\R)$ can be written $DU_*(0)=L-K$, where
\begin{equation*}
L(\psi):=\frac{\partial\psi^*}{\partial\tau}\ \text{and}\  K(\psi):=2(Da(0)\psi)\wedge z,\quad \fo \psi\in C^{1,\beta}(\so;\R)/\R.
\end{equation*}
The operator $L$ is an isomorphism, and $K$ is compact since it has finite range. As a consequence, $DU_*(0)$ is Fredholm of index zero and, in order to complete   Step {\it 2},  it suffices to prove that $DU_*(0)$   is injective.
For this purpose, we compute $Da(0)$ using the implicit equation 
\begin{equation}\label{aimplicit}
F(a(\psi),\psi):=\nabla_a W( a(\psi), g^0 e^{i\psi}) = 0
\end{equation}
satisfied by $a$.
By differentiating \eqref{aimplicit} with respect to $\psi$ we obtain (via \eqref{e310a})
\begin{equation}\label{Da1}
D_\psi F(0,0)\psi = -\nabla_a^2 W(0,g^0) Da(0)\psi = -2\pi  Da(0)\psi.
\end{equation}
Let us compute $D_\psi F(0,0)$. Recalling \eqref{Wdisc}, we find that
\begin{equation}\label{F(a,psi)}
\begin{split}
F(a,\psi) = & \nabla\widehat{W}(a)
+\nabla_a\left[\frac{1}{2}\int_{\mathbb D} \left|\nabla\psi_{a,g^0}^\ast+\nabla\psi^\ast\right|^2\right] \\
= & \nabla\widehat{W}(a) + \nabla_a\left[ \frac{1}{2}\int_{\mathbb D} \left|\nabla\psi_{a,g^0}^*\right|^2 \right] + \nabla_a\left[\int_{\mathbb D}\nabla\psi^*_{a,g^0}\cdot\nabla\psi^*\right].
\end{split}\end{equation}
The two first terms do not depend on $\psi$, and the last term depends linearly on $\psi$. Hence we obtain
\begin{equation}\label{DpsiF1}
D_\psi F(a,0)\psi = 
\nabla_a\left[\int_{\mathbb D}\nabla\psi^*_{a,g^0}\cdot\nabla\psi^*\right].
\end{equation}
Integrating by parts, using the explicit formula \eqref{e3.40} for $\psi^*_{a,g^0}$, and the fact that 
\bes
\int_{\so}\frac{\partial \psi^*}{\p\nu} = \int_{\so}\frac{\partial\psi}{\partial\tau} = 0,
\ees 
we find that
\begin{equation}\label{DpsiF2}
\int_{\mathbb D}\nabla\psi^*_{a,g^0}\cdot\nabla\psi^* = - 2\int_{\so} \log |1 - \overline{a}z|\frac{\partial\psi^*}{\partial\nu}.
\end{equation}
If we first plug \eqref{DpsiF2} into \eqref{DpsiF1} and next let $a=0$, then we obtain
\begin{equation}\label{DpsiF3}
D_\psi F(0,0)\psi = 2\int_{\so} z \frac{\partial\psi^*}{\partial\nu},
\end{equation}
and finally, using \eqref{Da1},
\begin{equation}\label{Da2}
Da(0)\psi = -\frac{1}{\pi}  \int_{\so} z \frac{\partial\psi^*}{\partial\nu}.
\end{equation}
We are now in  position to prove that $DU_*(0)$ is injective. Let $\psi\in \ker DU_*(0)$.  Then, recalling \eqref{DU*(0)}, we have
\begin{equation}\label{kerDU*}
\frac{\partial\psi^*}{\partial\tau} = 2(Da(0)\psi)\wedge z = \alpha\wedge z,
\end{equation}
where
\begin{equation}\label{alpha}
\alpha = -\frac{2}{\pi}  \int_{\so} z \frac{\partial\psi^*}{\partial\nu} \in\C.
\end{equation}
Since $\psi^*$ is harmonic and has zero average  on $\so$, we may write
\begin{equation}\label{psi*fourier}
\psi^*(re^{\im\theta})=\sum_{n\neq 0} a_n r^n e^{\im n\theta}.
\end{equation}
Hence \eqref{kerDU*} yields
\begin{equation}\label{kerDU*2}
\frac{\overline{\alpha}}{2\im}e^{\im\theta}-\frac{\alpha}{2\im}e^{-\im\theta}=\frac{\partial\psi^*}{\partial\tau}(e^{\im\theta}) = \sum_{n\neq 0} \im na_n e^{\im n\theta}.
\end{equation}
Identifying the Fourier coefficients, we obtain
\begin{equation}\label{coef}
a_n=0 \text{ for }|n|>1,\quad a_1 = -\frac{\overline{\alpha}}{2},\quad a_{-1}=-\frac{\alpha}{2},
\end{equation}
so that \eqref{psi*fourier} becomes
\begin{equation}\label{kerDU*3}
\psi^*(re^{\im\theta})=-\frac{\overline{\alpha}}{2} re^{\im\theta} -\frac{\alpha}{2} re^{-\im\theta}.
\end{equation}
By \eqref{kerDU*3}, we have 
\begin{equation}\label{intzdnupsi}
\int_{\so}z\frac{\partial\psi^*}{\partial\nu}  = -\frac{1}{2}\int_0^{2\pi} e^{i\theta}(\overline{\alpha}e^{\im\theta}+\alpha e^{-\im\theta}) d\theta  = -\pi\alpha.
\end{equation}
Plugging \eqref{intzdnupsi} into \eqref{alpha} we obtain $\alpha=2\alpha$, so that $\alpha=0$ and consequently $\psi^*=0$. Therefore, we have  $\psi=0$ modulo $\R$, and thus 
$DU_*(0)$ is invertible.
\end{proof}
\begin{corollary}
If a domain $\Omega$ is sufficiently  close  to the unit disc, in the sense that there exists a conformal representation $f:\mathbb{D}\to\Omega$ such that $\|f-\text{\rm Id}\|_{C^{1,\beta}}<\delta$ for sufficiently small $\delta$, then,  for small $\ve$, $E_\ve$ admits critical points with prescribed degree one. 
\end{corollary}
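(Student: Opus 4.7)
The plan is to combine, in sequence, the three main results already established in the paper: Proposition~\ref{propdisc}, Theorem~\ref{thmnds}, and Theorem~\ref{t8.1}. The argument is essentially a syllogism, with no new estimates required.

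First, I would invoke Proposition~\ref{propdisc}, which tells us that $\Omega_0 := \mathbb{D}$ satisfies the nondegeneracy conditions \emph{(ND1)} and \emph{(ND2)} in the case $k=1$, $d=1$, $d_1=1$: the point $a^0 = 0$ is a nondegenerate critical point of $W(\cdot, g^0)$ (with $g^0(z)=z$), and the differential $DU_{*,0,g^0}(0)$ is invertible.

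Next, I would apply Theorem~\ref{thmnds} with this choice of $\Omega_0$ and $f_0 := \text{Id}_{\mathbb{D}}$. The theorem provides a $\delta>0$ such that, for every holomorphic $f\in C^{1,\beta}(\overline{\mathbb{D}})$ with $\|f - \text{Id}\|_{C^{1,\beta}}<\delta$, the perturbed domain $\Omega := f(\mathbb{D})$ still satisfies \emph{(ND1)-(ND2)}. (Here we  use implicitly that the $f$'s coming close to the identity in $C^{1,\beta}$ lie in the open set $V_\beta$ of \eqref{e3.23a}, which is true because $V_\beta$ is open and contains $\text{Id}$.)

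Finally, for any such $\Omega = f(\mathbb{D})$, Theorem~\ref{t8.1} applied with $k=1$, $d_1=1$ (so $d=1$) yields an $\varepsilon_0 = \varepsilon_0(\Omega)>0$ such that, for every $\varepsilon\in(0,\varepsilon_0)$, the energy $E_\varepsilon$ admits a critical point $u_\varepsilon \in \mathcal{E}_1$, i.e., a critical point with prescribed degree one. This is exactly the conclusion of the corollary; there is no obstacle, as all the hard analytic work has already been carried out in Sections~\ref{prus}--\ref{sra}.
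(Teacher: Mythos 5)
Your proof is correct and follows precisely the route the paper intends: chaining Proposition~\ref{propdisc} (the disc satisfies \emph{(ND1)-(ND2)} for $k=1$, $d=1$), Theorem~\ref{thmnds} (stability of \emph{(ND1)-(ND2)} under $C^{1,\beta}$-small perturbations of the conformal representation), and Theorem~\ref{t8.1} (existence of critical points under \emph{(ND1)-(ND2)}). The paper does not write out a separate proof of this corollary because it is exactly this three-step syllogism; your added observation that $f$ close to $\mathrm{Id}$ in $C^{1,\beta}$ remains in the open set $V_\beta$ is the only point worth making explicit, and you have made it.
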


\section{In degree one, \enquote{most} of the domains are non degenerate}
\l{gen}

In this section, we assume that $k=1$ and $d=1$, and we prove that every domain can be approximated with  domains satisfying the nondegeneracy conditions {\it (ND1)-(ND2)}. More specifically, we establish the following result.
\begin{theorem}\label{ndg}
Assume that $k=1$ and $d=1$. Let $\Omega_0\subset\R^2$ be a simply connected bounded domain with $C^{1,\beta}$ boundary, and fix a conformal representation $f_0:\mathbb{D}\to\Omega_0$. 

Then, for every $\eta>0$, there exists a conformal representation $f:\mathbb{D}\to\Omega:=f(\D)$ such that $\|f_0-f\|_{C^{1,\beta}}<\eta$ and such that the corresponding domain  $\Omega$ satisfies (ND1)-(ND2).
\end{theorem}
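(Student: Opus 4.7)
The plan is to establish density of the set $\mathcal G \subset V_\beta$ of conformal representations $f$ such that $f(\mathbb D)$ satisfies \textit{(ND1)-(ND2)}. Combined with Theorem \ref{thmnds} (which asserts $\mathcal G$ is open) and Proposition \ref{propdisc} (which yields $\mathrm{Id} \in \mathcal G$), this completes the proof. So the task reduces to perturbing an arbitrary $f_0 \in V_\beta$ into $\mathcal G$.

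First, I pass to the unit disc via the conformal representation. For $k=1$, $d=1$, Lemma \ref{transformW} gives the explicit formula
\[
W_f^*(\alpha) := \widehat W^{f(\mathbb D)}(f(\alpha)) = \pi \log(1-|\alpha|^2) + \pi \log|f'(\alpha)|,
\]
a smooth function on $\mathbb D$ that tends to $-\infty$ at $\partial \mathbb D$ (since $|f'|$ is bounded above on $\overline{\mathbb D}$ because $f \in C^{1,\beta}$). Hence $W_f^*$ attains its maximum at some $\alpha^0(f) \in \mathbb D$, giving a critical point $a^0(f) := f(\alpha^0(f))$ of $\widehat W^{f(\mathbb D)}$, hence (by Proposition \ref{link} and \eqref{e2.24}) of $W(\cdot, g^{a^0(f)})$. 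I encode \textit{(ND1)} and \textit{(ND2)} as the non-vanishing of two real-valued functions on a neighborhood of $f_0$ where $a^0(f)$ depends continuously on $f$: $\Delta_1(f) := \det \nabla_a^2 W(a^0(f), g^{a^0(f)})$, and $\Delta_2(f)$, a Fredholm determinant associated with $DU_{*,a^0(f),g^{a^0(f)}}(0)$ (the operator is Fredholm of index zero by Remark \ref{gb1}). Both $\Delta_i$ are smooth in $f$.

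To prove density around $f_0$, I consider one-parameter analytic perturbations $f_t := f_0 + t p$ for a polynomial $p$; for $|t|$ small, $f_t \in V_\beta$ (since $V_\beta$ is open) and both $t \mapsto \Delta_i(f_t)$ are real-analytic on a small interval around $0$. If neither of them vanishes identically in $t$, their zero sets are discrete, and hence there exist $t$ arbitrarily close to $0$ with $\Delta_1(f_t) \Delta_2(f_t) \neq 0$, yielding $f_t \in \mathcal G$ arbitrarily close to $f_0$.

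The main obstacle, and the heart of the proof, is to exhibit a polynomial $p$ for which neither $\Delta_i(f_t)$ vanishes identically. The explicit formulas of Sections \ref{st}-\ref{sef} show that the Hessian of $W_f^*$ at $\alpha^0(f)$ involves $f'(\alpha^0)$ and $f''(\alpha^0)$ through the trace-free symmetric $2\times 2$ matrix $\nabla^2 \log|f'|$ (which is controlled by $f''/f'$), while the quadratic correction in \eqref{e2.24} and the operator $DU_*(0)$ depend on finitely many further Taylor coefficients of $f$ at $\alpha^0$. Choosing $p$ to modify $f''(\alpha^0)$ and higher jets independently, one moves within an open subset of a finite-dimensional jet space; since the degenerate configurations (those with $\Delta_1 = 0$ or $\Delta_2 = 0$) form a proper real-algebraic subvariety of the jet space (this is verified by inspection of the disc case in Proposition \ref{propdisc}, where both determinants are explicitly nonzero), a generic direction $p$ escapes this subvariety. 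This makes $t \mapsto \Delta_1(f_t)\Delta_2(f_t)$ a non-trivial real-analytic function, and the conclusion follows.
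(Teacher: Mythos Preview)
Your approach has two genuine gaps that make it fail as written.

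\textbf{First gap: circular dependence.} You define $\alpha^0(f)$ as the maximiser of $W_f^*$, and then you need $t\mapsto\alpha^0(f_t)$ to be real-analytic in order to conclude that $t\mapsto\Delta_1(f_t)$ is real-analytic. But smooth (let alone analytic) dependence of the maximiser on the parameter follows from the implicit function theorem only when the maximum is nondegenerate --- which is precisely $\Delta_1(f_0)\neq 0$, the thing you are trying to establish. If the maximum of $W_{f_0}^*$ is degenerate (the case of interest), the maximiser can bifurcate or vary non-smoothly under perturbation, and your analyticity argument breaks. Worse, $\Delta_2(f)$ is only defined once you have the map $g\mapsto a(g)$ from Corollary~\ref{a(g)}, and that map exists only where \textit{(ND1)} holds; so $\Delta_2$ is not even a function on a full neighborhood of $f_0$ unless you have \emph{already} arranged $\Delta_1\neq 0$. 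This forces a two-stage argument (first get \textit{(ND1)}, then \textit{(ND2)}), which is exactly what the paper does.

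\textbf{Second gap: the ``heart'' is a sketch, not a proof.} You claim the degenerate configurations form a proper real-algebraic subvariety in a finite-dimensional jet space, and that this is ``verified by inspection of the disc case''. But the disc corresponds to $f=\mathrm{Id}$, not to a point in the jet fibre over \emph{your} $f_0$; knowing a global nonzero value does not tell you the subvariety is proper along the curve $t\mapsto f_0+tp$. Moreover the base point $\alpha^0(f_t)$ moves with $t$, so ``modifying the jets at $\alpha^0$'' is not a linear operation in $p$.

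The paper avoids both difficulties by applying an abstract transversality theorem (Theorem~\ref{transv}) twice: first to the map $\widehat F(\alpha,f)=\nabla_\alpha[\widehat W^{\mathbb D}(\alpha)+\pi\log|f'(\alpha)|]$, showing directly that $D_f\widehat F$ is onto (a two-line computation with $h(z)=\lambda z$ or $\lambda z^2$), so that generically $\widehat F(\cdot,f)$ is transverse to $0$; combined with Proposition~\ref{magic}, this yields \textit{(ND1)}. Then, \emph{within} the open set where \textit{(ND1)} holds, the implicit function theorem gives a smooth $\alpha(f,\psi)$, and transversality is applied again to the map $U(f,\psi)$ of \eqref{ndg16}--\eqref{ndg18} to get \textit{(ND2)}. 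The transversality machinery handles precisely the regularity issue you stumble on: it does not require tracking a critical point analytically through a degenerate configuration.
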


The main idea of the proof of Theorem~\ref{ndg} is to use transversality. Among other ingredients, we will rely on the following abstract transversality result \cite[Theorem 3]{quinn70}.
\begin{theorem}\label{transv}
Let $X$, $\Lambda$, $Y$ be smooth separable Banach manifolds. Let $\Phi:X\times \Lambda \to Y$ be a smooth map.

Assume that:\\
1. for every $\lambda\in\Lambda$, $\Phi_\lambda:=\Phi(\cdot,\lambda):X\to Y$ is Fredholm.\footnote{That is, the linearized operator $D_x\Phi(x,\lambda)$ is Fredholm for every $x$ and every $\lambda$.}\\
2. $\Phi$ is transverse to $\lbrace 0\rbrace$, i.e.,  for every $(x,\lambda)$ such that $\Phi(x,\lambda)=0$, the differential $D\Phi(x,\lambda)$ is onto.

Then the set
$\left\lbrace  \lambda ; \: \Phi_\lambda \text{ is transverse to }\lbrace 0\rbrace \right\rbrace$
is dense in $\Lambda$.
\end{theorem}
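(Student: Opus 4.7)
My plan is to apply Theorem~\ref{transv} twice, with parameter space $\Lambda$ an open neighborhood of $f_0$ in the Banach manifold $V_\beta$ of $C^{1,\beta}$ holomorphic conformal representations of $\mathbb{D}$. The first pass secures (ND1) at every critical point of the canonical energy, and the second secures (ND2) at a selected critical point; the combined density then yields the theorem.

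First I would transport the problem to $\mathbb{D}$. Since $W(a,g^a)=\widehat{W}(a)$, the identity \eqref{transformhatW(a)} shows that $a^0=f(\alpha_0)$ is a critical point of $a\mapsto \widehat{W}^{f(\mathbb{D})}(a)$ iff $\alpha_0$ is a critical point of
\[
F_f(\alpha):=\pi\log(1-|\alpha|^2)+\pi\log|f'(\alpha)|.
\]
Since $F_f\to-\infty$ as $|\alpha|\to 1$ and $\log|f'|$ stays bounded for $f$ near $f_0$, $F_f$ attains its maximum in $\mathbb{D}$, so critical points always exist. For (ND1), let $H^W_f(\alpha_0)$ denote the partial Hessian $\partial^2_a W^{f(\mathbb{D})}(\,\cdot\,,g^{f(\alpha_0)})|_{f(\alpha_0)}$ transported to $\mathbb{D}$-coordinates via $f$, and set
\[
\Phi_1:\mathbb{D}\times S^1\times\Lambda\to\R^2\times\R^2,\quad\Phi_1(\alpha_0,w,f):=\bigl(\nabla F_f(\alpha_0),\,H^W_f(\alpha_0)w\bigr).
\]
A zero of $\Phi_1$ is a critical point $\alpha_0$ together with a unit kernel vector of $H^W_f(\alpha_0)$; so absence of zeros is precisely (ND1) holding at every critical point of $F_f$. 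Each $\Phi_{1,f}$ is smooth between $3$- and $4$-dimensional manifolds, hence Fredholm of index $-1$. Transversality of $\Phi_1$ at $\{0\}$ should follow from the infinite-dimensional freedom in $f$: the perturbations $\dot f(z)=c(z-\alpha_0)^2$ with $c\in\C$ already sweep all of $\R^2$ in $\pi\nabla\log|f'(\alpha_0)|=\pi\overline{f''(\alpha_0)/f'(\alpha_0)}$, and higher-order $\dot f(z)=c(z-\alpha_0)^j$ modulate $H^W_f(\alpha_0)$ through derivatives of $f$ and through the canonical datum $g^{f(\alpha_0)}$. Theorem~\ref{transv} then yields a dense $\Lambda_1\subset\Lambda$ on which $\Phi_{1,f}$ has no zeros, i.e., (ND1) holds.

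For $f\in\Lambda_1$, select $\alpha^0(f)\in\mathbb{D}$ to be the maximizer of $F_f$; by (ND1) and the implicit function theorem, $\alpha^0(f)$ is smooth in $f$, so the operator $DU_{*,f}(0):C^{1,\beta}(\po;\R)/\R\to\dot C^\beta(\po;\R)$ from Section~\ref{snd} is well-defined, smooth in $f$, and Fredholm of index $0$ (Remark~\ref{gb1}). With $S$ the unit sphere in $C^{1,\beta}(\po;\R)/\R$, set
\[
\Phi_2:S\times\Lambda_1\to\dot C^\beta(\po;\R),\quad\Phi_2(\psi,f):=DU_{*,f}(0)\,\psi.
\]
A zero is a unit kernel element of $DU_{*,f}(0)$, so absence of zeros is (ND2). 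Each $\Phi_{2,f}$ is Fredholm of index $-1$ (restriction of a Fredholm-index-$0$ operator to the codim-$1$ tangent of $S$). The hard part will be verifying transversality of $\Phi_2$: at a hypothetical zero $(\psi_0,f_0)$, I must exhibit perturbations $\dot f$ such that $[\partial_f DU_{*,f}(0)](\psi_0)[\dot f]$ spans a direct complement of $\mathrm{Im}\,DU_{*,f_0}(0)$ in $\dot C^\beta$, a finite-dimensional cokernel of dimension $\dim\ker DU_{*,f_0}(0)$. My plan is to exploit the explicit formula \eqref{Ndisc}, transported via Section~\ref{st}, which decomposes $DU_{*,f}(0)\psi_0$ as the tangential derivative $\partial\psi_0^*/\partial\tau$ plus a finite-rank correction tied to derivatives of the canonical datum at $\alpha^0(f)$; holomorphic perturbations of $f$ simultaneously deform the Riemann map (altering the conformal transport of $\psi_0^*$) and shift $\alpha^0(f)$ (altering the correction), and an independent analysis of these two mechanisms should cover the finite-dimensional cokernel.

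Granting the transversality verification, Theorem~\ref{transv} produces a dense $\Lambda_2\subset\Lambda_1$ on which $\Phi_{2,f}$ has no zeros; combined with Step~1, every $f\in\Lambda_2$ gives a domain satisfying both (ND1) and (ND2). Since $\Lambda_2$ is dense in $\Lambda_1$ and $\Lambda_1$ is dense in $\Lambda$, one can find $f\in\Lambda_2$ with $\|f-f_0\|_{C^{1,\beta}}<\eta$. The principal obstacle is the transversality verification for $\Phi_2$: showing that conformal perturbations of the domain are rich enough to move the boundary operator $DU_*$ freely in directions not already reachable by inner perturbations of $\psi$. This requires careful exploitation of the explicit disc formulas of Section~\ref{sef} together with the conformal transport rules of Section~\ref{st}, and occupies the bulk of the proof.
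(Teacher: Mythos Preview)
Your proposal does not address the stated theorem at all. Theorem~\ref{transv} is the abstract transversality result of Quinn: given a smooth map $\Phi:X\times\Lambda\to Y$ with each $\Phi_\lambda$ Fredholm and $\Phi$ transverse to $\{0\}$, the set of $\lambda$ for which $\Phi_\lambda$ is transverse to $\{0\}$ is dense in $\Lambda$. The paper does not prove this; it is quoted from \cite{quinn70}. What you have written is instead a sketch of an \emph{application} of Theorem~\ref{transv} to establish Theorem~\ref{ndg} (generic nondegeneracy of domains in degree one). These are different statements, and your text contains no argument for the Sard--Smale/parametric transversality mechanism that underlies Theorem~\ref{transv} itself.

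Even read as a proof of Theorem~\ref{ndg}, the proposal diverges from the paper and leaves the essential steps open. The paper proceeds in two stages: first it applies transversality to $\widehat F(\alpha,f)=\nabla_\alpha[\widehat W^{\mathbb D}(\alpha)+\pi\log|f'(\alpha)|]$, checking surjectivity of $D_f\widehat F$ by the explicit perturbations $h(z)=\lambda z$ and $h(z)=\lambda z^2$; this, together with the fact that $\widehat W^\Omega$ attains its maximum, yields a nondegenerate critical point $a_0$ of $\widehat W^\Omega$. The crucial bridge to (ND1) is then Proposition~\ref{magic}, a direct computation (specific to $k=d=1$) showing that the Hessians of $\widehat W^{\mathbb D}+P(\cdot,f)$ and $W^{\mathbb D}(\cdot,g^0)+P(\cdot,f)$ at $0$ have the \emph{same determinant}, so nondegeneracy of $\widehat W$ transfers automatically to $W(\cdot,g^{a_0})$. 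You bypass this entirely by building a map $\Phi_1$ that couples the gradient of $F_f$ with a kernel vector of $H^W_f$, but you only assert that the needed transversality ``should follow'' from higher-order perturbations; you never verify it, and the dependence of $H^W_f$ on $f$ through the canonical datum $g^{f(\alpha_0)}$ makes this nontrivial. Similarly, for (ND2) the paper checks transversality of $U$ by reducing, via \eqref{ndg22}, to the already-established surjectivity of $D_f F=D_f\widehat F$; your proposal acknowledges this verification as ``the principal obstacle'' and does not carry it out. In short, the paper's route is both more concrete and complete: it isolates the algebraic miracle (Proposition~\ref{magic}) that makes the $d=1$ case work, whereas your sketch defers precisely the computations that constitute the proof.
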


Note that, if $X$ and $Y$ are finite dimensional, then condition {\it 1.}  is automatically satisfied.

Another ingredient of the proof is the following fact, which relates non degenerate critical points of $\widehat{W}$ to non degenerate critical points of $W(\cdot,g^a)$.

\begin{proposition}\label{magic}
Assume that $k=1$ and $d=1$. Let $a_0\in\Omega$ be a non degenerate critical point of $\widehat{W}^\Omega$. Then $a_0$ is a non degenerate critical point of $W(\cdot,g^{a_0})$.
\end{proposition}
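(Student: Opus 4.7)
The plan is to reduce the Hessian computation to the unit disc via a conformal change of variable, and then to exploit the harmonicity of $\log|f'|$. The critical-point statement itself is a one-line consequence of the decomposition \eqref{e2.24}: writing $W(a,g^{a_0})=\widehat W(a)+Q(a)$ with $Q(a):=\tfrac{1}{2}|\psi_{a,g^{a_0}}|^2_{H^{1/2}}\ge 0$, we have $Q(a_0)=0$ because $g^{a_0}=g^{a_0}$ forces $\psi_{a_0,g^{a_0}}\equiv 0$ modulo constants; hence $a_0$ is a minimum of $Q$ with $\nabla Q(a_0)=0$, which together with $\nabla\widehat W(a_0)=0$ gives $\nabla W(a_0,g^{a_0})=0$.

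For the Hessian, I would fix a conformal representation $f:\mathbb D\to\Omega$ with $f(0)=a_0$ and $f'(0)=\lambda>0$, so that $Df(0)=\lambda I_2$ and, by \eqref{transformga}, $g^{a_0}\circ f=g^0$. Lemma~\ref{transformW} (with $k=1$, $d_1=1$) then gives the two identities
\[
\widehat W^\Omega(f(\alpha))=\widehat W^{\mathbb D}(\alpha)+\pi\log|f'(\alpha)|,\qquad
W^\Omega(f(\alpha),g^{a_0})=W^{\mathbb D}(\alpha,g^0)+\pi\log|f'(\alpha)|.
\]
Proposition~\ref{propdisc} (Step~1 of its proof) asserts that $0$ is a critical point of both $\widehat W^{\mathbb D}$ and $W^{\mathbb D}(\cdot,g^0)$, with Hessians $-2\pi I_2$ and $+2\pi I_2$ respectively. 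Differentiating the two identities twice at $\alpha=0$ (the first-order terms vanish since $a_0$ is critical for both functionals, and $Df(0)=\lambda I_2$), I obtain
\[
\lambda^2\nabla^2\widehat W^\Omega(a_0)=-2\pi I_2+M,\qquad
\lambda^2\nabla^2 W^\Omega(a_0,g^{a_0})=2\pi I_2+M,
\]
with $M:=\pi\nabla^2\log|f'|(0)$.

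The conclusion then rests on a single observation: since $f$ is a non-vanishing holomorphic function on $\mathbb D$, $\log f'$ is holomorphic, so $\log|f'|=\Re\log f'$ is harmonic. Consequently $\operatorname{tr}M=\pi\Delta\log|f'|(0)=0$, and $M$ is a symmetric traceless real $2\times 2$ matrix with eigenvalues $\pm m$ for some $m\ge 0$. The matrices $-2\pi I_2+M$ and $2\pi I_2+M$ therefore have eigenvalues $-2\pi\pm m$ and $2\pi\pm m$ respectively, and both fail to be invertible \emph{precisely} when $m=2\pi$. Thus $\nabla^2\widehat W^\Omega(a_0)$ is invertible if and only if $\nabla^2 W^\Omega(a_0,g^{a_0})$ is; the hypothesized non-degeneracy therefore transfers as claimed.

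The main subtlety I anticipate is that $M$ need not vanish in general, so the naive identity $\nabla^2 W(a_0,g^{a_0})=-\nabla^2\widehat W(a_0)$ that holds in the disc does not persist; the harmonicity of $\log|f'|$, which forces $M$ to be traceless, is exactly what makes the two invertibility conditions coincide.
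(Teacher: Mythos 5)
Your proof is correct and follows the same overall strategy as the paper's: conformally transport to $\mathbb{D}$ with $f(0)=a_0$, exploit the two transformation identities from Lemma~\ref{transformW}, and compare the Hessians $\nabla^2\widehat W^{\mathbb D}(0)=-2\pi I_2$ and $\nabla^2_\alpha W^{\mathbb D}(0,g^0)=2\pi I_2$ from Proposition~\ref{propdisc}. The one place where you diverge is the treatment of the extra term $M=\pi\nabla^2\log|f'|(0)$. The paper first uses the criticality of $a_0$ to deduce $f''(0)=0$, then Taylor-expands $\log|f'|$ to obtain the explicit matrix $M=\pi M_{f^{(3)}(0)/f'(0)}$, and finally verifies directly that $\det(M_z-2I_2)=4-|z|^2=\det(M_z+2I_2)$. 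You instead observe at once that $\log|f'|=\Re\log f'$ is harmonic on the simply connected disc (so $\operatorname{tr}M=0$), skip the explicit form of $M$ and the step $f''(0)=0$, and finish with the eigenvalue observation that a symmetric traceless $2\times 2$ matrix $M$ makes $\pm 2\pi I_2+M$ singular for exactly the same $M$. This is a genuinely cleaner route: it isolates the structural fact (harmonicity of $\log|f'|$) that underlies the paper's coincidence $\det(M_z-2I_2)=\det(M_z+2I_2)$, and it avoids the auxiliary computation \eqref{magic2}--\eqref{magic4}. The critical-point part via \eqref{e2.24} and $\psi_{a_0,g^{a_0}}\equiv 0$ is also correct and matches the paper.
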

\begin{proof}
Let us first remark that $a_0$ is automatically a critical point of $W^{\Omega_0}(\cdot,g^{a_0})$.\footnote{This is not specific to the case where $k=1$ and $d=1$, but holds for arbitrary $k$ and degrees $d_j$, $j\in\llbracket 1,k\rrbracket$.} Indeed, using \eqref{e2.24}, in which each term is smooth thanks to the formulas in Sections~\ref{st} and \ref{sef}, and the fact that (by definition) we have $\psi_{a,g^a}=0$, we find that
\begin{equation}\label{magic0}
\nabla_a W(a,g)|_{g=g^a} = \nabla\widehat{W}(a).
\end{equation}

It remains to prove that $a_0$ is non degenerate as a critical point of $W^\Omega(\cdot,g^{a_0})$.

Let $f:\mathbb{D}\to\Omega$ be a conformal representation  and set  $\alpha_0:=f^{-1}(a_0)$. Then $\widetilde{f}(0)=a_0$, where
\begin{equation*}
\widetilde{f}(z)=f\left( \frac{ z+\alpha_0}{1+\overline{\alpha_0}z}\right).
\end{equation*}
Therefore, by replacing  $f$ with $\widetilde f$, we may actually assume that $f(0)=a_0$. In view of \eqref{transformga} and of the fact that, in the unit disc, we have $g^0=\textrm{Id}$, we obtain
\be
\l{ma1}
g^{a_0}\circ f=g^0=\textrm{Id}.
\ee
Recall  that, by Lemmas \ref{transformW} and \ref{discexplicit} and by \eqref{hatwdisc} we have
\begin{gather}
\label{magic1a}
\widehat{W}^\Omega(f(\alpha))=\widehat{W}^{\mathbb D}(\alpha)+P(\alpha,f),\\
\l{magic1b}
W^\Omega(f(\alpha),g^{a_0}) = W^{\mathbb D}(\alpha,g^0)+P(\alpha,f),
\end{gather}
where
\be
\l{magic1c}
\widehat W^\D(\alpha)=\pi\log (1-|\alpha|^2),\ P(\alpha,f):=\pi\log |f'(\alpha)|
\ee
and
\be
\l{magic1d}
W^\D(\alpha,g^0)\text{ is given by \eqref{Wdisc} with }\psi=0. 
\ee
By \eqref{magic1a}-\eqref{magic1d}  and the discussion at the beginning of the proof of Proposition \ref{propndstable}, the assumption that $a_0$ is a non degenerate critical point of $\widehat W^\Omega$ is equivalent to the fact that $0$ is a non degenerate critical point of $\widehat W^\D+P(\cdot, f)$. Similarly, the desired conclusion   (that $a_0$ is a nondegenerate critical point of $W^\O(\cdot, g^{a_0})$) is equivalent to the fact that 
$0$ is non degenerate as a critical point of $W^{\mathbb D}(\cdot,g^0)+P(\cdot,f)$.

Since
\begin{equation}\label{magic2}
\nabla\left[ \widehat{W}^\mathbb{D}+P(\cdot,f) \right](\alpha) = \frac{-2\pi\alpha}{1-|\alpha|^2}+\pi \frac{\overline{f''(\alpha)}}{\overline{f'(\alpha)}}\in\mathbb{C}\simeq\R^2,
\end{equation}
and since $0$ is a critical point of $\widehat{W}^{\mathbb{D}}+P(\cdot,f)$, we have $f''(0)=0$. 

In order to calculate the Hessian of $P(\cdot,f)$ at the origin, we find the second order Taylor expansion of $P(\cdot,f)$:
\begin{equation}\label{magic3}
\begin{split}
P(\alpha,f) & =\pi\log \left|f'(0)+f^{(3)}(0)\alpha^2 + o(|\alpha|^2)\right| \\
& = \pi \log |f'(0)| + \frac{\pi}{2}\log \left( \left|1+\frac{f^{(3)}(0)}{f'(0)}\alpha^2+o(|\alpha |^2)\right|^2 \right) \\
& = P(0,f) + \frac{\pi}{2}\log\left( 1+2\Re\left(\frac{f^{(3)}(0)}{f'(0)}\alpha^2\right)  +o(|\alpha|^2) \right) \\
& = P(0,f) + \frac{\pi}{2}\left( 2\Re\left(\frac{f^{(3)}(0)}{f'(0)}\alpha^2\right)  +o(|\alpha|^2)\right)\\
& = P(0,f) + \pi \left(\frac{\overline{f^{(3)}(0)}}{\overline{f'(0)}}\overline{\alpha} \right)\cdot\alpha +o(|\alpha|^2).
\end{split}
\end{equation}
In the last equality, $z\cdot w$ stands for the real scalar product of the complex numbers $z$ and $w$ (identified with vectors in $\R^2$). 
As a consequence, we have
\begin{equation}\label{magic4}
\nabla_\alpha^2 P(0,f) = \pi M_{f^{(3)}(0)/f'(0)},
\end{equation}
where, for a complex number $z\in\mathbb{C}$, $M_z$ denotes the matrix corresponding to the $\mathbb{R}$-linear map \
\bes
T:\mathbb{C}\to\mathbb{C},\ \xi\xmapsto{T}\overline{z\xi},
\ees
i.e.,
\begin{equation*}
M_z=\left(\begin{array}{cc} \Re\: z & -\Im\: z \\ -\Im\: z & -\Re\: z \end{array}\right).
\end{equation*}
Recall that, from \eqref{e4.3} and \eqref{e310a},   it holds
\begin{equation}\label{magic5}
\nabla^2\widehat{W}^{\mathbb D}(0)=-2\pi I_2 \quad\text{and}\quad \nabla_\alpha^2 W(0,g^0) = 2\pi I_2.
\end{equation}
By combining \eqref{magic4} with \eqref{magic5}, we obtain 
\begin{gather}\label{magic6}
\nabla^2\left[ \widehat{W}^\mathbb{D}+P(\cdot,f) \right](0) = \pi M_{f^{(3)}(0)/f'(0)} - 2\pi I_2, \\
\label{magic7}
\nabla^2\left[W^\mathbb{D}(\cdot,g^0)+P(\cdot,f)\right](0) = \pi M_{f^{(3)}(0)/f'(0)} + 2\pi I_2.
\end{gather}
We claim that the two Hessian matrices \eqref{magic6} and \eqref{magic7}  have the same determinant. In fact, for every $z\in\mathbb{C}$, we have
\bes
\begin{aligned}
&\det \left( M_z-2 I_2\right) 
=\left| \begin{matrix} \Re  z -2 & -\Im  z\\
-\Im z & -\Re  z -2 \end{matrix}\right| = (2 - \Re  z)(\Re z +2)- (\Im z)^ 2 \\
& = \left| \begin{matrix}2 + \Re z & -\Im z \\
-\Im z & 2-\Re z \end{matrix}\right|
 = \mathrm{det}\left(2 I_2 + M_z \right).
\end{aligned}
\ees
The Hessian matrix in \eqref{magic6} being non degenerate by assumption, so is the Hessian in \eqref{magic7}. Therefore $0$ is a non degenerate critical point of $W^{\mathbb D}(\cdot,g^0)+P(\cdot,f)$, which means that $a_0$ is a non degenerate critical point of $W^\Omega(\cdot,g^{a_0})$.
\end{proof}

Before proceeding to the proof of Theorem~\ref{ndg}, we introduce some notation.
For $\alpha\in\mathbb{D}$ and $f\in V_\beta$, let 
\begin{equation}\label{ndg1}
\widehat{F}(\alpha,f)=\nabla_\alpha\left[ \widehat{W}^{\mathbb D}(\alpha)+\pi \log |f'(\alpha)| \right],
\end{equation}
so that $\widehat{F}:\mathbb{D}\times V_\beta \to\R^2$ is smooth (thanks to the computations in Lemma~\ref{wsmooth}), and, by Lemma~\ref{transformW}, a point $a=f(\alpha)\in\Omega=f(\mathbb{D})$ is a non degenerate critical point of $\widehat{W}^\Omega$ if and only if $\alpha$ is a non degenerate zero of $\widehat{F}(\cdot,f)$.

Similarly, $g_0\in C^{1,\beta}(\mathbb{S}^1;\mathbb{S}^1)$ being fixed, we define, for $\alpha\in\mathbb{D}$, $f\in V_\beta$ and $\psi\in C^{1,\beta}(\mathbb{S}^1;\R)$,
\begin{equation}\label{ndg2}
F(\alpha,\psi,f)=\nabla_\alpha\left[ W^{\mathbb D}(\alpha,g_0 e^{\im\psi})+\pi \log |f'(\alpha)| \right],
\end{equation}
so that $F:\mathbb{D}\times C^{1,\beta}(\mathbb{S}^1;\R)\times V_\beta \to \R^2$ is smooth. By Lemma \ref{transformW} and the discussion at the beginning of the proof of Proposition \ref{propndstable},   a point $a=f(\alpha)\in\Omega=f(\mathbb{D})$ is a non degenerate critical point of $W^{\Omega}(\cdot,(g_0 e^{\im\psi} )\circ f^{-1})$ if and only if $\alpha$ is a non degenerate zero of $F(\cdot,\psi,f)$.

Using \eqref{Wdisc}, we may split 
\begin{equation}\label{ndg3a}
\widehat{F}(\alpha,f) = F_1(\alpha)+F_2(\alpha,f)
\ee
and
\be
\l{ndg3b} 
F(\alpha,\psi,f) =F_1(\alpha)+F_2(\alpha,f)+F_3(\alpha,\psi),
\end{equation}
where the smooth maps $F_1$, $F_2$ and $F_3$ are respectively given by
\begin{align}
&F_1(\alpha) =\nabla\widehat{W}^{\mathbb D}(\alpha),\label{ndg4} \\
&F_2(\alpha,f) = \nabla_\alpha\left[ \pi\log|f'(\alpha)| \right] = \pi\frac{\overline{f''(\alpha)}}{\overline{f'(\alpha)}}\in\C\simeq\R^2, \label{ndg5}\\
&F_3(\alpha,\psi)  = \nabla_\alpha\left[ \frac{1}{2}\int_{\mathbb D} \left|\nabla(\psi^*_{\alpha,g_0}+\psi^*)\right|^2 \right].\label{ndg6}
\end{align}
\begin{proof}[Proof of Theorem~\ref{ndg}]
The proof is divided into two steps. In each step we apply the abstract transversality result (Theorem~\ref{transv}) in order to prove  that a certain nondegeneracy is generic.

\smallskip
\noindent
{\it Step 1.}  We may assume that $\widehat{W}^{\Omega_0}$ has a non degenerate critical point  $a_0\in\Omega_0$.\\
Indeed, we claim that $\widehat{F}$ is transverse to $\lbrace 0\rbrace$. This will follow if we prove that  $D_f \widehat{F}(\alpha,f)$ is surjective for every $(\alpha,f)$. In turn, surjectivity is established as follows. For every $h\in X_\beta$ we have
\begin{equation}\label{ndg7}
D_f\widehat{F}(\alpha,f)\cdot h = D_f F_2(\alpha,f)\cdot h =
\pi \frac{\overline{f'(\alpha)h''(\alpha)-f''(\alpha)h'(\alpha)}}{\overline{f'(\alpha)}^2}\in\C\simeq\R^2.
\end{equation}
If $f''(\alpha)\neq 0$, then the choice $h(z)=-\lambda z$ (with  $\lambda\in\C$ arbitrary constant) leads to 
\begin{equation*}
\pi\frac{\overline{f''(\alpha)}}{\overline{f'(\alpha)}^2}\overline{\lambda}\in \mathrm{range}\:D_f\widehat{F}(\alpha,f),
\end{equation*}
so $D_f\widehat{F}(\alpha,f)$ is surjective. If $f''(\alpha)=0$, then we take $h(z)=\lambda z^2$ and obtain
\begin{equation*}
\frac{2\pi}{\overline{f'(\alpha)}}\overline{\lambda}\in \mathrm{range}\:D_f\widehat{F}(\alpha,f),
\end{equation*}
and thus the claim is proved.

Therefore we can apply the transversality theorem: we can choose $f$ arbitrarily close to $f_0$, such that $\widehat{F}(\cdot,f)$ is transverse to $\lbrace 0\rbrace$. Thus, by slightly perturbing $f_0$, we may actually assume that $\widehat{F}(\cdot,f_0)$ is transverse to $\lbrace 0\rbrace$.

Since
\begin{equation}\label{ndg8}
\begin{aligned}
\widehat{W}^{\Omega_0}(f_0(\alpha))&=\widehat{W}^{\mathbb D}(\alpha)+\pi \log |f_0'(\alpha)|\\
& 
= \pi \log (1-|\alpha|^2) + \pi \log |f_0'(\alpha)| 
\longrightarrow -\infty\quad\text{as }|\alpha|\to 1,
\end{aligned}
\end{equation}
there exists some $a_0\in\Omega$, such that $\widehat{W}^\O(a_0)=\max_{\Omega_0} \widehat{W}^\O$. Hence $a_0$ is a critical point of $\widehat{W}^{\Omega_0}$, which is equivalent to the fact that $\alpha_0:=f_0^{-1}(a_0)$ is a zero of $\widehat{F}(\cdot,f_0)$. Since the map $\widehat{F}(\cdot,f_0)$ is transverse to $\lbrace 0\rbrace$, its differential is surjective at $\alpha_0$. Therefore,  $\alpha_0$ is a non degenerate zero of $\widehat{F}(\cdot,f_0)$, which means that $a_0$ is a non degenerate critical point of $\widehat{W}^{\Omega_0}$. The proof of Step {\it 1} is complete.

\smallskip
\noindent
{\it Step 2.}  There exists $f$ arbitrarily close to $f_0$, such that $\Omega=f(\mathbb{D})$ satisfies {\it (ND1)-(ND2).}\\
Thanks to Step {\it 1} and Proposition~\ref{magic}, possibly after slightly perturbing  $f_0$, we may assume that there  exists some $a_0=f_0(\alpha_0)\in\Omega_0$, which is a non degenerate critical point of both $\widehat{W}^{\Omega_0}$ and $W^{\Omega_0}(\cdot,g_0)$ (with $g_0=g^{a_0}$).

Since $\widehat{F}(\alpha_0,f_0)=0$, and since $D_\alpha \widehat{F}(\alpha_0,f_0)$ is invertible, we can apply the implicit function theorem to $\widehat{F}$. There exists an open neighborhood $\mathcal{V}_1$ of $f_0$ in  $V_\beta$, and a smooth function $\alpha:\mathcal{V}_1\to\mathbb{D}$, such that, for every $f\in\mathcal{V}_1$ and for every $\alpha$ sufficiently close to $\alpha_0$, we have
\begin{equation}\label{ndg9}
\widehat{F}(\alpha,f)=0 \Longleftrightarrow \alpha=\alpha(f).
\end{equation}
By Proposition \ref{p301} and by the invertibility  of $D_\alpha \widehat{F}(\alpha_0,f_0)$, we 
 may choose the open neighborhood $\mathcal{V}_1$ such that, for every $f\in\mathcal{V}_1$, the point 
 $a=a(f)=f(\alpha(f))\in\Omega=f(\mathbb{D})$ is doubly non degenerate, that is:  non degenerate as a critical point of $\widehat{W}^\Omega$ and non degenerate as a critical point of $W^\Omega(\cdot,g^a)$. In particular, every domain $\Omega=f({\mathbb D})$, with $f\in\mathcal{V}_1$, satisfies {\it (ND1)}.

Again by the second nondegeneracy property of every  $f\in\mathcal{V}_1$, we may consider the map $U_{*,a,g^a}$, defined as in \eqref{U*},  and  corresponding to $a=a(f)$. In order to complete Step {\it 2}, we have to  find some $f$ arbitrarily close to $f_0$, such that the map $U_{*,a,g^a}$ is a local diffeomorphism at the origin.
 To this end we will again rely on the transversality theorem.
More specifically, 
we define, exactly as in formula \eqref{U1} in the proof of Theorem~\ref{thmnds}, the smooth map
\begin{equation}\label{ndg15}
U:\mathcal{V}_1 \times \mathcal{V}_2/\R \longrightarrow \dot{C}^\beta(\mathbb{S}^1;\R).
\end{equation}
Recall that $\mathcal{V}_1$ is an open neighborhood of $f_0$ in $V_\beta$, that  $\mathcal{V}_2$ is an open neighborhood of the origin in $C^{1,\beta}(\mathbb{S}^1;\R)$, and that
\begin{equation}\label{ndg16}
U(f,\psi)=N^{\mathbb D}(\widetilde{\alpha}(\psi,f),g_0 e^{\im \psi})\quad\forall\,  (f,\psi)\in \mathcal{V}_1\times\mathcal{V}_2/\R.
\end{equation}
Here,  $\widetilde{\alpha}$ is the smooth implicit solution of 
\begin{equation}\label{ndg17}
F(\widetilde{\alpha}(\psi,f),\psi,f) = 0
\end{equation}
obtained in Proposition~\ref{propndstable}.
We recall the following fact established in the proof of Theorem~\ref{thmnds}:  the map $U_{*,a(f),g^{a(f)}}$ is a local diffeomorphism at the origin if and only if $U(f, \cdot)$ is a local diffeomorphism at $-\psi_{\alpha(f),g_0}$.

Recalling the formula \eqref{Ndisc} for $N^\mathbb{D}$, we obtain the following explicit formula for $U$:
\begin{equation}\label{ndg18}
U(f,\psi) = \frac{\partial\psi^*}{\partial\tau}+2 \frac{\alpha_0\wedge z}{|z-\alpha_0|^2}-2 \frac{\alpha(\psi,f)\wedge z}{|z-\alpha(\psi,f)|^2}.
\end{equation}
Hence, for every $(f,\psi)\in\mathcal{V}_1\times\mathcal{V}_2/\R$, we have
\begin{equation*}
\begin{aligned}
D_\psi U (f,\psi)\cdot\zeta =& \frac{\partial\zeta^*}{\partial\tau}-2 \frac{\left(D_\psi\widetilde{\alpha}(\psi,f)\cdot\zeta\right)\wedge z}{|z-\widetilde{\alpha}(\psi,f)|^2}\\
&-4\frac{(z-\widetilde{\alpha}(\psi,f))\cdot\left(D_\psi\widetilde{\alpha}(\psi,f)\cdot\zeta\right)}{|z-\widetilde{\alpha}(\psi,f)|^4} \widetilde{\alpha}(\psi,f) \wedge z.
\end{aligned}
\end{equation*}
In particular $D_\psi U (f,\psi)$ is a Fredholm operator of index zero, since it can be written as $L-K$, where 
\bes
L:C^{1,\beta}(\so ; \R)/\R\to\dot C^\beta(\so ; \R),\ \zeta\xmapsto{L}\frac{\p\zeta^*}{\partial\tau}
\ees 
is invertible and $K$ has finite range. Hence  $U(f,\cdot)$ is a smooth Fredholm map for every $f\in\mathcal{V}_1$. 

We want to apply the transversality theorem to $U$. We already know that assumption {\it 1.} of the transverality theorem is satisfied. It 
remains to check that $U$ is transverse to $0$. To this end we compute the differential of $U$ at some point $(f,\psi)$, using \eqref{ndg18}:
\begin{equation}\label{ndg19}
\begin{split}
DU(f,\psi)\cdot (h,\zeta) & = \frac{\partial\zeta^*}{\partial\tau}-2 \frac{\left(D\widetilde{\alpha}(\psi,f)\cdot (h,\zeta)\right)\wedge z}{|z-\widetilde{\alpha}(\psi,f)|^2}\\
& \quad -4\frac{(z-\widetilde{\alpha}(\psi,f))\cdot\left(D\widetilde{\alpha}(\psi,f)\cdot (h,\zeta)\right)}{|z-\widetilde{\alpha}(\psi,f)|^4} \widetilde{\alpha}(\psi,f) \wedge z.
\end{split}
\end{equation}
Let us show that $DU(f,\psi)$ is onto. Let $\Psi\in\dot{C}^\beta(\mathbb{S}^1;\R)$. Then there exists some $\zeta\in C^{1,\beta}(\mathbb{S}^1;\R)/\R$ such that
\begin{equation}\label{ndg20}
\frac{\partial\zeta^*}{\partial\tau}=\Psi.
\end{equation}
We claim that there exists $h=h_\zeta\in X_\beta$ such that
\begin{equation}\label{ndg21}
D\widetilde{\alpha}(\psi,f)\cdot (h_\zeta,\zeta)=0.
\end{equation}
Then, plugging \eqref{ndg21} and \eqref{ndg20} into \eqref{ndg19}, we obtain
\begin{equation*}
DU(f,\psi)\cdot (h_\zeta,\zeta)=\Psi,
\end{equation*}
and thus $DU(f,\psi)$ is onto.

In order to complete Step {\it 2}, it remains 
to prove the existence of $h_\zeta$. From the implicit equation \eqref{ndg17} satisfied by $\widetilde{\alpha}$, we obtain
\begin{equation}\label{ndg22}
\begin{aligned}
&D\widetilde{\alpha}(\psi,f)\cdot (h,\zeta) \\
&=-  D_\alpha F (\widetilde{\alpha}(\psi,f),\psi,f)^{-1}\left[D_f F (\widetilde{\alpha}(\psi,f),\psi,f)\cdot h + D_\psi F (\widetilde{\alpha}(\psi,f),\psi,f)\cdot \zeta  \right].
\end{aligned}
\end{equation}
Since $D_f F (\alpha,\psi,f) = D_f \widetilde F(\alpha,f)$ is surjective (by Step {\it 1}), we may clearly choose $h_\zeta$ such that \eqref{ndg22} holds.

Therefore we can apply the transversality theorem to $U$: the set of $f$ such that $U(f,\cdot)$ is transverse to $\lbrace 0\rbrace$ is dense.

Let $\eta>0$. We can choose $f\in\mathcal{V}_1$, such that $\|f-f_0\|_{C^{1,\beta}}<\eta$, and $U(f,\cdot)$ is transverse to $\lbrace 0\rbrace$. In particular, the differential of $U(f,\cdot)$ at $-\psi_{\alpha(f),g_0}$ is onto, which implies that the differential is invertible (since it is a zero index Fredholm  operator). Hence $U(\cdot,f)$ is a local diffeomorphism at $-\psi_{\alpha(f),g_0}$, which is equivalent to $U_{*,a(f),g^{a(f)}}$ being a local diffeomorphism at the origin, i.e. $\Omega=f(\mathbb{D})$ satisfies {\it (ND2)}.

Step {\it 2} and the proof of Theorem \ref{ndg} are complete.
\end{proof}

\begin{remark}
In Theorem~\ref{ndg} we have established that nondegeneracy of the domain is generic in the case of prescribed degree $d=1$. Some, but not all, of the ingredients of our proof can be generalized to arbitrary $d$.
For example, it is possible to adapt our arguments and obtain the transversality of $\widehat{F}$  to $0$ when $d$ is arbitrary. However, this does not lead to the conclusion that {\it (ND1)} is generically true. The reason is that when $d\neq\pm 1$, we cannot rely on  \eqref{ndg8} anymore, and we actually do not know whether $\widehat W^\O$ does have critical points. 
A similar difficulty occurs in Step {\it 2}. Indeed, the first ingredient in  Step {\it 2} is 
%
Proposition~\ref{magic}, yielding the existence of a  
non degenerate critical point $a_0$ of 
$W(\cdot,g^{a_0})$. Clearly, our proof of Proposition~\ref{magic} is specific to the case $d=1$.

However, it is plausible the the transversality arguments extend to an arbitrary degree $d$, and thus the main difficulty arises in the existence of critical points of $\widehat W^\O$. It would be interesting to investigate, e.g. by topological methods in the spirit of  \cite{able}, whether such points do exist.  
\end{remark}

\section*{Appendix}
\addcontentsline{toc}{section}{Appendix}
The following is a $C^{1,\beta}$ variant of \cite[Lemmas A1, A2]{bbh93}. 
\begin{lemma}\label{estimlemma}
Let $G\subset\R^n$ be a bounded open set of class $C^{1,\beta}$. Assume that
\begin{equation}\label{eqw}
\left\lbrace
\begin{array}{rll}
\Delta w & = f & \text{in }G\\
w & = \varphi & \text{on }\partial G
\end{array}
\right..
\end{equation}
Then
\begin{align}
\sup_{G}|\nabla w| & \leq C\left( \|f\|_{L^\infty}^{1/2}\left(\|w\|_{L^\infty}^{1/2} +\|\va\|_{L^\infty(\p G)}^{1/2}\right)+ \|\varphi\|_{C^{1,\beta}(\partial G)}\right),\\
|\nabla w|_{0,\beta, G} & \leq C\left( \|f\|_{L^\infty}^{1/2+\beta/2}\left(\|w\|_{L^\infty}^{1/2-\beta/2}+\|\va\|_{L^\infty(\p G)}^{1/2-\beta/2}\right) + \|\varphi\|_{C^{1,\beta}(\partial G)}\right),
\end{align}
for a constant $C$ depending only on $G$. In addition, when $G=\Omega_\sigma$, where $\sigma_1\le \sigma\le \sigma_2$ and $\sigma_1$, $\sigma_2$ are fixed small numbers, we may take $C$ independent of $\sigma$. 
\end{lemma}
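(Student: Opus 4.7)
The plan is to lift the boundary data by harmonic extension and thereby reduce to the zero Dirichlet setting covered by \cite[Lemmas A1, A2]{bbh93}. Let $\psi$ denote the harmonic extension of $\varphi$ to $G$. Since $\p G\in C^{1,\beta}$, standard Schauder boundary regularity for the Dirichlet problem yields
\bes
\|\psi\|_{C^{1,\beta}(\overline G)} \le C\, \|\varphi\|_{C^{1,\beta}(\p G)},
\ees
with $C=C(G)$, while the maximum principle gives $\|\psi\|_{L^\infty(G)} \le \|\varphi\|_{L^\infty(\p G)}$.

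Set $v:=w-\psi$. Then $\Delta v=f$ in $G$, $v=0$ on $\p G$, and $\|v\|_{L^\infty}\le \|w\|_{L^\infty}+\|\varphi\|_{L^\infty(\p G)}$. Applying \cite[Lemmas A1, A2]{bbh93} (whose proofs, based on the classical interior gradient estimate for Poisson's equation combined with a barrier argument at the boundary, will be seen to adapt to the $C^{1,\beta}$ setting), we obtain
\bes
\sup_G |\na v| \le C \|v\|_\infty^{1/2}\|f\|_\infty^{1/2},
\qquad
|\na v|_{0,\beta,G}\le C \|v\|_\infty^{1/2-\beta/2}\|f\|_\infty^{1/2+\beta/2}.
\ees
Using the subadditivity $(a+b)^\gamma\le a^\gamma+b^\gamma$ for $a,b\ge 0$ and $\gamma\in[0,1]$ in order to split the $\|v\|_\infty^{1/2}$ and $\|v\|_\infty^{1/2-\beta/2}$ factors into a $\|w\|_\infty$-contribution plus a $\|\varphi\|_{L^\infty(\p G)}$-contribution, then applying the triangle inequalities $\sup|\na w|\le \sup|\na v|+\sup|\na\psi|$ and the analogous one for the $C^\beta$ seminorm, and finally absorbing $\|\na\psi\|_{C^\beta(\overline G)}\le C\|\varphi\|_{C^{1,\beta}(\p G)}$, one arrives at the announced estimates.

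For the uniform version with $G=\Omega_\sigma$ and $\sigma\in[\sigma_1,\sigma_2]$, it suffices to note that $\p\Omega_\sigma=\p\Omega\cup\bigcup_j C(a_j,\sigma)$ admits a $C^{1,\beta}$ parametrization whose norm depends continuously on $\sigma$ in the compact interval $[\sigma_1,\sigma_2]$. Consequently, both the Schauder estimate for $\psi$ and the BBH-type estimates for $v$ hold with a single constant $C$ valid throughout the range of $\sigma$. The main delicate point, and essentially the only one requiring care, is checking that \cite[Lemmas A1, A2]{bbh93}, originally formulated for smooth domains, extend to merely $C^{1,\beta}$ boundaries: a naive $C^{1,\beta}$ flattening would introduce unbounded second-order coefficients in the straightened equation, so the adaptation must instead rely on direct interior-plus-barrier arguments, which indeed go through for the Laplacian under $C^{1,\beta}$ regularity.
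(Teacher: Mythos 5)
Your decomposition $w=\psi+v$ with $\psi$ the harmonic extension of $\varphi$ and $v$ the zero-Dirichlet solution, followed by a Schauder bound for $\psi$, a BBH-type rescaled estimate for $v$, and a recombination via the subadditivity $\|v\|_\infty^\gamma\le\|w\|_\infty^\gamma+\|\varphi\|_{L^\infty(\partial G)}^\gamma$, is exactly the paper's plan.

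There is, however, a genuine gap. \cite[Lemma~A.2]{bbh93} contains only the gradient bound $\sup_G|\nabla v|\le C\,\|f\|_\infty^{1/2}\|v\|_\infty^{1/2}$; the H\"older seminorm estimate
$|\nabla v|_{0,\beta,G}\le C\,\|f\|_\infty^{1/2+\beta/2}\|v\|_\infty^{1/2-\beta/2}$,
which you present as also coming from \cite{bbh93}, is not there. That estimate is precisely the new content of the lemma, and you must supply its proof. The argument is an adaptation of the BBH rescaling: for $x_0\in G$ with $\dist(x_0,\partial G)>d$ and $0<\lambda\le d$, set $v_\lambda(y):=v(x_0+\lambda y)$, which satisfies $\Delta v_\lambda=\lambda^2 f(x_0+\lambda\,\cdot)$ in $B_1(0)$; the interior $C^{1,\beta}$ Schauder estimate then gives
$\lambda^{1+\beta}\,|\nabla v|_{0,\beta,B_{\lambda/2}(x_0)}\le C\bigl(\|v\|_\infty+\lambda^2\|f\|_\infty\bigr)$.
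Optimising over $\lambda$ (take $\lambda=(\|v\|_\infty/\|f\|_\infty)^{1/2}$ if this is $\le d$, else $\lambda=d$, using $\|v\|_\infty\le C\|f\|_\infty$ from \cite[Theorem~3.7]{gilbargtrudinger}) yields the interior part, and the boundary part is handled analogously via a boundary Schauder estimate such as \cite[Theorem~8.33]{gilbargtrudinger}, valid on $C^{1,\beta}$ domains. Incidentally, your worry about a $C^{1,\beta}$ boundary flattening introducing ``unbounded second-order coefficients'' is misplaced: a $C^{1,\beta}$ change of variables transforms the Laplacian into a divergence-form operator with $C^\beta$ coefficients, which is exactly the regularity required by \cite[Theorem~8.33]{gilbargtrudinger}; no barrier argument is needed.
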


\begin{proof}
We write $w=u+v$, where
\begin{equation}\label{equ}
\left\lbrace\begin{array}{rll}
\Delta u &= 0 & \text{in }G\\
u&=\varphi & \text{on }\partial G
\end{array},\right.
\end{equation}
and
\begin{equation}\label{eqv}
\left\lbrace\begin{array}{rll}
\Delta v &= f & \text{in }G\\
v&= 0 & \text{on }\partial G
\end{array}\right..
\end{equation}
By  standard elliptic estimates  \cite[Theorem~8.33]{gilbargtrudinger} we have
\begin{equation}\label{estimu}
\| u \|_{C^{1,\beta}}\leq c \|\varphi \|_{C^{1,\beta}}.
\end{equation}
Therefore we only need to prove that $v$ satisfies the estimates
\begin{align}
\label{estimv1}
\sup_{G}|\nabla v| & \leq C \|f\|_{L^\infty}^{1/2}\|v\|_{L^\infty}^{1/2},\\
\label{estimv2}
|\nabla v|_{0,\beta,G} & \leq C \|f\|_{L^\infty}^{1/2+\beta/2}\|v\|_{L^\infty}^{1/2-\beta/2}.
\end{align}
Estimate \eqref{estimv1} is proved in \cite[Lemma~A.2]{bbh93} by combining an interior estimate with a boundary estimate. Estimate \eqref{estimv2} can be obtained following exactly the same lines. In order to see this, we detail for example the proof of the interior estimate corresponding to \eqref{estimv2}. Proceeding as in \cite[Lemma~A.1]{bbh93}, we first show that
\begin{equation}\label{estimv3}
|\nabla v|_{0,\beta,G_d}\leq 
C \left( \|f\|_{L^\infty}^{1/2+\beta/2}\|v\|_{L^\infty}^{1/2-\beta/2}
+\frac{1}{d^{1+\beta}}\|v\|_{L^\infty}\right),
\end{equation}
where, for $d>0$, we let $G_d:=\lbrace x\in G; \, \mathrm{dist}(x,\partial G) > d \rbrace$. In order to prove \eqref{estimv3}, we let $x_0\in G_d$ and $\lambda\in (0,d]$,  and define
\begin{equation}\label{defulambda}
v_{\lambda}(y):=v(x_0+\lambda y),\quad y\in B_1(0).
\end{equation}
Then the function $v_\lambda$ satisfies the equation
\begin{equation}\label{equlambda}
\Delta v_\lambda = f_\lambda  \text{ in }B_1(0),\quad\text{with }f_\lambda(y):=\lambda^2 f(x_0+\lambda y).
\end{equation}
Standard elliptic estimates \cite[Theorem~8.33]{gilbargtrudinger} yield
\begin{equation}\label{estimv4}
\begin{split}
\lambda^{1+\beta}|\nabla v|_{0,\beta,B_{\lambda/2}(x_0)} &= |\nabla v_\lambda|_{0,\beta,B_{1/2}(0)}  
\leq C \left(\|v_\lambda\|_{L^\infty}+\|f_\lambda\|_{L^\infty}\right) \\
& \leq C\left( \|v\|_{L^\infty}+\lambda^2 \|f\|_{L^\infty} \right).
\end{split}
\end{equation}
We next discuss the two following cases.

\noindent
\emph{Case 1.} $\d 
\frac{\|v\|_{L^\infty}}{\|f\|_{L^\infty}}\leq d^2.
$\\
In this case,  we apply \eqref{estimv4} with $\lambda =(\|v\|_{L^\infty}/\|f\|_{L^\infty})^{1/2}$. We find that
\begin{equation}\label{estimcase1}
|\nabla v|_{0,\beta,B_{\lambda/2}(x_0)}\leq 2C \|v\|_{L^\infty}^{1/2-\beta/2}\|f\|_{L^\infty}^{1/2+\beta/2},
\end{equation}
so that \eqref{estimv3} is satisfied.

\noindent
\emph{Case 2.}
$\d
\frac{\|v\|_{L^\infty}}{\|f\|_{L^\infty}}> d^2.
$\\
In this case, we apply \eqref{estimv4} with $\lambda=d$. We obtain
\begin{equation}\label{estimcase2}
\begin{split}
|\nabla v|_{0,\beta,B_{\lambda/2}(x_0)} & \leq C \left( d^{-1-\beta}\|v\|_{L^\infty} + d^{1-\beta}\|f\|_{L^\infty} \right)\\
& \leq C \left(d^{-1-\beta}\|v\|_{L^\infty} + \|v\|_{L^\infty}^{1/2-\beta/2}\|f\|_{L^\infty}^{1/2+\beta/2}\right),
\end{split}
\end{equation}
so that in both cases \eqref{estimv3} is satisfied.

Once \eqref{estimv3} is established, we easily obtain the interior estimate corresponding to \eqref{estimv2}. Indeed, standard elliptic estimates \cite[Theorem~3.7]{gilbargtrudinger} imply
$\|v\|_{L^\infty}\leq C \|f\|_{L^\infty}$, so that from \eqref{estimv3} we obtain
\begin{equation}
\l{ee1}
|\nabla v|_{0,\beta,K}\leq C \|f\|_{L^\infty}^{1/2+\beta/2}\|v\|_{L^\infty}^{1/2-\beta/2},
\end{equation}
for every compact set $K\subset G$.

The proof of the boundary version of \eqref{ee1} is also a straightforward adaptation of the corresponding estimate established in   \cite[proof of Lemma A.2]{bbh93}, and we omit it here.
\end{proof}

\bibliographystyle{plain}
\bibliography{prescribed_degrees}

\end{document}